\newtheorem{theorem}{Theorem}[section]
\newtheorem{lemma}[theorem]{Lemma}
\newtheorem{proposition}[theorem]{Proposition}
\newtheorem{corollary}[theorem]{Corollary}
\newtheorem{predefinition}[theorem]{Definition}
\newenvironment{definition}{\begin{predefinition}\rm}{\end{predefinition}}
\newtheorem{preremark}[theorem]{Remark}
\newenvironment{remark}{\begin{preremark}\rm}{\end{preremark}}
\newtheorem{prenotation}[theorem]{Notation}
\newtheorem{example}[theorem]{Example}
\newtheorem{prequestion}[theorem]{Question}
\newcommand{\QQ}{\mathbb{Q}}
\newcommand{\NN}{\mathbb{N}}
\newcommand{\ZZ}{\mathbb{Z}}
\newcommand{\FF}{\mathbb{F}}
\newcommand{\NWN}{normalized Weil numbers }
\newcommand{\NWNs}{{\rm NWN}}
\title{Fully maximal and fully minimal abelian varieties}
\date{}
\author{Valentijn Karemaker}
\address{Valentijn Karemaker,  Department of Mathematics,
University of Pennsylvania,
Philadelphia, PA 19104, USA}
\email{vkarem@math.upenn.edu}
\author{Rachel Pries}
\address{Rachel Pries, Department of Mathematics, 
Colorado State University, 
Fort Collins, CO 80523, USA}
\email{pries@math.colostate.edu}
\thanks{Karemaker was partially supported by The Netherlands Organisation for Scientific Research (NWO) through the ``Geometry and Quantum Theory'' research cluster.
Pries was partially supported by NSF grant DMS-15-02227. 
The authors thank Jeff Achter, Gunther Cornelissen, 
Frans Oort, Christophe Ritzenthaler, Jeroen Sijsling, Andrew Sutherland, and a referee for helpful comments.}
\begin{document}

\begin{abstract}
We introduce and study a new way to catagorize supersingular abelian varieties defined over a finite field
by classifying them as {\it fully maximal}, {\it mixed} or {\it fully minimal}.
The type of $A$ depends on the normalized Weil numbers of $A$ and its twists.
We analyze these types for supersingular abelian varieties and curves under conditions on the automorphism group.  
In particular, we present a complete analysis of these properties for supersingular elliptic curves and
supersingular abelian surfaces in arbitrary characteristic,
and for a one-dimensional family  
of supersingular curves of genus $3$ in characteristic $2$.

AMS 2010 mathematics subject classification: primary: 11G10, 11G20, 11M38, 14H37, 14H45; secondary: 11G25, 14G15, 14H40, 14K10, 14K15.

%KEEP
%11G10 Abelian varieties of dim >1 
%11G20 Curves over finite fields
%11M38 Zeta and L-functions in char p
%14H37 Automorphisms
%14H45 Special curves and curves of low genus

%11G25 varieties over finite fields
%14G15 Algebraic geometry over finite ground fields
%14H40 Jacobians, Pryms
%14K10 ab var alg moduli classification
%14K15 Ab var over arith ground fields

Keywords: abelian variety, curve, supersingular, twist, automorphism, Frobenius, maximal, minimal, zeta function, Weil number, L-polynomial 
\end{abstract}

\maketitle

\section{Introduction}

Suppose that $X$ is a smooth projective connected curve of genus $g$ defined over a finite field $\FF_q$ of characteristic $p$; write $q=p^r$.
The curve $X$ is {\it supersingular} if the only slope of the Newton polygon of its $L$-polynomial 
is $\frac{1}{2}$ or, 
equivalently, if its normalized Weil numbers are all roots of unity.
If $p=2$, there exists a supersingular curve over $\FF_2$ of every genus \cite{VdGVdV}.
If $p$ is odd, it is not known whether there exists a supersingular curve over $\overline{\FF}_p$ of every genus.
One says that $X$ is \emph{minimal} (resp.\ \emph{maximal}) over $\FF_{q^m}$ if the number 
of $\FF_{q^m}$-points of $X$ realizes the lower (resp.\ upper) bound in the Hasse-Weil theorem.
%In fact, $X$ is supersingular if and only if it is minimal over some finite extension of $\FF_{q}$. 

More generally, 
suppose that $A$ is a principally polarized abelian variety of dimension $g$ defined over $\FF_q$.
Then $A$ is {\it supersingular} if the only slope of its $p$-divisible group $A[p^\infty]$ is $\frac{1}{2}$ or, equivalently, if its normalized Weil numbers are all roots of unity.
One says that $A$ is \emph{minimal} (resp.\ \emph{maximal}) over $\FF_{q^m}$
if Frobenius acts on its $\ell$-adic Tate module by multiplication by 
$\sqrt{q^m}$ (resp.\ -$\sqrt{q^m}$). 
In fact, $A$ (resp.\ $X$) is supersingular if and only if it is minimal over some finite extension of $\FF_q$.

Because of applications to cryptosystems and error-correcting codes, 
there are many papers in the literature about maximal curves but relatively few papers about minimal curves.  
This led to the motivating question: is a supersingular curve
$X/\FF_q$ more likely to be maximal or minimal?
However, this question is not well-posed, since $X$ may be neither until after a finite field extension.
To resolve this, one says that $X/\FF_q$ has parity $1$ if it is maximal after a finite extension of $\FF_q$, and parity $-1$ otherwise, cf.\
Definition \ref{periodinWeilnums}.
The proportion of supersingular elliptic curves with parity $1$ can be determined
using \cite{schoof} (Remark~\ref{Ranalytic}),
but the analogous question for curves of higher genus and abelian varieties of higher dimension is more difficult to answer, since the sizes of the isogeny classes are not known.

In this paper, we address a related question about supersingular curves and abelian varieties, 
based on the fact that most of the supersingular curves found in the literature 
have non-trivial automorphism groups and twists.
The twists of $X/\FF_q$ may have different arithmetic properties.
Specifically, it is possible that $X/\FF_q$ is not maximal over any extension of $\FF_{q}$ but that it has a twist which is
maximal over some extension of $\FF_q$.  From a geometric perspective, 
there is no reason to prefer one twist over another.

The following definition addresses this subtlety.
Suppose that $X/\FF_q$ is a supersingular curve or abelian variety.
We define $X$ to be (i) \emph{fully maximal}, (ii) \emph{fully minimal}, (iii) \emph{mixed} over $\FF_{q}$ 
if (i) all, (ii) none, or (iii) some (but not all) of its $\FF_q$-twists have the property that they are maximal over 
some finite extension of $\FF_q$ 
(Definitions~\ref{type}, \ref{Jactype}).
The type of $X$ depends on its geometric automorphism group, its field of definition, and the \NWN of its twists,
leading to a fascinating interaction between algebra, geometry, and arithmetic.

It is a natural question to ask: under what conditions is a supersingular curve or abelian variety
fully maximal, fully minimal, or mixed over $\FF_q$?
We answer this question for dimension $g=1$ in Section \ref{sec:low}, 
proving that a supersingular elliptic curve is fully maximal over $\FF_p$ if its $j$-invariant is in $\FF_p$ 
and is mixed over $\FF_{p^2}$ otherwise (Theorem \ref{Tconcg1}).
When $g=2$ and $p$ is odd, in Section \ref{Ssurface}, we
give a complete analysis of the three types for simple supersingular abelian surfaces $A$;
in particular, for $A/\FF_{p^r}$
with ${\rm Aut}_{\bar{\FF}_p}(A) \simeq \ZZ/2\ZZ$, then
$A$ is not mixed over $\FF_{p^r}$ if $r$ is odd and $A$ is not fully minimal over $\FF_{p^r}$ if $r$ is even (Proposition \ref{Pmain7}).

The results in Sections \ref{sec:low}-\ref{Ssurface} depend on theoretical results in 
earlier sections which hold for all $g$ and $p$.
Section \ref{minmax} introduces supersingular abelian varieties and curves.
Section \ref{S3} contains information about twists, 
including the bijection between twists of 
$A/\FF_q$ and $\FF_q$-Frobenius conjugacy classes of 
${\rm Aut}_{\bar{\FF}_p}(A)$ (Proposition \ref{bij1})
and the effect of 
twists on the relative Frobenius endomorphism (Proposition \ref{frobtwist}).

In Section \ref{S4}, we study supersingular abelian varieties of arbitrary dimension $g$. 
We characterize the fully maximal, fully minimal, and mixed types 
in terms of arithmetic properties of the normalized Weil numbers of $A/\FF_q$. 
These are roots of unity; the key ingredient for the analysis is the $2$-divisibility of their orders, 
encoded in a multiset $\underline{e}(A/\FF_q)$ (Definition~\ref{Des}).
As an application,
we show that $A$ is not fully minimal over $\FF_{p^r}$ if $A$ is simple and $r$ is even (Proposition \ref{Prevennotfmin}). 
We give a complete characterization of the three types
under the hypothesis that $\vert{\rm Aut}_{\bar{\FF}_p}(A)\vert=2$ (Corollary \ref{typetoei2}), and a criterion for the mixed case
in terms of the orders of the twists and $\underline{e}(A/\FF_q)$ (Corollary \ref{Ctwistodd}).
%KEEP We note that it is not known in general whether $\vert{\rm Aut}_{\bar{\FF}_p}(A)\vert=2$ for a generic supersingular abelian variety $A$ over a finite field (Remark \ref{Ronemoduli}).

In Section \ref{S5}, we define the three types
for a supersingular curve $X$.
If $s \equiv 0 \bmod 4$ and
$p \equiv -1 \bmod s$, we prove that
the smooth plane curve $X/\FF_p$ 
with equation $x^s+y^s+z^s=0$ is supersingular and of mixed type over $\FF_p$ (Proposition \ref{Pexamplemixed}).
In Section \ref{SsmallautX}, we study which automorphisms yield parity-changing twists.

Most of the supersingular curves found in the literature are constructed using Artin-Schreier 
theory.  In many cases, the automorphism groups and \NWN of these Artin-Schreier curves
are known, e.g., in \cite{VdGVdV92} and \cite{Bwin}.
An open problem is to determine when these curves are fully maximal, fully minimal, or mixed.
As a result in this direction, we end the paper in Section \ref{SVR}
by studying a one-dimensional
family of supersingular curves $X$ of genus $3$ in characteristic $2$, which 
are $(\ZZ/2\ZZ \times \ZZ/2\ZZ)$-Galois covers of the projective line.
For $X/\FF_{2^r}$, we prove that
$X$ is fully minimal if $r \equiv 0 \bmod 4$, 
$X$ is fully minimal or mixed (with about equal probability) if $r \equiv 2 \bmod 4$,
and $X$ is fully maximal or mixed (with about equal probability) if $r$ is odd 
(Theorem \ref{mainthm}).

\section{Background: supersingular abelian varieties and Weil numbers}\label{minmax}

Let $k = \overline{\FF}_p$.  
Let $A$ be an abelian variety of dimension $g$, 
a priori defined over $k$. 
Throughout the paper, we assume $A$ is defined over a finite field $K = \FF_q$ of cardinality $q = p^r$. 

We write $K$ instead of $\mathrm{Spec}(K)$ when this causes no ambiguity.

\subsection{Frobenius and its characteristic polynomial}\mbox{}

\begin{definition}\label{frobs} \cite[21.2]{oortNP}
Consider the generator $Fr_K: \alpha \to \alpha^q$ of the absolute Galois group 
$G_K = \mathrm{Gal}(k/K)$ of $K$.
If $R$ is a $K$-algebra and $U = \mathrm{Spec}(R)$, then the map which sends $x \mapsto x^q$ for $x \in R$ induces a Frobenius map $f_U$ on $U$. 
The \emph{absolute Frobenius endomorphism} 
$f_A \colon A \to A$
of $A/K$ is the glueing of 
$f_U$ over all open affine subschemes $U$ of $A$. 

For a morphism of $K$-schemes $A \to S$, let $A^{(p)}$ be the fiber 
product of $A \to S \stackrel{f_S}{\leftarrow} S$.
The morphism $f_A$ factors through $A^{(p)}$;
this defines a morphism $\pi = \pi_A \colon A  \to A^{(p)}$ 
called the \emph{relative Frobenius endomorphism}. 
Then \begin{equation}\label{eq:frobs}
\pi_A = f_A \otimes Fr_K^{-1}.
\end{equation}
\end{definition}

By \cite[page 135-138]{tate}, for any $\ell \neq p$, there is a bijection
\begin{equation}\label{tate}
\mathrm{End}_{K}(A) \otimes \QQ_\ell\to \mathrm{End}_{G_{K}}(T_{\ell}(A) \otimes_{\ZZ_{\ell}} \QQ_{\ell}),
\end{equation}
where $T_{\ell}(A)$ denotes the $\ell$-adic Tate module of $A$.
Via this bijection,
$\pi_A$ can be viewed as a linear operator on $T_{\ell}(A) \otimes_{\ZZ_{\ell}} \QQ_{\ell}$. 
Since $\pi_A$ is semisimple (cf. \cite[page 138]{tate}), 
this linear operator is diagonalizable over $\overline{\QQ}_{\ell}$.
Moreover, the characteristic polynomial 
$P(A/K,T)$ of $\pi_A$ (in the sense of \cite[page 110]{Lang}) coincides with that of its 
corresponding linear operator, by e.g., \cite[Chapter VII, Theorem 3]{Lang}. 

\subsection{Weil numbers and zeta functions}\mbox{}\\

The characteristic polynomial $P(A/\FF_{q},T)$ of $\pi_A$ is a monic polynomial in $\ZZ[T]$ of degree $2g$.  
Writing $P(A/\FF_{q},T) = \prod_{i=1}^{2g}(T-\alpha_i)$, the roots $\alpha_i \in \overline{\QQ}$ all satisfy
$\vert \alpha_i \vert = \sqrt{q}$. 

\begin{definition}\label{NWNAV}
The roots $\{\alpha_1, \ldots, \alpha_{2g} \} = \{\alpha_1, \bar{\alpha}_1, \ldots, \alpha_{g}, \bar{\alpha}_g\}$ of $P(A/\FF_{q},T)$ are the \emph{Weil numbers} of $A$.
The \emph{normalized Weil numbers} of $A/\FF_q$ are $\NWNs(A/\FF_q)=\{z_1, \bar{z}_1, \ldots, z_{g}, \bar{z}_g\}$, where $z_i = \frac{\alpha_i}{\sqrt{q}}$.
\end{definition}

In writing the normalized Weil numbers, we use the convention that $\zeta_n = e^{2\pi i/n}$.

\begin{theorem}\label{zetaAV} \cite[Chapter II, Section 1]{milneAV}, \cite[Theorem 1.6]{deligne}, \cite[\S IX, 71]{weil2}
The zeta function of $A$ over $\FF_{q}$ satisfies
\[
Z(A/\FF_{q},T) := \exp\left(\sum_{m\geq 1}\vert A(\FF_{q^m})\vert\frac{T^m}{m}\right) = \frac{P_1(T)\cdot\ldots\cdot P_{2g-1}(T)}{P_0(T)P_2(T)\cdot\ldots\cdot P_{2g-2}(T)P_{2g}(T)},
\]
where
$P_s(T) \in \ZZ[T]$ and $P_s(T) = \prod_{\sigma \in S_s}(1-\alpha_{\sigma}T)$
where $S_s$ is the set of subsets $\sigma=\{i_1, \ldots, i_s\}$ of $\{1, \ldots, 2g\}$ of cardinality $s$ 
and $\alpha_\sigma =  \alpha_{i_1}\alpha_{i_2}\cdot\ldots\cdot \alpha_{i_s}$.
\end{theorem}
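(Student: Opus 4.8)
The plan is to derive the product formula from the Grothendieck--Lefschetz trace formula in $\ell$-adic cohomology, combined with the special structure of the cohomology of an abelian variety and a formal power series manipulation. Fix $\ell \neq p$ and write $H^s := H^s_{\text{ét}}(A_{\overline{\FF}_q}, \QQ_\ell)$. The first input I would record is the trace formula, which for every $m \geq 1$ expresses the number of fixed points of the $m$-th power of (geometric) Frobenius as an alternating sum of traces:
\[
\vert A(\FF_{q^m})\vert = \sum_{s=0}^{2g} (-1)^s \operatorname{Tr}\left(\pi_A^m \mid H^s\right).
\]
The second input, specific to abelian varieties, is that cup product induces an isomorphism $H^s \cong \bigwedge^s H^1$ for each $s$, where $H^1$ is $2g$-dimensional and the eigenvalues of $\pi_A$ on $H^1$ are exactly the Weil numbers $\alpha_1, \ldots, \alpha_{2g}$. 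Here I would note the harmless identification $\alpha \leftrightarrow q/\alpha = \bar{\alpha}$ arising from the duality between $H^1$ and the Tate module $T_\ell(A)$: since the Weil numbers come in conjugate pairs $\alpha, \bar{\alpha}$ with $\alpha \bar\alpha = q$, the multiset of eigenvalues on $H^1$ agrees with $\{\alpha_i\}$ regardless of the precise normalization.

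Next I would compute the traces. Because $H^s \cong \bigwedge^s H^1$, the eigenvalues of $\pi_A$ on $H^s$ are precisely the products $\alpha_\sigma = \alpha_{i_1}\cdots\alpha_{i_s}$ indexed by the $s$-element subsets $\sigma \in S_s$, so that $\operatorname{Tr}(\pi_A^m \mid H^s) = \sum_{\sigma \in S_s} \alpha_\sigma^m$. Substituting into the defining exponential sum and exchanging the order of summation gives
\[
Z(A/\FF_q, T) = \exp\left(\sum_{s=0}^{2g}(-1)^s \sum_{\sigma \in S_s} \sum_{m \geq 1} \frac{(\alpha_\sigma T)^m}{m}\right).
\]
The innermost sum is the Taylor expansion of $-\log(1 - \alpha_\sigma T)$, so applying $\exp$ converts the sums into a product:
\[
Z(A/\FF_q, T) = \prod_{s=0}^{2g} \prod_{\sigma \in S_s} (1 - \alpha_\sigma T)^{(-1)^{s+1}} = \prod_{s=0}^{2g} P_s(T)^{(-1)^{s+1}}.
\]
Since $(-1)^{s+1} = +1$ exactly when $s$ is odd, the factors $P_s$ with $s$ odd appear in the numerator and those with $s$ even in the denominator, which is the asserted expression.

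Finally I would verify $P_s(T) \in \ZZ[T]$. The full multiset $\{\alpha_1, \ldots, \alpha_{2g}\}$ is stable under $\operatorname{Gal}(\overline{\QQ}/\QQ)$, being the roots of the monic integer polynomial $P(A/\FF_q, T)$, and $S_s$ consists of all $s$-element subsets; hence $\{\alpha_\sigma : \sigma \in S_s\}$ is itself a Galois-stable multiset of algebraic integers. Its elementary symmetric functions, which are the coefficients of $P_s(T)$, are therefore rational integers. The main obstacle is not this computation, which is routine, but the two deep inputs on which it rests: the Grothendieck--Lefschetz trace formula and the identification $H^s \cong \bigwedge^s H^1$. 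These are exactly the results provided by the cited references \cite{milneAV, deligne, weil2}, so in the write-up I would invoke them directly rather than reprove them.
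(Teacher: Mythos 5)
Your argument is correct and is essentially the standard derivation found in the references the paper cites for this theorem (the paper itself gives no proof, only the citations to Milne, Deligne, and Weil): Grothendieck--Lefschetz plus $H^s \cong \bigwedge^s H^1$, followed by the usual $\exp$--$\log$ manipulation and a Galois-stability argument for integrality. Your remark that the duality between $H^1$ and $T_\ell(A)$ only permutes the Weil numbers via $\alpha \mapsto q/\alpha = \bar{\alpha}$, so the eigenvalue multiset is unaffected, correctly disposes of the one normalization subtlety.
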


Note that $P(A/\FF_{q},T) = T^{2g}P_1(T^{-1})$. The polynomials $P_i(T)$ describe the action of Frobenius on the $i$-th {\'e}tale cohomology of $A/\FF_q$. By \cite[Theorem 1]{tate}, two abelian varieties $A_1$ and $A_2$ over $\FF_q$ have the same zeta function
if and only if $P(A_1/\FF_q, T)=P(A_2/\FF_q, T)$, which holds if and only if $A_1$ and $A_2$ are isogenous over $\FF_{q}$.

\begin{corollary}\label{pointsAV} \cite[Chapter II, Theorem 1.1]{milneAV}
The number of $\FF_q$-points of $A$ satisfies
\[
\vert A(\FF_{q})\vert = \mathrm{deg}(\pi_{A/\FF_{q}} - \mathrm{id}) = P(A/\FF_{q},1) = \prod_{i=1}^{2g} (1-\alpha_i); \ {\rm and \ thus}
\]
\[
\vert \vert A(\FF_{q})\vert - q^{g} \vert \leq 2gq^{(g-\frac{1}{2})}+(2^{2g}-2g-1)q^{(g-1)}.
\]
\end{corollary}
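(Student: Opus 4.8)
The plan is to establish the two displayed assertions in turn, the first being a standard identification of the point count with a degree and the second an elementary estimate built on the Weil bound $\vert\alpha_i\vert = \sqrt q$.

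For the first chain of equalities, I would begin by identifying the $\FF_q$-rational points of $A$ with the fixed points of the Frobenius, i.e. $A(\FF_q) = \ker(\pi_{A/\FF_q} - \mathrm{id})$. Since the differential of $\pi_{A/\FF_q}$ vanishes in characteristic $p$, the differential of $\pi_{A/\FF_q} - \mathrm{id}$ equals $-\mathrm{id}$ and is therefore invertible; hence $\pi_{A/\FF_q} - \mathrm{id}$ is separable (étale) and $\vert A(\FF_q)\vert = \vert\ker(\pi_{A/\FF_q} - \mathrm{id})\vert = \deg(\pi_{A/\FF_q} - \mathrm{id})$. Via the bijection \eqref{tate}, the degree of an isogeny equals the determinant of its action on $T_\ell(A)\otimes_{\ZZ_\ell}\QQ_\ell$. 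Since $\pi_{A/\FF_q}$ has eigenvalues $\alpha_1, \ldots, \alpha_{2g}$, the operator $\pi_{A/\FF_q} - \mathrm{id}$ has eigenvalues $\alpha_i - 1$, and because $2g$ is even,
\[
\deg(\pi_{A/\FF_q} - \mathrm{id}) = \prod_{i=1}^{2g}(\alpha_i - 1) = \prod_{i=1}^{2g}(1 - \alpha_i) = P(A/\FF_q, 1),
\]
the last equality being the definition of $P(A/\FF_q, T)$ evaluated at $T = 1$. This gives the first line.

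For the bound, I would expand the product into elementary symmetric functions of the Weil numbers, writing $e_s = e_s(\alpha_1, \ldots, \alpha_{2g})$, so that
\[
\vert A(\FF_q)\vert = \prod_{i=1}^{2g}(1 - \alpha_i) = \sum_{s=0}^{2g}(-1)^s e_s.
\]
The top term is $e_{2g} = \prod_{i=1}^{2g}\alpha_i = \prod_{i=1}^{g}\alpha_i\bar\alpha_i = q^g$, which is the main term; subtracting it leaves $\vert A(\FF_q)\vert - q^g = \sum_{s=0}^{2g-1}(-1)^s e_s$. Since each $\alpha_i$ satisfies $\vert\alpha_i\vert = \sqrt q$, every product $\alpha_{i_1}\cdots\alpha_{i_s}$ of $s$ distinct roots has absolute value $q^{s/2}$, and there are $\binom{2g}{s}$ such products, so the triangle inequality yields $\vert e_s\vert \leq \binom{2g}{s} q^{s/2}$.

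It then remains to sum these contributions. Isolating $s = 2g - 1$ gives the leading term $\binom{2g}{2g-1}q^{(2g-1)/2} = 2g\, q^{g - 1/2}$. For the remaining indices $s \leq 2g-2$ I would use the crude bound $q^{s/2} \leq q^{g-1}$ together with $\sum_{s=0}^{2g-2}\binom{2g}{s} = 2^{2g} - \binom{2g}{2g-1} - \binom{2g}{2g} = 2^{2g} - 2g - 1$, giving $\sum_{s=0}^{2g-2}\binom{2g}{s}q^{s/2} \leq (2^{2g} - 2g - 1)q^{g-1}$. Adding the two pieces produces exactly the stated inequality. I do not expect a genuine obstacle here: the geometric identifications in the first step are standard (indeed the result is attributed to \cite[Chapter II, Theorem 1.1]{milneAV}), and the bound is a routine application of the triangle inequality and a binomial identity; the only points requiring care are the separability of $\pi_{A/\FF_q} - \mathrm{id}$ and the bookkeeping that isolates the $s = 2g-1$ term from the rest of the binomial sum.
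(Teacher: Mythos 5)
Your proof is correct and follows essentially the standard argument from the cited reference \cite[Chapter II, Theorem 1.1]{milneAV}; the paper itself gives no proof beyond that citation. Both the separability argument identifying $\vert A(\FF_q)\vert$ with $\deg(\pi_{A/\FF_q}-\mathrm{id})=P(A/\FF_q,1)$ and the elementary symmetric function estimate (isolating the $s=2g-1$ term and bounding the rest by $(2^{2g}-2g-1)q^{g-1}$) are exactly the intended route, and your bookkeeping checks out.
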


\subsection{Zeta functions of curves} \mbox{}\\

Let $X$ be a smooth projective connected curve of genus $g$ defined over $\FF_q$. 

\begin{theorem}\cite[\S IV, 22]{weil},\cite[\S IX, 69]{weil2}\label{zetacurve} 
The zeta function of $X/\FF_q$ can be written as
\[
Z(X/\FF_q,T) = \frac{L(X/\FF_{q},T)}{(1-T)(1-qT)}
\]
where the \emph{$L$-polynomial} $L(X/\FF_{q},T) \in \ZZ[T]$ of $X/\FF_q$ has degree $2g$ and 
factors as
\[
L(X/\FF_{q},T) = \prod_{i=1}^{2g}(1-\alpha_iT).
\]
\end{theorem}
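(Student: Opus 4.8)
The plan is to compute $Z(X/\FF_q,T)$ directly from its defining power series. The essential input is an exact formula for the point counts $\vert X(\FF_{q^m})\vert$ in terms of Frobenius eigenvalues, after which the zeta function drops out of a formal manipulation of the exponential generating function. Throughout I would work with the Jacobian $J=\Jac(X)$, an abelian variety of dimension $g$ over $\FF_q$, in order to import the structural results on $P(A/\FF_q,T)$ recalled in Section~\ref{minmax}.

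First I would let $\alpha_1,\ldots,\alpha_{2g}$ be the roots of the characteristic polynomial $P(J/\FF_q,T)=\prod_{i=1}^{2g}(T-\alpha_i)$ of the relative Frobenius $\pi_J$; these are the Weil numbers of $J$, so by the abelian-variety case they are algebraic integers with $\vert\alpha_i\vert=\sqrt q$, and $P(J/\FF_q,T)\in\ZZ[T]$ is monic of degree $2g$. I would then define $L(X/\FF_q,T):=T^{2g}P(J/\FF_q,1/T)=\prod_{i=1}^{2g}(1-\alpha_i T)$, so that integrality, degree $2g$, and the asserted factorization all come for free from the abelian-variety setup (indeed $L(X/\FF_q,T)$ is the polynomial $P_1(T)$ of Theorem~\ref{zetaAV} applied to $J$).

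The key step is the point-counting formula
\[
\vert X(\FF_{q^m})\vert = q^m + 1 - \sum_{i=1}^{2g}\alpha_i^m \qquad \text{for all } m\geq 1.
\]
I would deduce this from the Grothendieck--Lefschetz trace formula applied to the $m$-th power of geometric Frobenius $F$ acting on the $\ell$-adic cohomology $H^j(X_k,\QQ_\ell)$, where $k=\overline{\FF}_p$ and $\ell\neq p$. Here $H^0$ and $H^2$ are one-dimensional with $F$ acting by $1$ and by $q$ respectively, while $H^1(X_k,\QQ_\ell)$ has dimension $2g$ and the characteristic polynomial of $F$ on it coincides with that of $\pi_J$ on the rational Tate module $V_\ell(J)$, namely $P(J/\FF_q,T)$; thus its eigenvalues are exactly the $\alpha_i$. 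Taking the alternating sum of traces, $\vert X(\FF_{q^m})\vert=\sum_{j=0}^2(-1)^j\operatorname{Tr}(F^m\mid H^j)$, yields the displayed identity. (Alternatively, following Weil, one can avoid cohomology and instead use Riemann--Roch to prove rationality of $Z$ and establish the functional equation, then match the numerator; this is more elementary but considerably longer.)

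Finally I would substitute this formula into the definition of $Z(X/\FF_q,T)$ and use the identity $\exp\bigl(\sum_{m\geq 1}x^m/m\bigr)=(1-x)^{-1}$. Splitting the sum into its three contributions produces the factors $(1-qT)^{-1}$, $(1-T)^{-1}$, and $\prod_{i=1}^{2g}(1-\alpha_i T)$, which assemble into
\[
Z(X/\FF_q,T) = \frac{\prod_{i=1}^{2g}(1-\alpha_i T)}{(1-T)(1-qT)} = \frac{L(X/\FF_q,T)}{(1-T)(1-qT)},
\]
as claimed. The main obstacle is the point-counting formula: both the Lefschetz trace formula and the identification of the $H^1$-eigenvalues with the Weil numbers of $J$ (in particular the fact that $\dim H^1=2g$, which is precisely where the genus enters) rest on nontrivial input from \'etale cohomology, or in Weil's approach on a delicate count of effective divisors via Riemann--Roch.
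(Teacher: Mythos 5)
This theorem is quoted from Weil and the paper supplies no proof of its own, so there is nothing internal to compare against; your sketch is a correct proof outline, but it follows the modern cohomological route rather than the one in the cited sources. Weil's original argument (the one the references \cite{weil}, \cite{weil2} point to) predates \'etale cohomology: rationality of $Z(X/\FF_q,T)$ with numerator of degree $2g$ is extracted from a Riemann--Roch count of effective divisors of each degree, the functional equation comes from duality for divisor classes, and the bound $\vert\alpha_i\vert=\sqrt q$ comes from positivity of the trace form on correspondences on $X\times X$. Your approach instead takes the Weil conjectures for abelian varieties (already recalled in Section~\ref{minmax} of the paper) as input, defines $L(X/\FF_q,T):=T^{2g}P(\Jac(X)/\FF_q,1/T)$, and uses the Grothendieck--Lefschetz trace formula to get $\vert X(\FF_{q^m})\vert=q^m+1-\sum_i\alpha_i^m$, after which the formal manipulation with $\exp(\sum_m x^m/m)=(1-x)^{-1}$ is routine. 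That buys brevity and transparency at the cost of substantial external machinery, which you correctly flag as the real content. One small point worth making explicit: the comparison $H^1_{\text{\'et}}(X_k,\QQ_\ell)\simeq V_\ell(\Jac(X))^\vee$ means the geometric Frobenius eigenvalues on $H^1$ are a priori $q/\alpha_i$ rather than $\alpha_i$; since the multiset $\{\alpha_i\}$ is stable under $\alpha\mapsto q/\alpha=\bar\alpha$, this does not change the conclusion, but the identification is not quite as immediate as ``the characteristic polynomials coincide.''
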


Then $P(\mathrm{Jac}(X)/\FF_{q},T) = T^{2g}L(X/\FF_{q},T^{-1})$ is the characteristic polynomial of $\pi_{\rm{Jac}(X)}$.
The (normalized) \emph{Weil numbers} of $X$ are 
the (normalized) roots of $P(\mathrm{Jac}(X)/\FF_{q},T)$. 

\begin{corollary}\label{pointscurve}
Let $\{\alpha_1, \bar{\alpha}_1, \ldots, \alpha_{g}, \bar{\alpha}_{g}\}$ be the Weil numbers of $X$.
The number of $\FF_q$-points of $X$ satisfies
$\vert X(\FF_{q}) \vert = q+1-\sum_{i=1}^{g}(\alpha_i + \bar{\alpha}_i)$, 
which implies the Hasse-Weil bound:
\[
\vert \vert X(\FF_{q}) \vert - (q+1)\vert \leq 2g\sqrt{q}.
\]
\end{corollary}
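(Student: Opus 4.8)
The plan is to recover the point count $|X(\FF_q)|$ from the coefficients of the zeta function by comparing two expressions for $\log Z(X/\FF_q, T)$. First I would use the defining power series
\[
\log Z(X/\FF_q, T) = \sum_{m \geq 1} \vert X(\FF_{q^m})\vert \frac{T^m}{m},
\]
which is how the zeta function appearing in Theorem \ref{zetacurve} is constructed, exactly as in the abelian-variety case of Theorem \ref{zetaAV}. On the other hand, taking the logarithm of the rational expression $Z(X/\FF_q, T) = L(X/\FF_q, T)/((1-T)(1-qT))$ and expanding each factor via $-\log(1-xT) = \sum_{m \geq 1} x^m T^m/m$ gives
\[
\log Z(X/\FF_q, T) = \sum_{m \geq 1} \left(1 + q^m - \sum_{i=1}^{2g} \alpha_i^m\right) \frac{T^m}{m},
\]
where I use the factorization $L(X/\FF_q, T) = \prod_{i=1}^{2g}(1-\alpha_i T)$ from Theorem \ref{zetacurve}.

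Comparing the coefficient of $T^m/m$ in these two series yields the general identity $\vert X(\FF_{q^m})\vert = q^m + 1 - \sum_{i=1}^{2g} \alpha_i^m$. Specializing to $m=1$ and using that the Weil numbers occur in complex-conjugate pairs, so that $\sum_{i=1}^{2g} \alpha_i = \sum_{i=1}^{g}(\alpha_i + \bar{\alpha}_i)$, produces the claimed formula $\vert X(\FF_q)\vert = q + 1 - \sum_{i=1}^{g}(\alpha_i + \bar{\alpha}_i)$.

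Finally, for the Hasse-Weil bound I would invoke the absolute value property $\vert \alpha_i\vert = \sqrt{q}$: the Weil numbers of $X$ are by definition the roots of $P(\mathrm{Jac}(X)/\FF_q, T)$, so this is the bound recorded just before Definition \ref{NWNAV} applied to $\mathrm{Jac}(X)$. Writing $\alpha_i + \bar{\alpha}_i = 2\,\mathrm{Re}(\alpha_i)$ gives $\vert \alpha_i + \bar{\alpha}_i\vert \leq 2\vert \alpha_i\vert = 2\sqrt{q}$, and summing over the $g$ conjugate pairs with the triangle inequality yields $\vert \sum_{i=1}^{g}(\alpha_i + \bar{\alpha}_i)\vert \leq 2g\sqrt{q}$, hence $\vert\,\vert X(\FF_q)\vert - (q+1)\vert \leq 2g\sqrt{q}$. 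There is no genuine obstacle here: the entire content sits in the coefficient comparison of the first step, and the bound is an elementary estimate once $\vert \alpha_i\vert = \sqrt{q}$ is in hand. The only point requiring a word of care is the use of the exponential definition of the curve's zeta function, which is implicit in Theorem \ref{zetacurve} and standard from Weil's work cited there.
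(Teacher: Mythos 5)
Your proof is correct and is precisely the standard derivation that the paper leaves implicit (the corollary is stated without proof as an immediate consequence of Theorem \ref{zetacurve}): taking logarithms of the two expressions for $Z(X/\FF_q,T)$, comparing coefficients to get $\vert X(\FF_{q^m})\vert = q^m+1-\sum_i \alpha_i^m$, and then applying $\vert\alpha_i\vert=\sqrt{q}$ with the triangle inequality. No gaps.
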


\subsection{Supersingular abelian varieties and curves}\mbox{}

\begin{definition}\label{ssAV}
An abelian variety $A$ is \emph{supersingular} if the only slope of the $p$-divisible group $A[p^\infty]$ is $\frac{1}{2}$.
A curve $X$ is \emph{supersingular} if its Jacobian ${\rm Jac}(X)$ is supersingular. 
\end{definition}

\begin{theorem}\label{propssAV} 
Suppose that $A/\FF_q$ is an abelian variety of dimension $g$.
The following properties are each equivalent to $A$ being supersingular:
\begin{enumerate}
\item the ($q$-normalized) Newton polygon of $P(A/\FF_{q},T)$ 
is a line segment of slope $\frac{1}{2}$;
\item $A$ is geometrically isogenous to a product of supersingular 
elliptic curves, i.e., 
\newline $A \times_{\FF_{q}} k \sim E^g \times_{\FF_{q}} k$ for an elliptic curve $E$ such that $E[p](k) = \{0\}$, \cite[Theorem~4.2]{oortsub};
\item the formal group of $A$ is geometrically isogenous to $(G_{1,1})^g$, \cite[Section 1.4]{lioort};
\item the normalized Weil numbers of $A/\FF_q$ are roots of unity, \cite[Theorem 4.1]{manin2}.
\end{enumerate}
\end{theorem}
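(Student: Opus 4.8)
The plan is to prove that each of (1)--(4) is equivalent to the defining condition that the $p$-divisible group $A[p^\infty]$ has $\frac{1}{2}$ as its only slope. The organizing principle is the dictionary between the arithmetic data carried by the characteristic polynomial $P(A/\FF_q,T)$ and the $p$-adic data carried by $A[p^\infty]$. First I would recall the fundamental comparison, due to Manin, that the $q$-normalized Newton polygon of $P(A/\FF_q,T)$ equals the Newton polygon of $A[p^\infty]$; here the $q$-normalization records, at each root $\alpha_i$, the ratio $\mathrm{ord}_v(\alpha_i)/\mathrm{ord}_v(q)$ for the places $v\mid p$. With this in hand, the equivalence of the definition with (1) is immediate: the slopes of $A[p^\infty]$ all equal $\frac{1}{2}$ precisely when this Newton polygon is the line segment from $(0,0)$ to $(2g,g)$, that is, a segment of slope $\frac{1}{2}$.

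Next I would treat the equivalence of (1) and (4), which is the arithmetic heart of the statement and the one place where I would give a self-contained argument rather than merely invoke a reference. Each Weil number satisfies $\vert\alpha_i\vert=\sqrt q$ at every archimedean place and, because $\alpha_i\bar\alpha_i=q=p^r$, is a unit at every finite place $v\nmid p$. Condition (1) says exactly that $\mathrm{ord}_v(\alpha_i)=\tfrac{1}{2}\mathrm{ord}_v(q)$ for all $v\mid p$, so that the normalized Weil number $z_i=\alpha_i/\sqrt q$ is a unit at every finite place and has absolute value $1$ at every archimedean place. Thus $z_i^2=\alpha_i^2/q$ is an algebraic integer all of whose conjugates lie on the unit circle, and Kronecker's theorem forces it to be a root of unity; since a square root of a root of unity is again a root of unity, each $z_i$ is a root of unity, giving (4). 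Conversely, if each $z_i$ is a root of unity then it is a $p$-adic unit, which returns the slope-$\frac{1}{2}$ condition, recovering (1). This is Manin's theorem \cite[Theorem 4.1]{manin2}.

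For the remaining two conditions I would pass through the classification of $p$-divisible groups. By the Dieudonn\'e--Manin classification, a $p$-divisible group over $k$ of height $2g$ and dimension $g$ whose unique slope is $\frac{1}{2}$ is isogenous to $(G_{1,1})^g$, and conversely; applied to $A[p^\infty]$ after base change to $k$, this yields the equivalence of the definition with (3), following \cite[Section 1.4]{lioort}. Finally, to obtain (2) I would invoke Oort's theorem \cite[Theorem~4.2]{oortsub}: an abelian variety over $k$ whose formal group is isogenous to $(G_{1,1})^g$ is isogenous over $k$ to $E^g$ for a supersingular elliptic curve $E$. Since $G_{1,1}$ is by definition the formal group of such an $E$, conditions (2) and (3) match up, closing the circle of equivalences.

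The genuinely deep inputs are black-boxed by the cited references: the Manin comparison between the two Newton polygons, and Oort's descent of the slope-$\frac{1}{2}$ condition to an honest product of supersingular elliptic curves over $k$. The only step I expect to require care to write out cleanly is the equivalence of (1) and (4), where one must track the valuations at all places simultaneously and be precise that passing to $z_i^2$ (to secure an algebraic integer) before applying Kronecker, and then extracting square roots, loses no information. Everything else is a matter of assembling the standard dictionary in the correct order.
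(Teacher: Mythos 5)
Your proposal is correct and matches the paper's treatment: the paper states this as a background theorem with no written proof, relying on exactly the references you black-box (Manin for the Newton polygon comparison and for (4), Dieudonn\'e--Manin/Li--Oort for (3), Oort for (2)), and your self-contained Kronecker argument for (1) $\Leftrightarrow$ (4) is the standard proof of the cited result of Manin and is carried out correctly. The only point worth making explicit is the passage between the formal group and $A[p^\infty]$ in item (3): since $(G_{1,1})^g$ already has height $2g$, a formal group isogenous to it exhausts the full height of $A[p^\infty]$, so the \'etale part vanishes and the two objects coincide up to isogeny.
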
 

\subsection{Maximal and minimal}%\mbox{}\\

\begin{definition}\label{minmaxAV}
An abelian variety $A/\FF_q$ or a curve $X/\FF_q$
is \emph{maximal} (resp.\ \emph{minimal}) if its normalized Weil numbers 
all equal $-1$ (resp.\ $1$).
\end{definition}

By Corollaries \ref{pointsAV} or \ref{pointscurve}, $|A(\FF_q)|$ or $|X(\FF_q)|$ realizes its upper (resp.\ lower) bound
exactly when $A$ or $X$ is maximal (resp.\ minimal).
A necessary condition for maximality or minimality is that $q$ is a square
(i.e., $r$ is even), by Theorem \ref{zetaAV} or \ref{zetacurve}. 
Also $X/\FF_q$ is maximal (resp.\ minimal) if and only if 
$L(X/\mathbb{F}_q, T) = (1 + \sqrt{q}T)^{2g}$ (resp.\ $(1-\sqrt{q}T)^{2g}$).

The following facts are well-known and hold for curves as well as for abelian varieties, cf. \cite[Theorem 1.9]{vianarodriguez} and \cite[Theorem V.1.15(f)]{stichtenothII}.

\begin{lemma}\label{baseAV}
\begin{enumerate}
\item If $P(A/\mathbb{F}_{q},T) = \prod_{i=1}^{2g} (T-\alpha_i)$, then $P(A/\mathbb{F}_{q^m},T)  = \prod_{i=1}^{2g} (T-\alpha_i^m)$.

\item If $A/\FF_q$ is minimal or maximal, then it is supersingular. 
Conversely, if $A/\FF_{q}$ is supersingular, then it is minimal over some finite extension of $\FF_{q}$.

\item

\begin{enumerate}
\item If $A/\mathbb{F}_q$ is maximal, then $A/\mathbb{F}_{q^m}$ is maximal for odd $m$ and minimal for even $m$.
\item If $A/\mathbb{F}_q$ is minimal, then $A/\mathbb{F}_{q^m}$ is minimal for all $m \in {\mathbb N}$. 
\end{enumerate}
\end{enumerate}
\end{lemma}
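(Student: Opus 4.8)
The plan is to treat part (1) as the engine and deduce parts (2) and (3) as formal consequences, using only the behaviour of the Weil numbers and their normalizations under base extension. The single fact I would use repeatedly is the transformation rule for normalized Weil numbers: if the Weil numbers over $\FF_q$ are $\alpha_1,\ldots,\alpha_{2g}$ with $z_i=\alpha_i/\sqrt{q}$ as in Definition~\ref{NWNAV}, then over $\FF_{q^m}$ the normalized Weil numbers are $\alpha_i^m/\sqrt{q^m}=(\alpha_i/\sqrt{q})^m=z_i^m$, since $\sqrt{q^m}=(\sqrt{q})^m$ as positive reals. Granting part (1), this reduces every remaining assertion to an elementary computation with roots of unity.

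For part (1), which I expect to be the main obstacle (everything else being formal), I would show that the relative Frobenius over the extension is the $m$-th power of the relative Frobenius over $\FF_q$, namely $\pi_{A/\FF_{q^m}}=\big(\pi_{A/\FF_q}\big)^m$. This follows from \eqref{eq:frobs}: passing from $\FF_q$ to $\FF_{q^m}$ replaces the $q$-power absolute Frobenius $f_A$ by its $m$-th iterate (the $q^m$-power map) and replaces $Fr_{\FF_q}$ by $Fr_{\FF_{q^m}}=Fr_{\FF_q}^m$; since these two operations commute, $f_{A,q^m}\otimes Fr_{\FF_{q^m}}^{-1}=(f_{A,q}\otimes Fr_{\FF_q}^{-1})^m$. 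Via the Tate bijection \eqref{tate}, $\pi_{A/\FF_q}$ acts on $T_\ell(A)\otimes_{\ZZ_\ell}\QQ_\ell$ as a semisimple operator with eigenvalues $\alpha_1,\ldots,\alpha_{2g}$, so its $m$-th power is semisimple with eigenvalues $\alpha_1^m,\ldots,\alpha_{2g}^m$; hence the characteristic polynomial of $\pi_{A/\FF_{q^m}}$ is $\prod_{i=1}^{2g}(T-\alpha_i^m)$, as claimed. The only real care needed is the bookkeeping of the geometric base changes, so that the $m$-fold composite is literally $\pi_{A/\FF_q}^m$ as an operator on the Tate module.

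With part (1) in hand, parts (2) and (3) follow quickly. For the forward direction of (2), if $A/\FF_q$ is minimal or maximal then every $z_i\in\{1,-1\}$ is a root of unity, so $A$ is supersingular by Theorem~\ref{propssAV}(4). For the converse, supersingularity gives (again by Theorem~\ref{propssAV}(4)) that every $z_i$ is a root of unity; letting $N$ be the least common multiple of their orders and taking $m=N$, the transformation rule gives $z_i^m=1$ for all $i$, so $A/\FF_{q^m}$ is minimal by Definition~\ref{minmaxAV}. As a consistency check, $q^N$ is then automatically a perfect square, since otherwise the monic integer polynomial $P(A/\FF_{q^N},T)=(T-\sqrt{q^N})^{2g}$ could not lie in $\ZZ[T]$. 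Finally, for (3): if $A/\FF_q$ is maximal then $z_i=-1$ for all $i$, so over $\FF_{q^m}$ the normalized Weil numbers are $z_i^m=(-1)^m$, which is $-1$ (maximal) for odd $m$ and $1$ (minimal) for even $m$; and if $A/\FF_q$ is minimal then $z_i=1$, whence $z_i^m=1$ over every $\FF_{q^m}$, giving minimal for all $m$.
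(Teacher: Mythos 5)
Your proposal is correct. Note, however, that the paper gives no proof of Lemma \ref{baseAV} at all: it declares the facts ``well-known'' and cites \cite[Theorem 1.9]{vianarodriguez} and \cite[Theorem V.1.15(f)]{stichtenothII}. So any complete argument is necessarily a different route from the paper's. The standard proof of part (1) in those references (stated there for curves/function fields) goes through the zeta function, via the identity $Z(X/\FF_{q^m},T^m)=\prod_{\zeta^m=1}Z(X/\FF_q,\zeta T)$ obtained by sorting points of $X(\FF_{q^{mn}})$ by residue degree; you instead work directly with the relative Frobenius as an operator on $T_\ell(A)\otimes\QQ_\ell$ and use $\pi_{A/\FF_{q^m}}=\pi_{A/\FF_q}^m$, which is cleaner in the abelian-variety setting the lemma is stated in and avoids translating back and forth between $A$ and a curve. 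The telescoping identity for the relative Frobenius that you flag as the one point needing care is indeed standard (both $f_A$ and $\mathrm{id}\otimes Fr_K^{-1}$ commute with $\pi_{A/\FF_q}$ since $A$ is defined over $\FF_q$), and your deduction of (2) and (3) from the transformation rule $z_i\mapsto z_i^m$ is exactly the intended elementary computation; in particular your observation that $q^N$ is forced to be a square because $P(A/\FF_{q^N},T)=(T-\sqrt{q^N})^{2g}$ lies in $\ZZ[T]$ correctly dispatches the one subtlety in the converse direction of (2) (it is the same integrality argument the paper later records as \eqref{no01rodd}).
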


\section{Twists} \label{S3}

Let $K = \FF_{q}$ with $q=p^r$ and let $k=\overline{\FF}_p$.
For $m \in \mathbb{N}$, let $K_m$ be the unique extension of $K$ of degree $m$.
Let $Fr_K$ be the generator of $G_K = \mathrm{Gal}(k/K)$ as in Definition \ref{frobs}. 

In this section, we review the theory of twists of abelian varieties 
following \cite{serregal} and \cite{CHdescent}.

\subsection{Twists, cocycles, and Frobenius conjugacy classes}\mbox{}\\

Let $A/K$ be a principally polarized abelian variety of dimension $g$. 
We restrict to automorphisms of $A$ that are compatible with the principal polarization $\lambda$. 
For ease of notation, we write $A$ instead of $(A,\lambda)$ and $\mathrm{Aut}_k(A)$ instead of $\mathrm{Aut}_k(A,\lambda)$.

\begin{definition}\label{twist} A ($K$-)\emph{twist} of $A/K$ is an abelian variety $A'/K$ for which there exists a geometric isomorphism 
\begin{equation}\label{twistiso}
\phi: \bar{A} \xrightarrow{\simeq} \bar{A}',
\end{equation}
where $\bar{A}=A \times_K k$ and $\bar{A}'=A' \times_K k$.  A twist $A'/K$ is \emph{trivial} if $A \simeq_{K} A'$.
Let $\Theta(A/K)$ denote the set of $K$-isomorphism classes of twists $A'/K$ of $A/K$.
%KEEP For a field extension $K'/K$, let $\Theta(A,K'/K) \subset \Theta(A/K)$ denote the set of twists $A'/K$ of $A/K$ such that $A \times_K K' \simeq_{K'} A' \times_K K'$.
\end{definition}

\begin{definition}\label{twistphi}
Given $\sigma \in G_K$ and $\phi:  \bar{A} \xrightarrow{\simeq} \bar{A'}$, 
let $\prescript{\sigma}{}{\phi} \colon \bar{A} \xrightarrow{\simeq} \bar{A'}$ 
denote the (twisted) isomorphism which acts on $x \in \bar{A}(k)$ 
via $\prescript{\sigma}{}{\phi}(x) =\sigma(\phi(\sigma^{-1}(x)))$
or, more precisely, via
\[
\prescript{\sigma}{}{\phi} = (\mathrm{id}_{A'} \times_{\mathrm{Spec}(K)} \mathrm{Spec}(\sigma)) \circ \phi \circ (\mathrm{id}_{A} \times_{\mathrm{Spec}(K)}\mathrm{Spec}(\sigma))^{-1}.
\]

%\footnote{KEEP More precisely,
%%suppose that $\bar{A}$ has structure morphism $\pi : \bar{A} \to \mathrm{Spec}(k)$. 
%%For $\sigma \in G_K$, define $\prescript{\sigma}{}{\bar{A}}$ 
%%to be $\bar{A}$ with structure morphism $\mathrm{Spec}(\sigma)^{-1} \circ \pi$. 
%%Then $\bar{A}$ and 
%%$\prescript{\sigma}{}{\bar{A}}$ are isomorphic as schemes through the isomorphism $\phi_{\sigma}:= \mathrm{id}_{A} \times_{\mathrm{Spec}(K)} \mathrm{Spec}(\sigma)$, but a priori only isomorphic as $k$-varieties if the isomorphism $\mathrm{Spec}(\sigma)$ lifts to an isomorphism $\bar{A} \xrightarrow{\simeq} \bar{A}$. 
%%(The latter holds if $K$ is a field of moduli for $A$. This is true, since 
%%$K$ is a field of definition for $A$, and since $K$ is a finite field, which has cohomological dimension $\leq 1$.)
%%
%%Hence, 
%given $\sigma \in G_K$ and 
%$\phi:  \bar{A} \xrightarrow{\simeq} \bar{A'}$, by abuse of notation, we view 
%%there is an isomorphism $\prescript{\sigma}{}{\phi} : \prescript{\sigma}{}{\bar{A}} \xrightarrow{\simeq} \prescript{\sigma}{}{\bar{A'}}$, namely
%\[
%\prescript{\sigma}{}{\phi} = (\mathrm{id}_{A'} \times_{\mathrm{Spec}(K)} \mathrm{Spec}(\sigma)) \circ \phi \circ (\mathrm{id}_{A} \times_{\mathrm{Spec}(K)}\mathrm{Spec}(\sigma))^{-1},
%\]
%%which, by abuse of notation, can be viewed 
%as an isomorphism $\bar{A} \xrightarrow{\simeq} \bar{A'}$.
%and which we abbreviate using Equation \eqref{Faction}.
%}

Similarly, if $A' = A$ and $\tau \in  \mathrm{Aut}_{k}(A)$, let $\prescript{{Fr_K}}{}{\tau}$ denote the (twisted) automorphism, which acts on $x \in \bar{A}(k)$ by
\[
\prescript{{Fr_K}}{}{\tau}(x) = {Fr_K}(\tau({Fr_K^{-1}}(x))).
\]
\end{definition}

\begin{definition}\label{frobcon} Two automorphisms $g,h \in \mathrm{Aut}_{k}(A)$ are \emph{$K$-Frobenius conjugate} 
if there exists $\tau \in \mathrm{Aut}_{k}(A)$ such that
\[
g = \tau^{-1} h(\prescript{{Fr_K}}{}{\tau}).
\]
In particular, $g$ is $K$-Frobenius conjugate to ${\rm id}$ if $g=\tau^{-1} (\prescript{{Fr_K}}{}{\tau})$ for some $\tau \in \mathrm{Aut}_{k}(A)$.
\end{definition}

\begin{remark}\label{rem:fielddef}
If all automorphisms of $A$ are defined over $K$, then $G_{K}$ acts trivially on $\mathrm{Aut}_k(A)$.
(By \cite[Theorem 2(d)]{tate}, this is true if $A$ is maximal or minimal over $K$.) 
In this case, the $K$-Frobenius conjugacy classes are the same as standard conjugacy classes. 
%KEEP If $H \subset  {\rm Aut}_k(A)$ is stabilized by both conjugation and $Fr_K$-conjugation, then 
%the number of $K$-Frobenius conjugacy classes in $H$ is bounded above by the number
%of conjugacy classes in $H$ by \cite[Proposition 8]{meatop}.  
\end{remark}

\begin{proposition} \label{bij1} 
\cite[Proposition III.5]{serregal}, \cite[Proposition 1]{serreloc}, (see also \cite[Propositions 5,9]{meatop} for curves)
Given $\phi: \bar{A} \stackrel{\simeq}{\to} \bar{A}'$ as in \eqref{twistiso}, 
consider the cocycle $\xi_{\phi} \colon G_K \to \mathrm{Aut}_{k}(A)$ defined by 
\begin{equation}\label{cocyc}
\xi_{\phi}(\sigma) = \phi^{-1} \circ \prescript{\sigma}{}{\phi}.
\end{equation}
Next, for any $\xi \in C^1(G_K,\mathrm{Aut}_{k}(A))$, let 
\begin{equation} \label{Eaut}
g_\xi=\xi(Fr_K) \in \mathrm{Aut}_k(A).
\end{equation}
The maps taking $\phi \mapsto \xi_\phi \mapsto g_\phi := g_{\xi_{\phi}}$ yield bijections:
\begin{equation} \label{Ebijection}
\Theta(A/K) \to H^1(G_K,\mathrm{Aut}_{k}(A)) \to \{\textrm{$K$-Frobenius conjugacy classes of } \mathrm{Aut}_{k}(A)\}.
\end{equation}
\end{proposition}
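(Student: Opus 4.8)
The plan is to split the asserted composite into its two constituent maps and prove each is a bijection: the first, $\Theta(A/K)\to H^1(G_K,\mathrm{Aut}_k(A))$, is an instance of Galois descent for polarized abelian varieties, while the second, $H^1(G_K,\mathrm{Aut}_k(A))\to\{\text{$K$-Frobenius conjugacy classes}\}$, is an explicit computation of the cohomology of the procyclic group $G_K$. Throughout I would use two standing facts: that $\mathrm{Aut}_k(A)$ is a \emph{finite} group (the automorphism group of a polarized abelian variety is finite), and that the $G_K$-action on it is continuous, factoring through $\mathrm{Gal}(L/K)$ for a finite extension $L/K$ over which all automorphisms are defined.

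For the first map I would check that $\xi_\phi(\sigma)=\phi^{-1}\circ\prescript{\sigma}{}{\phi}$ is a well-defined element of $\mathrm{Aut}_k(A)$ (both $\phi$ and $\prescript{\sigma}{}{\phi}$ go $\bar A\to\bar A'$) satisfying the non-abelian cocycle identity $\xi_\phi(\sigma\tau)=\xi_\phi(\sigma)\cdot\prescript{\sigma}{}{(\xi_\phi(\tau))}$, a short cancellation using $\prescript{\sigma\tau}{}{\phi}=\prescript{\sigma}{}{(\prescript{\tau}{}{\phi})}$. Replacing $\phi$ by another geometric isomorphism $\psi:\bar A\to\bar A'$ for the same twist, one writes $\psi=\phi\circ c$ with $c=\phi^{-1}\psi\in\mathrm{Aut}_k(A)$ and computes $\xi_\psi(\sigma)=c^{-1}\,\xi_\phi(\sigma)\,\prescript{\sigma}{}{c}$, so the class in $H^1$ is independent of $\phi$ and $K$-isomorphic twists give cohomologous (in fact equal) cocycles; this yields a well-defined injective map. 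Surjectivity is the effectivity of descent: a cocycle is a descent datum, and since $A$ is quasi-projective and the polarization descends, every class arises from a genuine twist $A'/K$. This part I would simply cite to Serre.

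For the second map, the key observation is that $G_K\cong\hat{\ZZ}$ is topologically generated by $Fr_K$, so by continuity and the cocycle identity any cocycle $\xi$ is \emph{determined} by its single value $g_\xi=\xi(Fr_K)$, via $\xi(Fr_K^{\,n})=g_\xi\cdot\prescript{{Fr_K}}{}{g_\xi}\cdots\prescript{{Fr_K^{\,n-1}}}{}{g_\xi}$, where the pre-superscript denotes the Galois-twisted action of Definition~\ref{twistphi}. Evaluating the coboundary relation $\xi'(\sigma)=\tau^{-1}\xi(\sigma)\,\prescript{\sigma}{}{\tau}$ at $\sigma=Fr_K$ gives exactly $g_{\xi'}=\tau^{-1}g_{\xi}\,\prescript{{Fr_K}}{}{\tau}$, which is the definition of $K$-Frobenius conjugacy (Definition~\ref{frobcon}). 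Hence $\xi\mapsto g_\xi$ is well-defined on cohomology classes and injective onto Frobenius conjugacy classes.

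The step I expect to be the main obstacle is \emph{surjectivity} of this second map, i.e.\ showing every $g\in\mathrm{Aut}_k(A)$ equals $g_\xi$ for some \emph{continuous} cocycle on the profinite group $\hat{\ZZ}$, not merely on the dense subgroup generated by $Fr_K$. Here finiteness of $\mathrm{Aut}_k(A)$ is essential: letting $d=[L:K]$, the twisted norm $P=g\cdot\prescript{{Fr_K}}{}{g}\cdots\prescript{{Fr_K^{\,d-1}}}{}{g}$ satisfies $\xi(Fr_K^{\,dm})=P^{\,m}$ because $Fr_K^{\,d}$ acts trivially, so taking $m=\mathrm{ord}(P)$ gives $\xi(Fr_K^{\,dm})=\mathrm{id}$; the defining formula for $\xi$ then factors through $\ZZ/dm\ZZ$ and extends to a continuous cocycle on $\hat{\ZZ}$ with $\xi(Fr_K)=g$. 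Composing the two bijections completes the proof.
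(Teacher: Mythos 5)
Your proposal is correct and follows exactly the route the paper takes, which is simply to invoke \cite[Proposition III.5]{serregal} for the descent bijection $\Theta(A/K)\to H^1(G_K,\mathrm{Aut}_k(A))$ and \cite[Proposition 1]{serreloc} for the identification of $H^1$ of the procyclic group $G_K\cong\hat{\ZZ}$ with twisted (Frobenius) conjugacy classes; the paper gives no further proof beyond these citations. Your write-up supplies the standard details behind both citations accurately, including the one genuinely delicate point (continuity/surjectivity via the twisted norm $P$ and finiteness of $\mathrm{Aut}_k(A)$), so there is nothing to correct.
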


Given $g \in {\rm Aut}_k(A)$, let $\xi_g \in C^1(G_K,\mathrm{Aut}_{k}(A))$ be the cocycle such that $\xi_g(Fr_K)=g$
and let $\phi_g: \bar{A} \xrightarrow{\simeq} \bar{A}'$ be such that $\xi_{\phi_g} = \xi_g$.
Note that $\phi_g$ is not uniquely determined: 
if $\tau \in {\rm Aut}_k(A)$ is such that $\tau^{-1}g \prescript{{Fr_K}}{}{\tau} = g$, 
then $\phi' = \phi_g \circ \tau: \bar{A} \xrightarrow{\simeq} \bar{A}'$ also has the property that $\xi_{\phi'} = \xi_g$. 
In this case, $\tau$ is defined over $K$, so $\phi \circ \tau$ and $\phi$ have the same field of definition.

\begin{definition}\label{order}
The \emph{order} of a twist $A'/K$ is the smallest $m \in \mathbb{N}$ such that over the degree $m$ extension $K_m$ of $K$ there exists an isomorphism 
$\phi: A \times_K K_m \xrightarrow{\simeq} A' \times_K K_m$.
\end{definition}

If $A'/K$ is a twist of order $m$ and $\phi: \bar{A} \xrightarrow{\simeq} \bar{A}'$
is an isomorphism, then Definition \ref{order} implies that 
$\phi \circ \tau$ is defined over the degree $m$ extension $K_m$ of $K$ for some $\tau \in {\rm Aut}_k(A)$.

\begin{remark}\label{Rcompose}
If $T \in {\mathbb N}$, then 
\begin{equation} \label{Ecompose}
\xi_g(Fr^T_{K}) = g(\prescript{{Fr_K}}{}{g})(\prescript{{Fr^2_K}}{}{g}) \cdots (\prescript{{Fr^{T-1}_K}}{}{g}).
\end{equation}
Given $\phi : \bar{A} \xrightarrow{\simeq} \bar{A}'$, 
write $g:= g_{\phi}$ and let $T_g$ be the smallest $T \in \mathbb{N}$ such that $\xi_g(Fr_K^T) = {\rm id}$.
Then $T_g$ is the degree of the field of definition of $\phi$ over $K$.
\end{remark}

\begin{lemma}  \label{LTcG}
Let $c_g$ be the smallest $c \in \mathbb{N}$ such that $\xi_g(Fr_K^c)$ is defined over $K_c$.
Then $c_g$ divides $T_g$ 
and $T_g/c_g$ equals the order of $G:=g(\prescript{{Fr_K}}{}{g})(\prescript{{Fr^2_K}}{}{g}) \cdots (\prescript{{Fr^{c_g-1}_K}}{}{g})$.
\end{lemma}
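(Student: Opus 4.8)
The plan is to set $G_c := \xi_g(Fr_K^c)$ for $c \in \NN$, so that $G = G_{c_g}$ by \eqref{Ecompose}, and to exploit the cocycle composition rule, which by \eqref{Ecompose} reads
\[
G_{a+b} = G_a \cdot \prescript{{Fr_K^a}}{}{G_b} \qquad (a,b \in \NN).
\]
Because $G = G_{c_g}$ is defined over $K_{c_g}$, the generator $Fr_K^{c_g}$ of $\Gal(k/K_{c_g})$ fixes $G$, so iterating the rule gives $G_{nc_g} = G^n$ for all $n$. Granting the divisibility $c_g \mid T_g$, I would write $T_g = N c_g$; then $G^N = G_{T_g} = \mathrm{id}$, so $\mathrm{ord}(G) \mid N$, while $G_{c_g\,\mathrm{ord}(G)} = G^{\mathrm{ord}(G)} = \mathrm{id}$ forces $T_g \leq c_g\,\mathrm{ord}(G)$ by the minimality of $T_g$; hence $N = \mathrm{ord}(G)$, i.e.\ $T_g/c_g = \mathrm{ord}(G)$. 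Thus everything reduces to proving $c_g \mid T_g$, which I expect to be the crux.

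For this I would first record two structural facts. From $G_{T_g} = \mathrm{id}$ and the composition rule, $G_{c_g + T_g} = G_{c_g}\cdot \prescript{{Fr_K^{c_g}}}{}{G_{T_g}} = G$ and also $G_{c_g+T_g} = G_{T_g}\cdot \prescript{{Fr_K^{T_g}}}{}{G_{c_g}} = \prescript{{Fr_K^{T_g}}}{}{G}$, so $Fr_K^{T_g}$ fixes $G$ and $G$ is defined over $K_{T_g}$ as well as over $K_{c_g}$. Since $K_{c_g} \cap K_{T_g} = K_e$ with $e := \gcd(c_g, T_g)$, the automorphism $G$ is defined over $K_e$. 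Using $\prescript{{Fr_K^e}}{}{G} = G$ and $\prescript{{Fr_K^{c_g}}}{}{G} = G$, the composition rule applied to $G_{c_g + e}$ in the two orders yields $G\cdot \prescript{{Fr_K^{c_g}}}{}{G_e} = G_e \cdot G$, i.e.\ the conjugation rule $\prescript{{Fr_K^{c_g}}}{}{G_e} = G^{-1}G_e\, G$, and by iteration $\prescript{{Fr_K^{nc_g}}}{}{G_e} = G^{-n} G_e\, G^n$.

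Now suppose toward a contradiction that $e < c_g$, and put $\ell = c_g/e \geq 2$ and $\ell' = T_g/e$, so $\gcd(\ell,\ell') = \gcd(c_g,T_g)/e = 1$. Choose integers $a \geq 1$, $b \geq 0$ with $a\ell' - b\ell = 1$; then the index $m := aT_g = (b\ell+1)e = b c_g + e$ is a multiple of $T_g$, so $G_m = \mathrm{id}$. On the other hand, expanding with the composition rule and the facts above,
\[
G_m = G_{bc_g + e} = G_{bc_g}\cdot \prescript{{Fr_K^{bc_g}}}{}{G_e} = G^b\cdot G^{-b} G_e\, G^b = G_e\, G^b,
\]
so $G_e = G^{-b}$. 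As $G$ is defined over $K_e$, so is $G_e$; but $1 \leq e < c_g$, contradicting the minimality of $c_g$. Hence $e = c_g$, that is, $c_g \mid T_g$, and the lemma follows.

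I expect the main obstacle to be exactly this divisibility $c_g \mid T_g$; the identity $T_g/c_g = \mathrm{ord}(G)$ is then bookkeeping with $G_{nc_g}=G^n$. The two nonroutine inputs are that $G$ descends to $K_e$ (which is what makes the coprimality $\gcd(\ell,\ell')=1$ available) and the conjugation rule $\prescript{{Fr_K^{nc_g}}}{}{G_e}=G^{-n}G_eG^n$; together they let a single index $m$ be read both as a multiple of $T_g$ and as $bc_g+e$, producing the contradiction.
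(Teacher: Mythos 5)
Your proof is correct, and for the crucial divisibility $c_g \mid T_g$ it takes a genuinely different route from the paper. The paper deduces from Serre's bijection $\Theta(A,K_{T_g}/K)\to H^1(\Gal(K_{T_g}/K),\mathrm{Aut}_{K_{T_g}}(A))$ that $g$ is defined over $K_{T_g}$, concludes $K_{c_g}\subset K_{T_g}$, and then obtains $T_g/c_g=\mathrm{ord}(G)$ by base-changing to $K_{c_g}$, where the twist is the one attached to the $K_{c_g}$-rational automorphism $G$, so that the case $c_g=1$ applies; your relation $G_{nc_g}=G^n$ together with the minimality of $T_g$ is that same reduction carried out directly in the cocycle formalism. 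Where you genuinely diverge is the divisibility: instead of passing through the field of definition of $g$, you extract it from the conjugation rule $\prescript{Fr_K^{nc_g}}{}{G_e}=G^{-n}G_eG^n$ and a Bezout choice of index $m=aT_g=bc_g+e$ with $e=\gcd(c_g,T_g)$, which forces $G_e=G^{-b}$ to be defined over $K_e$ and contradicts the minimality of $c_g$ unless $e=c_g$. What your version buys is self-containedness and a fully explicit justification of a step the paper leaves terse: the inference from ``$g$ is defined over $K_{T_g}$'' to ``$K_{c_g}\subset K_{T_g}$'' implicitly requires relating the minimal $c$ for which $\xi_g(Fr_K^c)$ descends to $K_c$ to the field of definition of $g$ itself, whereas your argument proves the divisibility directly from the definition of $c_g$. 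What the paper's version buys is the conceptual picture --- the twist descends to $K_{c_g}$ as the twist by the rational automorphism $G$ --- which is also the form in which the lemma gets used later.
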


\begin{proof}
When $c_g=1$, the result is immediate, since $g$ is defined over $K$ and $G = g$.  

Now suppose that $c_g >1$.
By Remark \ref{Rcompose}, the twist is an element $A'$ of the set $\Theta(A, K_{T_g}/K)$ of twists $A'/K$ of $A/K$ such that $A \times_K K_{T_g} \simeq_{K_{T_g}} A' \times_K K_{T_g}$.
The bijection
$\theta \colon \Theta(A,K_{T_g}/K) \to H^1(\mathrm{Gal}(K_{T_g}/K),\mathrm{Aut}_{K_{T_g}}(A))$
from \cite[Proposition III.5]{serregal} shows that $A'$ corresponds to the automorphism
$\xi_g(Fr_K) = g$ in $\mathrm{Aut}_{K_{T_g}}(A)$. 
It follows that $g$ (and thus $G$) is defined over $K_{T_g}$.
Hence, $K_{c_g} \subset K_{T_g}$ and $c_g \vert T_g$.

The base changes $A_{c_g} = A \times_K K_{c_g}$ and $A'_{c_g}=A' \times_K K_{c_g}$
first become isomorphic over $K_{T_g}K_{c_g} = K_{T_g}$.
So $\phi$ is defined over an extension of $K_{c_g}$ of degree $T'=[K_{T_g}: K_{c_g}] = T_g/c_g$.  
The automorphism corresponding to the twist over $K_{c_g}$ is $G$.
Hence, replacing $g$ by $G$, the conclusion follows from the case when $c_g=1$.
\end{proof}

\subsection{Effect of a twist on the Frobenius endomorphism}\mbox{}\\

In this section, we study how twisting $A/K_c$ by $G \in {\rm Aut}_{K_c}(A)$ affects the relative 
Frobenius endomorphism $\pi=\pi_A \in \mathrm{End}_{K_c}(A)$ of $A$ and 
the normalized Weil numbers of $A$ over $K_c$.

\begin{proposition}\label{frobtwist}
Suppose that $A$ is defined over $K_c$
and $\phi : A \times_{K_c} k \xrightarrow{\simeq} A' \times_{K_c} k$ 
is a geometric isomorphism.
Suppose that $G_{\phi} = \xi_{\phi}(Fr_{K_c})$ is in ${\rm Aut}_{K_c}(A)$.
Then the relative Frobenius endomorphism $\pi'$ of $A'$ satisfies
\begin{equation} \label{eqnfrobtwist}
\phi^{-1} \circ \pi'\circ \phi = \pi_A \circ G_{\phi}^{-1}.
\end{equation}
\end{proposition}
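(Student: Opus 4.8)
The goal is the relation $\phi^{-1}\circ\pi'\circ\phi = \pi_A\circ G_\phi^{-1}$, comparing the relative Frobenius of the twist $A'$ with that of $A$ after transporting across the geometric isomorphism $\phi$. The plan is to work entirely on the level of relative Frobenius endomorphisms, using the factorization $\pi_A = f_A\otimes Fr_K^{-1}$ from \eqref{eq:frobs} together with the definition $G_\phi = \xi_\phi(Fr_{K_c}) = \phi^{-1}\circ\prescript{Fr_{K_c}}{}{\phi}$ from \eqref{cocyc}. Since everything in sight becomes an endomorphism of $\bar A = A\times_{K_c}k$ after pulling back by $\phi$, I would interpret the claim as an identity of geometric endomorphisms and verify it by unwinding how each operator acts.

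First I would recall that the absolute Frobenius $f_A$ is functorial and commutes with every morphism of schemes in characteristic $p$: for any $\psi\colon \bar A\to\bar A'$ one has $f_{A'}\circ\psi = \psi\circ f_A$, because $f$ is defined by the $q$-power map on sections, hence natural. In particular $f_{A'}\circ\phi = \phi\circ f_A$, so $\phi^{-1}\circ f_{A'}\circ\phi = f_A$. This is the key structural input: the \emph{absolute} Frobenius is unaffected by transport through $\phi$. Next I would write both relative Frobenii in the form \eqref{eq:frobs}, namely $\pi' = f_{A'}\otimes Fr_{K_c}^{-1}$ and $\pi_A = f_A\otimes Fr_{K_c}^{-1}$, and express the conjugate $\phi^{-1}\circ\pi'\circ\phi$ by letting the Galois-twist pass through $\phi$. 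The point is that conjugating the $Fr_{K_c}^{-1}$-part of $\pi'$ by $\phi$ introduces exactly a twisted isomorphism: $\phi^{-1}\circ(\mathrm{id}\otimes Fr_{K_c}^{-1})\circ\phi$ differs from $\mathrm{id}\otimes Fr_{K_c}^{-1}$ by the factor $\phi^{-1}\circ\prescript{Fr_{K_c}}{}{\phi}$, which by \eqref{cocyc} is precisely $G_\phi$ (up to the sign of the exponent, which I would track carefully).

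Concretely, I would compute
\begin{equation*}
\phi^{-1}\circ\pi'\circ\phi
= \phi^{-1}\circ(f_{A'}\otimes Fr_{K_c}^{-1})\circ\phi
= (\phi^{-1}\circ f_{A'}\circ\phi)\circ\bigl(\phi^{-1}\circ(\mathrm{id}\otimes Fr_{K_c}^{-1})\circ\phi\bigr),
\end{equation*}
then substitute $\phi^{-1}\circ f_{A'}\circ\phi = f_A$ from the functoriality step, and identify the remaining factor with $(\mathrm{id}\otimes Fr_{K_c}^{-1})\circ G_\phi^{-1}$ by comparing with Definition \ref{twistphi} and \eqref{cocyc}. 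Recombining via \eqref{eq:frobs} gives $f_A\otimes Fr_{K_c}^{-1}$ postcomposed with $G_\phi^{-1}$, i.e.\ $\pi_A\circ G_\phi^{-1}$, as desired. The hypothesis $G_\phi\in\mathrm{Aut}_{K_c}(A)$ ensures $G_\phi$ descends to $K_c$ and commutes appropriately with the base scheme, so the rearrangement of the $Fr_{K_c}^{-1}$-factor past $G_\phi$ is legitimate.

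The main obstacle I anticipate is bookkeeping rather than conceptual: pinning down the exact direction of the twist and the sign of the Frobenius exponent so that $G_\phi$ rather than $G_\phi^{-1}$ (or $\prescript{Fr}{}{G_\phi}$) appears, since the definitions \eqref{twistiso}--\eqref{cocyc} involve both $\sigma$ and $\sigma^{-1}$. I would resolve this by consistently using the precise scheme-theoretic formula $\prescript{\sigma}{}{\phi} = (\mathrm{id}_{A'}\times\mathrm{Spec}(\sigma))\circ\phi\circ(\mathrm{id}_A\times\mathrm{Spec}(\sigma))^{-1}$ from Definition \ref{twistphi} and the relation $\pi_A = f_A\otimes Fr_K^{-1}$, checking the composite on the level of these explicit base-change maps rather than on $k$-points, where the contravariance of $\mathrm{Spec}$ can easily flip a sign. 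An instructive sanity check is the trivial twist $\phi = \mathrm{id}$, $G_\phi=\mathrm{id}$, for which \eqref{eqnfrobtwist} must reduce to $\pi'=\pi_A$.
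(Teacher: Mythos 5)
Your proposal is correct and follows essentially the same route as the paper's proof: both arguments combine the naturality of the absolute Frobenius ($\phi^{-1} \circ f_{A'} \circ \phi = f_A$), the factorization $\pi = f \otimes Fr_{K_c}^{-1}$ from \eqref{eq:frobs}, and the cocycle formula \eqref{cocyc} rewritten as $(\mathrm{id}_A \otimes Fr_{K_c}^{-1}) \circ G_\phi^{-1} = \phi^{-1} \circ (\mathrm{id}_A \otimes Fr_{K_c}^{-1}) \circ \phi$, then recombine. Your insertion of $\phi \circ \phi^{-1}$ to split the conjugation into the $f$-part and the Galois-twist part is exactly the computation carried out in the paper (following \cite[Proposition 11]{meatop}).
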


\begin{remark} 
The right hand side of \eqref{eqnfrobtwist} is defined over $K_c$, so the left hand side is as well.
In particular, 
$\pi'$ and $\pi_A \circ G_{\phi}^{-1}$ 
have the same characteristic polynomial.
\end{remark}

\begin{proof}
Let $f' = f_{A'}$ be the absolute Frobenius endomorphism of $A'$.
By \eqref{eq:frobs}, $\pi_A = f_A \otimes Fr_{K_c}^{-1}$ and $\pi' = f_{A'} \otimes Fr_{K_c}^{-1}$.
Also, $f = \phi^{-1} \circ f' \circ \phi$.
Furthermore, by \eqref{cocyc},
\[
G_{\phi}^{-1} = (\mathrm{id}_A \otimes Fr_{K_c}) \circ \phi^{-1} \circ (\mathrm{id}_A \otimes Fr_{K_c}^{-1}) \circ \phi.
\]
Hence, as in \cite[Proposition 11]{meatop}, 
\[
\begin{aligned}
 \phi^{-1} \circ \pi' \circ \phi &= \phi^{-1} \circ \left(f' \otimes Fr_{K_c}^{-1} \right) \circ \phi \\
 &= \phi^{-1} \circ \left(\left(\phi \circ f \circ \phi^{-1} \right) \otimes Fr_{K_c}^{-1} \right) \circ \phi\\
 &= \left(f \otimes Fr_{K_c}^{-1}\right) \circ \left( \mathrm{id}_A \otimes Fr_{K_c}\right) \circ \phi^{-1} \circ \left( \mathrm{id}_A \otimes Fr_{K_c}^{-1}\right) \circ \phi \\
 &= \pi_A \circ G_{\phi}^{-1}.
 \end{aligned}
\]
\end{proof}

\subsection{Twists by automorphisms of order $2$}\mbox{}

\begin{lemma} \label{Lquadtrivial}
Given $\phi:  \bar{A} \xrightarrow{\simeq} \bar{A}'$, 
if $g_\phi \in \mathrm{Aut}_K(A)$ has order $2$, 
then the twist $A'/K$ is either quadratic or trivial.
It is trivial if and only if $g_\phi$ is $K$-Frobenius conjugate to ${\rm id}$.
\end{lemma}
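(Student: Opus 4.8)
The plan is to unwind the bijections of Proposition~\ref{bij1} together with Remark~\ref{Rcompose}. Since $g_\phi \in \mathrm{Aut}_K(A)$ is defined over $K$, the Galois action of $G_K$ on $\mathrm{Aut}_k(A)$ fixes $g_\phi$, so the twisted automorphisms appearing in \eqref{Ecompose} simplify: each $\prescript{{Fr^j_K}}{}{g_\phi}$ equals $g_\phi$ itself. Hence $\xi_{g_\phi}(Fr_K^T) = g_\phi^T$. First I would compute $T_{g_\phi}$, the order of the field of definition of $\phi$, as the smallest $T$ with $g_\phi^T = {\rm id}$. Because $g_\phi$ has order $2$, this forces $T_{g_\phi} \in \{1,2\}$: either $g_\phi = {\rm id}$, giving $T_{g_\phi}=1$, or $g_\phi \neq {\rm id}$, giving $T_{g_\phi}=2$. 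By Remark~\ref{Rcompose}, $T_{g_\phi}$ is exactly the degree of the field of definition of $\phi$ over $K$, which by Definition~\ref{order} is the order of the twist $A'/K$. So the order is $1$ or $2$, establishing that the twist is trivial or quadratic.

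Next I would identify precisely when the twist is trivial. A twist is trivial exactly when $A' \simeq_K A$, which under the bijection \eqref{Ebijection} corresponds to $g_\phi$ lying in the $K$-Frobenius conjugacy class of ${\rm id}$. Concretely, $g_\phi$ is $K$-Frobenius conjugate to ${\rm id}$ means $g_\phi = \tau^{-1}(\prescript{{Fr_K}}{}{\tau})$ for some $\tau \in \mathrm{Aut}_k(A)$, as in Definition~\ref{frobcon}. Since the bijection $\Theta(A/K) \to \{\text{$K$-Frobenius conjugacy classes}\}$ sends the class of ${\rm id}$ to the trivial twist, triviality of $A'/K$ is equivalent to $g_\phi$ being $K$-Frobenius conjugate to ${\rm id}$. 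This is essentially a restatement of what the bijection does on the distinguished element, so the equivalence is formal.

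I would organize the proof as two short claims: (1) the order is $1$ or $2$, via the computation $\xi_{g_\phi}(Fr_K^T)=g_\phi^T$ and the order-$2$ hypothesis; and (2) the trivial-versus-quadratic dichotomy corresponds to whether $g_\phi$ is $K$-Frobenius conjugate to ${\rm id}$, via \eqref{Ebijection}. One subtle point worth stating explicitly is that $g_\phi \in \mathrm{Aut}_K(A)$ (not merely $\mathrm{Aut}_k(A)$) is what lets me replace each twisted $\prescript{{Fr^j_K}}{}{g_\phi}$ by $g_\phi$; this is the reason the simplification in \eqref{Ecompose} holds and is the hinge of the whole argument.

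The main obstacle, such as it is, is bookkeeping rather than genuine difficulty: I must be careful that ``order $2$'' for the automorphism $g_\phi$ and ``order'' of the twist (Definition~\ref{order}) are different notions, and that the argument cleanly distinguishes them. The hypothesis $g_\phi \in \mathrm{Aut}_K(A)$ is essential and should not be dropped; without it the twisted powers need not collapse to $g_\phi^T$, and the clean bound $T_{g_\phi} \leq 2$ could fail. Provided I keep these two ``orders'' separate and invoke the defined-over-$K$ hypothesis at the right moment, the proof is a direct application of the machinery already established in Proposition~\ref{bij1} and Remark~\ref{Rcompose}.
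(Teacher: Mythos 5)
Your proof is correct and follows essentially the same route as the paper: the paper invokes Lemma \ref{LTcG} with $c_g=1$ (which is exactly your collapse of the twisted powers in \eqref{Ecompose} to $g_\phi^T$) to get $T_{g_\phi}=2$, and then cites Proposition \ref{bij1} for the triviality criterion. The only nitpick is that $T_{g_\phi}$, the degree of the field of definition of your chosen $\phi$, is an \emph{upper bound} for the order of the twist in the sense of Definition \ref{order} rather than equal to it (the paper says ``at most $2$''), but this does not affect the conclusion.
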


\begin{proof}
Write $g=g_\phi$.  By hypothesis, $c_g = 1$, so by Lemma \ref{LTcG}, 
$T_g = \vert g\vert =2$.
By Definition \ref{order}, the order of the twist is at most $2$.
The last statement follows from Proposition \ref{bij1}.
\end{proof}

The conclusion of Lemma \ref{Lquadtrivial} can be false if $g_\phi$ is not defined over $K$.

\begin{definition}\label{iota} 
Let $\iota \in \mathrm{End}_K(A) \otimes \QQ_\ell$ correspond 
to $[-1] \in \mathrm{End}_{G_K}(T_{\ell}(A) \otimes_{\ZZ_{\ell}} \QQ_{\ell})$ under the bijection in \eqref{tate}.
Then $\iota$ is defined over $K$ and central in $\mathrm{Aut}_k(A)$. 
Let $A_\iota$ denote the $K$-twist of $A$ for $\iota$. 
By Lemma \ref{Lquadtrivial}, $A_{\iota}/K$ is either a trivial or a quadratic twist. 
\end{definition}

By Proposition \ref{frobtwist}, if $A/K$ is maximal, then $A_\iota/K$ is minimal, and vice versa.
Conversely, the next result shows 
that $\iota$ is the only automorphism whose twist can switch between the  maximal and minimal conditions.  
We generalize this result in Corollary~\ref{Ctwistodd}.

\begin{proposition}\label{maxtomin}
Suppose that $\phi: A \times_K k \stackrel{\simeq}{\to} A' \times_K k$ where $A/K$ is maximal and $A'/K$ is minimal (or vice versa). 
Then $g_\phi = \iota$ and $A'/K \simeq A_\iota/K$ is a quadratic twist of $A/K$.

Replacing $A$ by a curve $X$, the same conclusions are true and $X$ is also hyperelliptic.
\end{proposition}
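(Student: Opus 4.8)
The plan is to compute the effect of the twist on the relative Frobenius via Proposition~\ref{frobtwist} and read off $g_\phi$ from its action on the Tate module. First I would record the numerology: since $A/K$ is maximal (resp.\ minimal), its normalized Weil numbers all equal $-1$ (resp.\ $1$), so $\pi_A$ acts on $T_\ell(A)\otimes_{\ZZ_\ell}\QQ_\ell$ as the scalar $-\sqrt{q}$ (resp.\ $\sqrt{q}$); likewise $\pi'$ acts on $T_\ell(A')\otimes_{\ZZ_\ell}\QQ_\ell$ as $\sqrt{q}$ (resp.\ $-\sqrt{q}$). Because $A$ is maximal or minimal over $K$, Remark~\ref{rem:fielddef} shows that every geometric automorphism of $A$ is already defined over $K$; in particular $g_\phi\in\mathrm{Aut}_K(A)$, the integer $c_{g_\phi}$ of Lemma~\ref{LTcG} equals $1$, and $G_\phi=g_\phi$. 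Hence Proposition~\ref{frobtwist} applies with $c=1$ and gives $\phi^{-1}\circ\pi'\circ\phi=\pi_A\circ g_\phi^{-1}$.

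Next I would pass to the Tate module through the bijection \eqref{tate}. The isomorphism induced by $\phi$ intertwines the two Tate modules, and conjugating a scalar by it returns the same scalar, so the left-hand side of \eqref{eqnfrobtwist} acts on $T_\ell(A)\otimes_{\ZZ_\ell}\QQ_\ell$ as $\sqrt{q}$ while the right-hand side acts as $-\sqrt{q}\cdot g_\phi^{-1}$; comparing, $g_\phi^{-1}$, and hence $g_\phi$, acts as $-\mathrm{id}$. Since \eqref{tate} is a bijection and $\iota$ is by Definition~\ref{iota} the element mapping to $[-1]$, injectivity forces $g_\phi=\iota$. Interchanging the roles of $-\sqrt{q}$ and $\sqrt{q}$ handles the ``vice versa'' case identically.

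Now $\iota=[-1]$ has order $2$, so Lemma~\ref{Lquadtrivial} shows $A'/K$ is either quadratic or trivial. A trivial twist would give $A'\simeq_K A$, forcing $A'$ to be maximal and contradicting that $A'$ is minimal; thus $A'/K$ is a genuine quadratic twist. As both $A'$ and $A_\iota$ correspond to $\iota$ under Proposition~\ref{bij1} (and $\iota$ is central, so its $K$-Frobenius conjugacy class is $\{\iota\}$), we conclude $A'/K\simeq A_\iota/K$.

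Finally, for a curve $X$ with $A=\mathrm{Jac}(X)$, the cocycle $\xi_\phi$ takes values in $\mathrm{Aut}_k(X)$, and functoriality of the Jacobian sends it to the cocycle $\sigma\mapsto\mathrm{Jac}(\xi_\phi(\sigma))$; thus the automorphism attached to the twist of $\mathrm{Jac}(X)$ is the image of $g_\phi$ under $\mathrm{Aut}_k(X)\to\mathrm{Aut}_k(\mathrm{Jac}(X))$, which by the abelian-variety case equals $\iota=[-1]$. An automorphism of $X$ inducing $[-1]$ on $\mathrm{Jac}(X)$ acts as $-1$ on $H^0(X,\Omega^1)$, so it fixes no nonzero differential and the quotient $X/\langle g_\phi\rangle$ has genus $0$; therefore $g_\phi$ is the hyperelliptic involution and $X$ is hyperelliptic (for $g=1$ this is the standard involution $[-1]$). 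I expect this curve step to be the main obstacle: one must match the $X$-cocycle with the $\mathrm{Jac}(X)$-cocycle through Torelli's theorem and argue, via the action on $H^0(X,\Omega^1)$, that the \emph{only} automorphism of $X$ realizing $[-1]$ on the Jacobian is the hyperelliptic involution.
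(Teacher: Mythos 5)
Your proposal is correct and follows essentially the same route as the paper's proof: both use Proposition~\ref{frobtwist} together with the fact that $\pi_A$ and $\pi_{A'}$ act as the scalars $\mp\sqrt{q}$ on the Tate module (using Tate's theorem that all automorphisms are defined over $K$ and the centrality of $\iota$) to force $g_\phi=\iota$, then Lemma~\ref{Lquadtrivial} for quadraticity, and the vanishing of the trivial eigenspace of $\iota$ on differentials to get hyperellipticity. The only cosmetic difference is that you read off $g_\phi=\iota$ directly from the Tate-module identity, whereas the paper phrases the same computation as ``$g_\phi$ is $K$-Frobenius conjugate to $\iota$'' before collapsing the conjugacy class to a singleton.
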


\begin{proof}
By Definition \ref{minmaxAV}, $P(A/K, T)$ and $P(A'/K, T)$ split completely into linear factors over $\QQ$. 
Thus the linear operators corresponding to $\pi_A$ and $\pi_{A'}$ under \eqref{tate} are 
diagonalizable over $\QQ_{\ell}$. 
So $\pi_A = \sqrt{q} \cdot \iota$ and $\pi_{A'} = \sqrt{q} \cdot {\rm id}$ in $\mathrm{End}_K(A) \otimes \QQ_\ell$. 
By Proposition \ref{frobtwist}, this implies that 
$g_{\phi}=\xi_{\phi}(Fr_K)$ is $K$-Frobenius conjugate to $\iota$. 
So $g_{\phi} = \tau^{-1}\iota \prescript{{Fr_K}}{}{\tau}$ for some $\tau \in \mathrm{Aut}_k(A)$. 

Since $A/K$ is maximal, $\mathrm{Aut}_k(A) = \mathrm{Aut}_{K}(A)$ \cite[Theorem 2d]{tate}. 
In particular, $\prescript{{Fr_K}}{}{\tau} = \tau$.
Because $\iota$ is central in $\mathrm{Aut}_k(A)$, the $K$-Frobenius conjugacy class of $\iota$ consists of one element. 
Thus $g_{\phi} = \iota$ and $A'/K \simeq A_\iota/K$.
Moreover, $\iota$ satisfies the conditions of Lemma \ref{Lquadtrivial}. 
Since $A \not\simeq_K A'$, the twist $A'/K$ is nontrivial and thus quadratic.

The same conclusions are true replacing $A$ by a curve $X$.  
Also, $X$ is hyperelliptic because the quotient of $X$ by $\iota$ has genus $0$, since the trivial eigenspace for the action of $\iota$ is trivial.
\end{proof}

\section{Fully maximal, fully minimal, and mixed abelian varieties} \label{S4}

Let $K=\FF_q$ with $q=p^r$ and let $k = \overline{\FF}_p$. 
Let $A$ be a principally polarized supersingular abelian variety of dimension $g$ defined over $K$.
Let $\NWNs(A/K)=\{z_1, \bar{z}_1, \ldots, z_g, \bar{z}_g\}$ be the \NWN of $A/K$, as in Definition \ref{NWNAV}.  

\subsection{Period, parity, and types}\label{Sobser}

\begin{definition}\label{periodinWeilnums}\mbox{}
\begin{enumerate}
\item The {\it $\FF_q$-period} $\mu(A)$ of $A$ 
is the smallest $m \in {\mathbb N}$ such that $q^m$ is square 
%($rm$ is even) 
and 
\begin{itemize}
\item[(i)] $z_i^m = -1$ for all $1 \leq i \leq g$, or
\item[(ii)] $z_i^m =1$ for all $1 \leq i \leq g$. 
\end{itemize}
\item The {\it $\FF_q$-parity} $\delta(A)$ is $1$ in case (i) and is $-1$ in case (ii).
\end{enumerate}
\end{definition}

In other words, the period is the smallest $m \in \mathbb{N}$ such that $\pi_{A/\FF_{q^m}}$ is $\sqrt{q^m}[\pm 1]$.
The definition of the period and parity is compatible with \cite[page $144$]{stichxing}\label{periodparitycurve}.
Note that $A$ is maximal (resp.\ minimal) over $\FF_q$ if and only if $\mu(A) =1$ and $\delta(A)=1$
(resp.\ $\delta(A)=-1$).
%KEEP 
%The following is an equivalent definition in the literature; note that $z_i^m = \mp 1$ if and only if the minimal polynomial of $\alpha_i =  \sqrt{q}z_i$ divides $(T^m \pm q^{m/2})$.
%
%\begin{definition} \cite[page $144$]{stichxing}\label{periodparitycurve}
%Write $P(A/\FF_q, T)=\prod f_i^{d_i}$ with $f_i$ pairwise relatively prime.
%\begin{enumerate}
%\item The {\it $\FF_q$-period} of $A/\FF_{q}$ is
%\[\mu(A) = {\rm min} \{n \in \NN \mid q^{n/2} \in \ZZ {\rm \ and \ }
%\prod f_i \ {\rm divides} \ (i) \ (T^n + q^{n/2}) \ {\rm or} \ (ii) \ (T^n - q^{n/2})\}.\]
%\item The {\it $\FF_q$-parity} of $A/\FF_{q}$ is $\delta(A) = 1$ in case (i) 
%and $\delta(A) = -1$ in case (ii).
%\end{enumerate}
%\end{definition}

Let $\Theta(A/K)$ be the set of $K$-isomorphism classes of twists $A'/K$ of $A$, see Definition \ref{twist}.

\begin{definition}\label{type}
A principally polarized supersingular abelian variety $A/K$ is of one of the following \emph{types} over $K$:
\begin{enumerate}
\item \emph{fully maximal} if $A'/K$ has $K$-parity $\delta=1$ for all $A' \in \Theta(A/K)$;
\item \emph{fully minimal} if $A'/K$ has $K$-parity $\delta=-1$ for all $A' \in \Theta(A/K)$;
\item \emph{mixed} if there exist $A', A'' \in \Theta(A/K)$ with $K$-parities $\delta(A')=1$ 
and $\delta(A'')=-1$.
\end{enumerate}
\end{definition}

If $A/K$ has $K$-period $1$, then $A/K$ is maximal or minimal and so $A$ is mixed over $K$ 
since $A_{\iota}$ has the opposite parity.
For this reason, the terminology is better suited for curves than for abelian varieties, see Lemmas \ref{Lbasicdef} and \ref{Lbasicfact}.  Also, it is most interesting to study the type of $A/K$ over small fields of definition.

\begin{example}
Let $p \equiv 3 \bmod 4$ with $p > 3$.
The supersingular elliptic curve $E:y^2=x^3-x$ has ${\rm Aut}_k(E) \simeq \ZZ/4\ZZ$.
Then $\NWNs(E/\FF_p) = \{\pm i\}$ and $\NWNs(E/\FF_{p^2})= \{-1,-1\}$.  
So $E$ has two $\FF_p$-twists and is fully maximal over $\FF_p$.
It has four $\FF_{p^2}$-twists and is mixed over $\FF_{p^2}$ since an automorphism of order $4$ acts on
$\NWNs(E/\FF_{p^2})$ by multiplication by $\pm i$.
Cf.\ Lemma~\ref{LEextraaut}.
\end{example}

Let $n$ be odd.  The parity is preserved under a degree $n$ extension,
% KEEP because $q^m$ is a square exactly when $q^{mn}$ is a square and 
%there exists $m$ such that $z_i^m = -1$ if and only if there exists $m$ such that $(z_i^m)^n= - 1$; 
i.e., $\delta(A \times_K K_n)=\delta(A)$. 
Hence, if $A/K$ is mixed, then $A \times_K K_n$ is also mixed: 
if $A'/K$ is a twist with opposite parity from $A/K$, then $A' \times_K K_n$ is a twist 
of opposite parity from $A \times_K K_n$.
Motivated by this, we measure the $2$-divisibility of the orders of the period in the next section.  
 
\subsection{Relationship between types and Weil numbers}\mbox{}\\

By Theorem \ref{propssAV}, 
the \NWN $\{z_1, \ldots, z_g\}$ of a supersingular abelian variety $A/K$ are roots of unity in ${\mathbb C}^*$. 
If $z \in {\mathbb C}^*$ is a root of unity, let $o(z)$ denote its multiplicative order in ${\mathbb C}^*$.
We measure the $2$-divisibility of $o(z_i)$ in the next definition.  
 
\begin{definition} \label{Des}
Let $e_i={\rm ord}_2(o(z_i))$.
The \emph{$2$-valuation vector} of $A/K$ is the multiset 
$\underline{e}=\underline{e}(A/K):= \{e_1, \ldots, e_g\}$.
The notation $\underline{e}=\{e\}$ means that $e_i=e$ for $1 \leq i \leq g$.
\end{definition} 

Write $o(z_i) =2^{e_i} c_i$ with $c_i$ odd. 
Then $z_i^m=-1$ for some $m \in \NN$ if and only if $e_i \geq 1$. Also:
\begin{eqnarray}
{\rm ord}_2(o(z) ) = 1 \Leftrightarrow {\rm ord}_2(o(-z) ) = 0;
\ {\rm if \ } {\rm ord}_2(o(z) ) \geq 2, {\rm \ then \ } {\rm ord}_2(o(-z) ) \geq 2; \label{negate} \\
\text{
If $r$ is odd, then $\underline{e} \not = \{0\}, \{1\}$, because 
$P(A/K,T) \in \ZZ[T]$.} \label{no01rodd}
\end{eqnarray}

\begin{remark}
For the $\FF_q$-parity, note that $\delta(A)=1$ if and only if $\underline{e} = \{e\}$ with $e \geq 1$ (or $e \geq 2$ when $r$ is odd).
%KEEP and $\delta(A)=-1$ if and only if $\underline{e} = \{e\}$ with $e \leq 1$ if $r$ odd and $e=0$ if $r$ even, 
%or not all $e_i$ are the same.
For the $\FF_q$-period, write $\mu(A)=2^E\bar{\mu}$ where $\bar{\mu}$ is odd.  If $\underline{e} = \{e\}$, 
then $E={\rm max}(e-1,0)$. If $\underline{e}$ is not constant, then $E={\rm max} \{e_i \mid 1 \leq i \leq g\}$. 
\end{remark}

%KEEP: This follows from the fact that $\mu$ is the smallest number such that $q^{\mu/2} \in \ZZ$ and $\{z_i^\mu\}=\{1\}$ or $\{z_i^\mu\}=\{-1\}$.

\begin{lemma} \label{typetoei1}
Let $\underline{e}=\underline{e}(A/K)$.
\begin{enumerate}
\item If $A/K$ is fully maximal, then (i) $\underline{e}=\{e\}$ with $e \geq 2$;
\item If $A/K$ is fully minimal, then (ii) the $e_i$ are not all equal;
\item If (iii) $\underline{e} =\{e\}$ with $e \in \{0,1\}$ 
and $r$ is even, then $A/K$ is mixed. 
\end{enumerate}
\end{lemma}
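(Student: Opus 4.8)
The plan is to reduce all three parts to two ingredients: the dictionary between the $\FF_q$-parity $\delta$ and the multiset $\underline{e}$ recorded in the Remark preceding the statement, together with the behaviour of $\underline{e}$ under the twist $A_\iota$ of Definition \ref{iota}. First I would record the effect of twisting by $\iota$ on the normalized Weil numbers. Since $\iota$ corresponds to $[-1]$ on the Tate module and is defined over $K$, Proposition \ref{frobtwist} (applied with $G_\phi = \iota$) shows that the relative Frobenius $\pi'$ of $A_\iota$ is conjugate to $\pi_A \circ \iota^{-1}$, whose corresponding operator on $T_\ell(A) \otimes \QQ_\ell$ is $-\pi_A$. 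Hence $\pi'$ and $-\pi_A$ have the same characteristic polynomial, so $\NWNs(A_\iota/K) = \{-z_1, -\bar{z}_1, \ldots, -z_g, -\bar{z}_g\}$; that is, twisting by $\iota$ \emph{negates} every normalized Weil number. Feeding this through \eqref{negate} shows that passing from $\underline{e}(A/K)$ to $\underline{e}(A_\iota/K)$ interchanges the entries equal to $0$ and $1$ and fixes the set of entries that are $\geq 2$.

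With this negation rule in hand, Part (1) falls out quickly. If $A/K$ is fully maximal, then both the trivial twist $A$ and the twist $A_\iota$ lie in $\Theta(A/K)$ and have parity $\delta = 1$; by the Remark this forces both $\underline{e}(A/K)$ and $\underline{e}(A_\iota/K)$ to be constant multisets $\{e\}$ and $\{e'\}$ with $e, e' \geq 1$. The negation rule is incompatible with $e = 1$ (which would force $e' = 0$), so $e \geq 2$, giving (i).

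For Part (2) I would argue by contradiction: suppose $A/K$ is fully minimal yet $\underline{e}(A/K) = \{e\}$ is constant. When $r$ is odd, \eqref{no01rodd} rules out $e \in \{0,1\}$, so $e \geq 2$, and the Remark then gives $\delta(A) = 1$, contradicting full minimality. When $r$ is even, full minimality forces $\delta(A) = -1$, so $e = 0$; but then the negation rule gives $\underline{e}(A_\iota/K) = \{1\}$ and hence $\delta(A_\iota) = 1$, again contradicting that every twist has parity $-1$. Either way $\underline{e}$ is non-constant, which is (ii). Part (3) is the same computation run constructively: if $\underline{e}(A/K) = \{e\}$ with $e \in \{0,1\}$ and $r$ is even, then the Remark and the negation rule show that $A$ and $A_\iota$ have opposite parities (the swap $\{0\} \leftrightarrow \{1\}$), exhibiting the two twists of Definition \ref{type} required for $A/K$ to be mixed.

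The only genuinely delicate step is the first one: verifying that twisting by $\iota$ negates the normalized Weil numbers and then translating this into the $0 \leftrightarrow 1$ swap on $\underline{e}$ via \eqref{negate}. Everything after that is bookkeeping with the parity dictionary, provided one keeps the $r$ even and $r$ odd cases separate, the latter leaning on \eqref{no01rodd} to exclude the constant multisets $\{0\}$ and $\{1\}$.
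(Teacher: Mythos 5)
Your proposal is correct and follows essentially the same route as the paper: both arguments combine the dictionary between the parity $\delta$ and the constancy of $\underline{e}$ with the fact that the twist $A_\iota$ negates the normalized Weil numbers, so that \eqref{negate} swaps the values $0$ and $1$ in $\underline{e}$ (the paper leaves the negation step implicit, while you justify it via Proposition \ref{frobtwist}). The only cosmetic difference is that you prove part (3) directly by exhibiting $A$ and $A_\iota$ as twists of opposite parity, whereas the paper phrases it as the contrapositive of parts (1) and (2).
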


%KEEP If $r$ is odd, then $\underline{e}=\{0\}$ and $\underline{e}=\{1\}$ cannot occur, 
%since then $P(A/K,T$) would not have integer coefficients.

\begin{proof}
\begin{enumerate}
\item If $A/K$ is fully maximal, then it has $K$-parity $+1$; so 
$\underline{e}=\{e\}$ for some $e \geq 1$ (with $e \geq 2$ if $r$ is odd by \eqref{no01rodd}). 
Suppose that $r$ is even and $\underline{e} = \{1\}$.  By \eqref{negate}, 
the twist $A_{\iota}$ has the property that $\underline{e}=\{0\}$.
So $A_\iota$ has $K$-parity $-1$, which contradicts the fact that $A/K$ is fully maximal.
Thus condition (i) holds.

\item 
If $A/K$ is fully minimal, then it has $K$-parity $-1$. 
By \eqref{no01rodd}, 
either $\underline{e}=\{0\}$ with $r$ even
or the $e_i$ are not all the same. 
If $r$ is even and $\underline{e}=\{0\}$, then the twist by $\iota$ is maximal, giving a contradiction.
Thus condition (ii) holds.

\item This is the contrapositive of parts (1) and (2).
\end{enumerate}
\end{proof}

\begin{proposition} \label{Prevennotfmin}
If $A/\FF_q$ is simple and $q=p^r$ with $r$ even, then $A/\FF_q$ is not fully minimal.
\end{proposition}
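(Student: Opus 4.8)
The plan is to reduce the statement to the contrapositive of Lemma~\ref{typetoei1}(2): since a fully minimal $A/K$ must have $2$-valuation vector $\underline{e}(A/K)$ with entries not all equal, it suffices to prove that under the hypotheses (simple, $r$ even) the vector $\underline{e}(A/\FF_q)$ is in fact constant.

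First I would exploit the simplicity of $A$. For a simple abelian variety $A/\FF_q$, Honda--Tate theory gives that the characteristic polynomial factors as $P(A/\FF_q,T)=m_\pi(T)^e$, where $m_\pi(T)$ is the (irreducible) minimal polynomial of the Frobenius $\pi$ over $\QQ$ and $e \cdot \deg m_\pi = 2g$. Consequently the distinct Weil numbers $\alpha_i$ are precisely the roots of $m_\pi$, and irreducibility of $m_\pi$ means these form a single orbit under $\mathrm{Gal}(\overline{\QQ}/\QQ)$.

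Next I would bring in the parity hypothesis. Because $r$ is even, $\sqrt{q}=p^{r/2}\in\ZZ$, so each normalized Weil number is $z_i=\alpha_i/p^{r/2}$, a fixed rational multiple of a Weil number. Since every element of $\mathrm{Gal}(\overline{\QQ}/\QQ)$ fixes $p^{r/2}$, the set of distinct $z_i$ is again a single Galois orbit. As $A$ is supersingular, the $z_i$ are roots of unity by Theorem~\ref{propssAV}, and Galois-conjugate roots of unity share the same multiplicative order. Hence $o(z_i)$ is independent of $i$, and therefore so is $e_i=\mathrm{ord}_2(o(z_i))$; that is, $\underline{e}(A/\FF_q)=\{e\}$ is constant. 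Applying the contrapositive of Lemma~\ref{typetoei1}(2) then yields that $A/\FF_q$ is not fully minimal.

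The crux of the argument — and the step I expect to require the most care — is the passage from simplicity to ``the normalized Weil numbers form a single Galois orbit.'' This rests on the standard but essential fact that the characteristic polynomial of a simple abelian variety is a power of an irreducible polynomial, combined with the observation that evenness of $r$ forces $\sqrt{q}$ to be rational, so that normalization does not break up Galois orbits. Everything downstream (equal orders, constant $\underline{e}$) is then immediate, and the conclusion is purely formal via Lemma~\ref{typetoei1}. One point to verify carefully is that the orbit includes the complex conjugates $\bar z_i$ as well, but this is automatic since $P(A/\FF_q,T)\in\ZZ[T]$ makes complex conjugation a Galois symmetry, so no entry of $\underline{e}$ escapes the single-orbit conclusion.
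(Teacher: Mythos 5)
Your proposal is correct and follows essentially the same route as the paper: simplicity makes the Weil numbers a single Galois orbit, evenness of $r$ makes $\sqrt{q}$ rational so the normalized Weil numbers remain a single orbit of roots of unity of a common order $n$, hence $\underline{e}=\{e\}$ is constant and Lemma~\ref{typetoei1}(2) applies. The paper phrases the orbit as the values $\sqrt{q}\,\zeta_n^j$ for $j\in(\ZZ/n\ZZ)^*$ rather than citing Honda--Tate for $P=m_\pi^e$, but the content is identical.
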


\begin{proof}
If $A/\FF_q$ is simple, the Weil numbers $\{\sqrt{q} z_i\}$ are all conjugate over $\QQ$. 
Let $n=o(z_1) $ and $e={\rm ord}_2(n)$.
Since $r$ is even, $\sqrt{q} \in \QQ$, so the conjugates of $\sqrt{q} z_1$ 
are precisely the $\phi(n)$ values $\sqrt{q}\zeta_{n}^j$ for $j \in (\ZZ/n\ZZ)^*$. 
So $\underline{e}=\{e\}$.
By Lemma \ref{typetoei1}(2), $A/\FF_q$ is not fully minimal.
\end{proof}

\subsection{Types of abelian varieties with small automorphism group}

\begin{corollary} \label{typetoei2}
Suppose that $\vert{\rm Aut}_k(A)\vert =2$. 
Then 
\begin{enumerate}
\item $A/K$ is fully maximal if and only if (i) $\underline{e}=\{e\}$ with $e \geq 2$;
\item $A/K$ is fully minimal if and only if (ii) the $e_i$ are not all equal;
\item $A/K$ is mixed if and only if (iii) $\underline{e} =\{e\}$ with $e \in \{0,1\}$ 
and $r$ is even. 
\end{enumerate}
\end{corollary}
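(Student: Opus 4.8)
The plan is to leverage the hypothesis $\vert \mathrm{Aut}_k(A) \vert = 2$ to upgrade the three one-directional implications of Lemma~\ref{typetoei1} into biconditionals. The key structural observation is that when $\mathrm{Aut}_k(A) = \{\mathrm{id}, g\}$ with $g$ of order $2$, there are exactly two $K$-Frobenius conjugacy classes possible, and hence at most two twists by Proposition~\ref{bij1}: the trivial twist corresponding to $A$ itself, and the twist corresponding to $g$. Since $g$ has order $2$ and is automatically central in a two-element group, I would argue that $g$ must in fact be the automorphism $\iota$ of Definition~\ref{iota}: indeed $\iota$ is a nontrivial element of $\mathrm{Aut}_k(A)$ (as $[-1] \neq \mathrm{id}$ on the Tate module), so in a group of order $2$ it is the unique nonidentity element. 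Thus the only nontrivial twist of $A/K$ is precisely $A_\iota$.

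With this identification in hand, the three equivalences follow by comparing the parity of $A$ with that of $A_\iota$. First I would recall from the discussion after Definition~\ref{iota} (via Proposition~\ref{frobtwist}) that twisting by $\iota$ negates every normalized Weil number, so $\NWNs(A_\iota/K) = \{-z_1, -\bar z_1, \ldots, -z_g, -\bar z_g\}$. The relations in \eqref{negate} then describe exactly how $\underline{e}(A_\iota/K)$ relates to $\underline{e}(A/K)$ entry by entry: if $e_i = 1$ then the corresponding valuation for $A_\iota$ is $0$, if $e_i = 0$ it becomes $1$, and if $e_i \geq 2$ it stays $\geq 2$. Since the type of $A/K$ is determined purely by the parities of the two twists $A$ and $A_\iota$, I can read off each case directly.

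Concretely: for (1), Lemma~\ref{typetoei1}(1) gives the forward direction, and for the converse I would observe that if $\underline{e}(A/K) = \{e\}$ with $e \geq 2$, then by \eqref{negate} each negated Weil number still has $\mathrm{ord}_2 \geq 2$, so $\underline{e}(A_\iota/K) = \{e'\}$ with all $e' \geq 2$; hence both $A$ and $A_\iota$ have parity $+1$, and since these are the only two twists, $A/K$ is fully maximal. For (2), the forward direction is Lemma~\ref{typetoei1}(2); conversely, if the $e_i$ are not all equal, then $A$ has parity $-1$, and I must check that $A_\iota$ also has parity $-1$ — the negation map \eqref{negate} sends a non-constant $\underline{e}$ to a non-constant multiset (swapping values $0 \leftrightarrow 1$ and fixing the set $\{e \geq 2\}$ cannot homogenize a tuple that was already mixed, a small case-check I would spell out), so $A_\iota$ is not fully-maximal-type either, giving fully minimal. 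Case (3) is then the remaining possibility and follows as the complement, or directly from Lemma~\ref{typetoei1}(3) together with the observation that $\{e\}$ with $e \in \{0,1\}$ forces $A$ and $A_\iota$ to have opposite parities.

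The main obstacle I anticipate is the converse of part (2): verifying that a non-constant $\underline{e}(A/K)$ yields a non-constant $\underline{e}(A_\iota/K)$ under the componentwise negation rule, so that $A_\iota$ genuinely fails to be fully maximal and the twist pair is forced into the fully minimal type rather than mixed. This requires a careful but elementary case analysis of how \eqref{negate} acts on the possible values $\{0, 1, \geq 2\}$, ruling out the scenario where negation accidentally produces a constant multiset. Everything else reduces to the already-established machinery: the twist count from Proposition~\ref{bij1}, the identification of the nontrivial twist as $A_\iota$, and the valuation bookkeeping of \eqref{negate}.
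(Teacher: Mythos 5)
Your proposal is correct and follows essentially the same route as the paper's proof: one set of implications from Lemma~\ref{typetoei1}, identification of the unique nontrivial twist as $A_\iota$, and the observation that negation of the normalized Weil numbers (via \eqref{negate}) preserves each of the conditions (i), (ii), (iii) on $\underline{e}$. The case analysis you flag as the ``main obstacle'' is exactly what the paper compresses into its final sentence, and it goes through because negation acts injectively on the possible values of $e_i$ (swapping $0$ and $1$, fixing values $\geq 2$), so it cannot turn a non-constant multiset into a constant one.
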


\begin{proof}
One set of implications is Lemma \ref{typetoei1}. 
Conversely,
if $\vert{\rm Aut}_k(A)\vert=2$, then $A_K$ has at most one nontrivial twist, which is $A_\iota$.
Thus, $A/K$ is fully maximal (resp.\ fully minimal) if and only if $A$ and $A_\iota$ both have $K$-parity $+1$ (resp.\ $-1$).  
The result follows because negation of $\{z_i\}$ preserves
each of the conditions $(i), (ii), (iii)$ for $\underline{e}$, by \eqref{negate}. 
\end{proof}

By Corollary \ref{typetoei2}, if $\vert{\rm Aut}_k(A)\vert =2$, then 
the type of $A/K$ is preserved under odd degree extensions of $K$.

\begin{remark} \label{Ronemoduli}
Let $S$ be an irreducible component of the supersingular locus of the moduli space of principally polarized abelian varieties 
of dimension $g$.  Among the abelian varieties $A$ represented by $\FF_{q}$-points of $S$, the typical structure of
${\rm Aut}_k(A)$ is not known in general.
% KEEP nor is it known how this structure 
%depends on the characteristic $p$ or the choice of component $S$. 
For $g \geq 2$ and $p$ odd, one might expect that typically ${\rm Aut}_k(A) \simeq \ZZ/2\ZZ$. 
For $g=2$ and $p$ odd, we prove that this is true in Proposition \ref{Pmaincor}.
%(When $g=2$ and $p=2$, the statement is false \cite[Theorem 3.1]{VdGVdV92}.)
\end{remark}

\begin{remark}
Let $S$ and $A$ be as in Remark \ref{Ronemoduli} and $K=\FF_q$.
If $p$ is odd, one expects the proportion of $A$ with $\ZZ/2\ZZ \times \ZZ/2\ZZ \subset {\rm Aut}_K(A)$ to be small.
The reason is that if $\ZZ/2\ZZ \times \ZZ/2\ZZ \subset {\rm Aut}_K(A)$, then $A$ is not simple over $K$ by \cite[Theorem B]{kanirosen}.
So this condition implies that 
the $a$-number of $A$ is at least two, by \cite[Proposition 4]{glasspries}. 
However, for all $g$ and $p$, it is known that $A$ generically has $a$-number $1$ \cite[Section 4.9]{lioort}. 
\end{remark}

\subsection{Parity-changing twists of abelian varieties} \label{SparityAV} \mbox{}\\

Suppose that
$A'/K \in \Theta(A/K)$ is a $K$-twist of $A$ of order $T$.
Then there is an isomorphism 
$\phi: A \times_{K} {K_T} \xrightarrow{\simeq} A' \times_{K} {K_T}$ defined over $K_T$.
Denote $\NWNs(A/K) = \{z_i, \bar{z}_i\}_{1\leq i \leq g}$ and $\NWNs(A'/K)= \{w_i,\bar{w}_i\}_{1 \leq i \leq g}$. 
After possibly reordering, $z_i^T = w_i^T$ and hence
\begin{equation}\label{rela}
w_i = \lambda_i z_i
\end{equation}
for some (not necessarily primitive) $T$-th root of unity $\lambda_i$.
Let $t ={\rm lcm}\{o(\lambda_i)  \mid 1 \leq i \leq g\}$. 

By definition, $t \mid T$.
In particular, if $A'/K$ is a trivial twist, then $t=1$ and $z_i = w_i$ for all $i$.
If $t \not = T$, it means that $A$ and $A'$ are isogenous but not isomorphic over $\FF_{q^t}$. 
Conversely, if \eqref{rela} holds, then $A$ and $A'$ are isogenous, but not necessarily isomorphic, over $K_T$.
 
\begin{lemma} \label{Ltwistsimple}
Let $\underline{e}$ be the $2$-valuation vector of $A/K$.
Suppose that $A'/K$ is a $K$-twist of $A/K$ of order $T$.  Let $\epsilon = {\rm ord}_2(T)$.
If $\epsilon < {\rm min}\{e_i \mid 1 \leq i \leq g\}$, then 
$\underline{e}(A'/K)=\underline{e}$.
\end{lemma}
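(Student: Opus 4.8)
The plan is to reduce the statement to a purely arithmetic fact about $2$-adic valuations of orders of roots of unity, exploiting the relation \eqref{rela} between the normalized Weil numbers of $A$ and $A'$.

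First I would unpack the hypotheses. Since $A'/K$ has order $T$, equation \eqref{rela} provides (after reordering) a matching $w_i = \lambda_i z_i$, where each $\lambda_i$ is a $T$-th root of unity and $\{w_i, \bar w_i\}_{1 \le i \le g} = \NWNs(A'/K)$. Because $o(\lambda_i) \mid T$, we have ${\rm ord}_2(o(\lambda_i)) \leq {\rm ord}_2(T) = \epsilon$, and by hypothesis $\epsilon < e_i$ for every $i$. Since $\underline{e}(A'/K)$ is by definition the multiset $\{{\rm ord}_2(o(w_i)) \mid 1 \le i \le g\}$, it suffices to show ${\rm ord}_2(o(w_i)) = e_i$ for each $i$ under this matching.

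The heart of the argument is the elementary claim: if $z$ and $\lambda$ are roots of unity with ${\rm ord}_2(o(\lambda)) < {\rm ord}_2(o(z)) = e$, then ${\rm ord}_2(o(\lambda z)) = e$. To prove it, let $N = {\rm lcm}(o(z), o(\lambda))$ and write $N = 2^e s$ with $s$ odd; note ${\rm ord}_2(N) = \max({\rm ord}_2(o(z)), {\rm ord}_2(o(\lambda))) = e$. Both $z$ and $\lambda$ lie in the group $\mu_N$ of $N$-th roots of unity, and the CRT isomorphism $\mu_N \cong \mu_{2^e} \times \mu_s$ separates the $2$-primary part, so the $2$-part of $o(\lambda z)$ is computed in $\mu_{2^e}$. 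Writing $\mu_{2^e} \cong \ZZ/2^e\ZZ$ additively, the $2$-primary component of $z$ corresponds to a unit $u$ (as ${\rm ord}_2(o(z)) = e$ makes it a generator of $\mu_{2^e}$), while the $2$-primary component of $\lambda$ corresponds to some $v$ with ${\rm ord}_2(v) = e - {\rm ord}_2(o(\lambda)) \geq e - \epsilon \geq 1$. Hence ${\rm ord}_2(u + v) = \min({\rm ord}_2(u), {\rm ord}_2(v)) = 0$, so $u + v$ is again a unit and $\lambda z$ has $2$-primary part of order $2^e$, giving ${\rm ord}_2(o(\lambda z)) = e$.

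Applying the claim with $z = z_i$ and $\lambda = \lambda_i$ (legitimate because ${\rm ord}_2(o(\lambda_i)) \leq \epsilon < e_i$) yields ${\rm ord}_2(o(w_i)) = e_i$ for all $i$, whence $\underline{e}(A'/K) = \underline{e}$ as multisets. I do not anticipate a serious obstacle: the substantive content is the arithmetic claim above, and the only points requiring care are that \eqref{rela} matches the $w_i$ and $z_i$ only up to reordering (harmless, since $\underline{e}(A'/K)$ is defined as a multiset) and that the $\lambda_i$ need not be \emph{primitive} $T$-th roots of unity (so one must use only $o(\lambda_i) \mid T$, rather than equality, to bound ${\rm ord}_2(o(\lambda_i))$).
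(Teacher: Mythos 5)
Your proposal is correct and follows essentially the same route as the paper: the paper's proof likewise reduces to the inequality $\mathrm{ord}_2(o(w_i)) \leq \max(\mathrm{ord}_2(o(\lambda_i)), \mathrm{ord}_2(o(z_i)))$ with equality when the two values differ, together with $\mathrm{ord}_2(o(\lambda_i)) \leq \mathrm{ord}_2(T) = \epsilon$. The only difference is that you prove this ultrametric-type fact explicitly via the decomposition $\mu_N \cong \mu_{2^e} \times \mu_s$, whereas the paper simply asserts it; your handling of the reordering and of non-primitive $\lambda_i$ is also correct.
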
 
 
\begin{proof}
If $w_i = \lambda_i z_i$,
then
$\mathrm{ord}_2(o(w_i) ) \leq \max (\mathrm{ord}_2(o(\lambda_i) ) ,\mathrm{ord}_2(o(z_i) ) )$, with equality if the two values are not equal.
Then ${\rm ord}_2(o(\lambda_i) ) \leq {\rm ord}_2(T) = \epsilon$ so the hypothesis implies that ${\rm ord}_2(o(w_i)) = {\rm ord}_2(o(z_i))$.
\end{proof}
 
\begin{proposition} \label{Ptwistodd}
Suppose that
$A/K$ has $K$-period $M$ and $K$-parity $+1$
and its $K$-twist $A'/K$ has $K$-period $N$ and $K$-parity $-1$.
Let $e_M={\rm ord}_2(M)$ and $e_N={\rm ord}_2(N)$.
If $e_N \leq e_M$, then ${\rm ord}_2(t) = 1 + e_M$;
if $e_N > e_M$, then ${\rm ord}_2(t) = e_N$.
\end{proposition}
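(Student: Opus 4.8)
The plan is to translate the period--parity hypotheses into statements about the $2$-valuation vectors of $A$ and $A'$, and then to compute $\mathrm{ord}_2(t)$ directly from the relation $w_i = \lambda_i z_i$ of \eqref{rela} by passing to the $2$-primary parts of the roots of unity involved.

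First I would record what the hypotheses say about $\underline{e} = \underline{e}(A/K) = \{e_i\}$ and $\underline{f} := \underline{e}(A'/K) = \{f_i\}$. Since $A/K$ has $K$-parity $+1$, the remark following Definition~\ref{Des} forces $\underline{e} = \{e\}$ to be constant with $e \geq 1$ and $\mathrm{ord}_2(M) = \max(e-1,0) = e-1$; hence $e = 1 + e_M$. Since $A'/K$ has $K$-parity $-1$, the vector $\underline{f}$ is either non-constant, so that $e_N = \max_i f_i$, or equal to $\{0\}$, which by \eqref{no01rodd} forces $r$ even and gives $e_N = 0$. In all cases $f_i \leq e_N$ for every $i$.

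Next comes the core computation. Since $\mathrm{ord}_2(t) = \max_i \mathrm{ord}_2(o(\lambda_i))$ and the $2$-valuation of the order of a root of unity depends only on its $2$-primary part, I would replace $z_i, w_i, \lambda_i$ by their $2$-primary parts and work inside $\QQ_2/\ZZ_2$, where $w_i = \lambda_i z_i$ still holds. Writing these $2$-parts additively, so that $z_i$ has exact order $2^e$ and $w_i$ exact order $2^{f_i}$, a short valuation computation on a common denominator gives the key fact: if $f_i \neq e$ then $\mathrm{ord}_2(o(\lambda_i)) = \max(f_i,e)$, while if $f_i = e$ then $\mathrm{ord}_2(o(\lambda_i)) \leq e-1$. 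Here $e \geq 1$ is used to know the $2$-part of $z_i$ is nontrivial. Assembling the maximum: if $e_N \leq e_M = e-1$, then every $f_i \leq e-1 < e$, so $\mathrm{ord}_2(o(\lambda_i)) = e$ for all $i$ and $\mathrm{ord}_2(t) = e = 1+e_M$; if $e_N > e_M$, then $e_N \geq e$, so $\underline f$ is non-constant with $e_N = \max_i f_i$, and choosing $i$ with $f_i = e_N$ (so $f_i > e$) gives $\mathrm{ord}_2(o(\lambda_i)) = e_N$ while no index exceeds $e_N$, whence $\mathrm{ord}_2(t) = e_N$.

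The main obstacle is the boundary subcase $f_i = e$, where the order of $\lambda_i$ drops strictly below $e$ so that the naive maximum is not attained at that index; this matters precisely when $e_N = e$. There I would use that $A'/K$ has parity $-1$, i.e.\ that $\underline f$ is non-constant with maximum $e$: non-constancy produces an index $j$ with $f_j < e$, and the key fact gives $\mathrm{ord}_2(o(\lambda_j)) = e = e_N$, restoring the maximum. A secondary technical point is the bookkeeping of the ``$q^m$ square'' condition in the definition of the period when $r$ is odd; this is what rules out $\underline f = \{0\}$ for odd $r$ and keeps $e = 1+e_M$ and $e_N = \max_i f_i$ valid, which I would confirm using \eqref{no01rodd}.
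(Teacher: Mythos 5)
Your proof is correct and follows essentially the same route as the paper's: both reduce to the $2$-valuations of $o(z_i)$, $o(w_i)$, $o(\lambda_i)$, handle the case $e_N\le e_M$ by noting every $\lambda_i$ then has $2$-valuation exactly $1+e_M$, and in the case $e_N>e_M$ split on whether $e_N$ exceeds $1+e_M=e$ or equals it, using in the boundary subcase that the parity $-1$ of $A'$ forces $\underline{e}(A'/K)$ to be non-constant and hence supplies an index with $\mathrm{ord}_2(o(w_i))<e_N$. The only difference is cosmetic: the paper derives the first case by raising $w_i=\lambda_i z_i$ to the power $L=\mathrm{lcm}(M,N)$ to get $\lambda_i^L=-1$, whereas you phrase everything through the $2$-valuation vectors; your explicit treatment of the $f_i=e$ drop (quotient of two roots of unity of the same exact $2$-power order) is a welcome clarification of the same point.
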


\begin{proof}
Write $L={\rm lcm}(M,N)$.
Recall that $z_i^M=-1$ and $w_i^N = 1$ for $1 \leq i \leq g$.

Suppose that $e_N \leq e_M$.  Then $\ell_2=L/M$ is odd and ${\rm ord}_2(L)=e_M$.  
Then
\[1=w_i^L = \lambda_i^{L} z_i^L=  \lambda_i^{L} (z_i^M)^{\ell_2} = \lambda_i^{L} (-1)^{\ell_2}.\]
This implies that $\lambda_i^{L} = -1$ and so
${\rm ord}_2(o(\lambda_i) ) = 1 + e_M$ for $1 \leq i \leq g$.

Suppose that $e_M < e_N$.
For $1 \leq i \leq g$, then ${\rm ord}_2(o(z_i) ) = 1+e_M$ 
and ${\rm ord}_2(o(w_i) ) \leq e_N$.
The equation $w_i = \lambda_i z_i$ implies that 
${\rm ord}_2(o(\lambda_i) ) \leq e_N$ for $1 \leq i \leq g$.
To show that ${\rm ord}_2(t) = e_N$, it thus
suffices to show that ${\rm ord}_2(o(\lambda_i) ) = e_N$ for some $i$.

When $e_M < e_N$, then $rN/2$ is even, because $rM$ is even by definition of the period. 
So if $r$ is odd, then $e_N > e_M \geq 1$.
By the minimality of $N$ (such that $rN$ is even), 
it cannot hold that $w_i^{N/2} = 1$ for all $i$.
Thus, there is at least one value $i_0$ such that 
${\rm ord}_2(o(w_{i_0}) ) = e_N$.
Furthermore, since the $K$-parity is $-1$, 
it is not true that $w_i^{N/2} = -1$ for all $i$.
So there is at least one value $i_1$ such that 
${\rm ord}_2(o(w_{i_1}) ) < e_N$.

Note that $z_{i} = \lambda_{i}^{-1} w_{i}$.
If $e_N > 1+ e_M$, then substituting $i=i_0$ shows that 
${\rm ord}_2(o(\lambda_{i_0}^{-1}) ) = e_N$.  
If $e_N=1+e_M$, then substituting $i=i_1$ shows
${\rm ord}_2(o(\lambda_{i_1}^{-1}) ) = 1+e_M$.  
\end{proof}

\begin{corollary}\label{Ctwistodd}
\begin{enumerate}
\item Suppose that $A/K$ has $K$-period $M$ and $K$-parity $+1$.
If $A'/K$ is a $K$-twist of order $T$ with ${\rm ord}_2(T) \leq e_M$, 
then $A'/K$ also has $K$-parity $+1$.

\item Suppose that $A'/K$ has $K$-period $N$ and $K$-parity $-1$.
If $A/K$ is a twist of order $T$ with either ${\rm ord}_2(T) < e_N$
or ${\rm ord}_2(T) = e_N=0$, 
then $A/K$ also has $K$-parity $-1$.

\item In particular, if $A/K$ and $A'/K$ have different $K$-parities, then $T$ is even.
\end{enumerate}
\end{corollary}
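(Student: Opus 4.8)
The plan is to derive all three parts of Corollary~\ref{Ctwistodd} directly from Proposition~\ref{Ptwistodd}, which has already computed ${\rm ord}_2(t)$ in every case where a parity-$+1$ variety and a parity-$-1$ variety are twists of one another. The key observation is that $t \mid T$, so ${\rm ord}_2(t) \leq {\rm ord}_2(T)$; contrapositively, any lower bound on ${\rm ord}_2(t)$ forced by the parity mismatch becomes a lower bound on ${\rm ord}_2(T)$, which we then contradict with the hypothesis on $T$.

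For part (1), I would argue by contradiction: suppose $A/K$ has $K$-parity $+1$ with period $M$, but its order-$T$ twist $A'/K$ has $K$-parity $-1$, say with period $N$ and $e_N = {\rm ord}_2(N)$. Proposition~\ref{Ptwistodd} then applies and gives ${\rm ord}_2(t) = 1 + e_M$ when $e_N \leq e_M$, and ${\rm ord}_2(t) = e_N > e_M$ when $e_N > e_M$; in either case ${\rm ord}_2(t) > e_M$. Since $t \mid T$, this forces ${\rm ord}_2(T) > e_M$, contradicting the hypothesis ${\rm ord}_2(T) \leq e_M$. Hence $A'/K$ must also have parity $+1$. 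Part (2) is symmetric: assuming for contradiction that the order-$T$ twist $A/K$ of the parity-$-1$ variety $A'/K$ has parity $+1$, I reverse the roles (so $A/K$ plays the role of the ``$+1$'' variety with some period $M$ and the given $A'/K$ is the ``$-1$'' variety with period $N$) and again invoke Proposition~\ref{Ptwistodd}. When $e_M < e_N$ it gives ${\rm ord}_2(t) = e_N$, forcing ${\rm ord}_2(T) \geq e_N$ and contradicting ${\rm ord}_2(T) < e_N$; the boundary subcase ${\rm ord}_2(T) = e_N = 0$ needs the other branch, where $e_N \leq e_M$ yields ${\rm ord}_2(t) = 1 + e_M \geq 1 > 0 = {\rm ord}_2(T)$, again contradicting $t \mid T$.

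For part (3), I would simply note that if $A/K$ and $A'/K$ have different parities, then one has parity $+1$ and the other parity $-1$, so Proposition~\ref{Ptwistodd} applies and gives ${\rm ord}_2(t) \geq 1$ in every case ($1 + e_M \geq 1$ when $e_N \leq e_M$, and $e_N > e_M \geq 0$ so $e_N \geq 1$ when $e_N > e_M$). Thus $t$ is even, and since $t \mid T$, so is $T$.

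The main obstacle I anticipate is handling the degenerate boundary case in part (2) where ${\rm ord}_2(T) = e_N = 0$, since here the strict inequality of the generic case collapses and one must instead use the $e_N \leq e_M$ branch of Proposition~\ref{Ptwistodd} to produce the needed factor of $2$ from $1 + e_M$ rather than from $e_N$. Apart from this bookkeeping, everything reduces to the divisibility $t \mid T$ and a careful reading of which branch of the preceding proposition applies, so no new ideas beyond Proposition~\ref{Ptwistodd} are required.
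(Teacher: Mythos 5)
Your proof is correct and takes essentially the same route as the paper's: both deduce all three parts from Proposition~\ref{Ptwistodd} combined with the divisibility $t \mid T$, arguing by contradiction in parts (1) and (2) (the paper phrases part (3) as the contrapositive of (1)--(2) rather than re-invoking the proposition, but this is cosmetic). The only exposition point worth tightening is that in part (2) the combination ``${\rm ord}_2(T) < e_N$ with $e_N \leq e_M$'' should also be noted, where ${\rm ord}_2(t) = 1+e_M \geq 1+e_N > {\rm ord}_2(T)$ gives the same contradiction.
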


\begin{proof}
%For items (1) and (2), we assume that $\delta(A) \not = \delta(A')$
%and find a contradiction. 
Note that ${\rm ord}_2(T) \geq {\rm ord}_2(t)$.
\begin{enumerate}
\item Assume that $A'/K$ has parity $-1$. 
By Proposition \ref{Ptwistodd},
${\rm ord}_2(t) = 1+e_M$ if $e_N \leq e_M$ and ${\rm ord}_2(t) = e_N$ if $e_N > e_M$.
So ${\rm ord}_2(T) > e_M$, which is a contradiction.

\item Assume that $A/K$ has parity $1$.
Applying Proposition \ref{Ptwistodd} shows that 
${\rm ord}_2(t) = 1+e_M$ if $e_N \leq e_M$ and ${\rm ord}_2(t) = e_N$ if $e_N > e_M$.
This implies that either ${\rm ord}_2(T) \geq e_N$ or ${\rm ord}_2(T) > e_N=0$, which is a contradiction.

\item If $T$ is odd, then ${\rm ord}_2(T) =0$.
The hypotheses of items (1) and (2) are satisfied and so $A/K$ and $A'/K$ have the 
same parity.
\end{enumerate}
\end{proof}

\section{Fully maximal, fully minimal, and mixed curves} \label{S5}

Let $K = \FF_q$ with $q=p^r$ and let $k = \overline{\FF}_p$.
Let $X/K$ be a smooth projective connected supersingular curve of genus $g$.
The Jacobian $\mathrm{Jac}(X)$ of $X$
is a principally polarized abelian variety of dimension $g$.
If $X$ is hyperelliptic, let $\iota$ denote its hyperelliptic involution.

\subsection{Types for Jacobians}\mbox{}\\

The theory of twists of $X$ and definitions of the period and parity of $X$ are almost identical 
to those of ${\rm Jac}(X)$, as studied in Sections \ref{S3} and \ref{S4}.
The \NWN $\{z_i, \bar{z}_i\}_{1 \leq i \leq g}$ and the $2$-valuation vector
$\underline{e}=\{e_i ={\rm ord}_2(o(z_i))\}_{1 \leq i \leq g}$ 
are the same for $X$ and ${\rm Jac}(X)$. 
The main difference is that $X$ may have fewer twists than ${\rm Jac}(X)$.
Let $\Theta(X/K)$ denote the set of $K$-isomorphism classes of twists of $X/K$. 

\begin{definition}\label{Jactype}
A supersingular curve $X/K$
is of one of the following \emph{types} over K:
\begin{enumerate}
\item \emph{fully maximal} if $X'/K$ has $K$-parity $\delta=1$ for all $X' \in \Theta(X/K)$;
\item \emph{fully minimal} if $X'/K$ has $K$-parity $\delta=-1$ for all $X' \in \Theta(X/K)$;
\item \emph{mixed} if there exist $X', X'' \in \Theta(X/K)$ with $K$-parities $\delta(X')=1$ and $\delta(X'')=-1$.
\end{enumerate}
\end{definition}

By \cite[Appendice]{lauter}, ${\rm Jac}(X)$ has the same field of definition as $X$ and
\begin{equation}\label{algtor}
\mathrm{Aut}_k(\mathrm{Jac}(X)) \simeq
\begin{cases}
\mathrm{Aut}_k(X) \mbox{ if } X \mbox{ is hyperelliptic,}\\
\langle \iota \rangle \times \mathrm{Aut}_k(X) \mbox{ if } X \mbox{ is not hyperelliptic.}\\
\end{cases}
\end{equation}

When $X$ is hyperelliptic, then $\Theta(\mathrm{Jac}(X)/K) = \Theta(X/K)$, so $X$ and $\mathrm{Jac}(X)$ have the same type over $K$. When $X$ is not hyperelliptic, then $X$ and ${\rm Jac}(X)$ might have different types.

\begin{lemma}\label{xjac}
The types of $X$ and ${\rm Jac}(X)$ over $K$ are not the same if and only if:
$X$ is not hyperelliptic, ${\rm Jac}(X)$ is mixed over $K$, $r$ is even,
and $\underline{e}(X/K)=\{e\}$ with $e \leq 1$. 
\end{lemma}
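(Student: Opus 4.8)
The plan is to reduce to the non-hyperelliptic case and then track the single extra automorphism $\iota$ through the twist correspondence. First I would dispose of the hyperelliptic case: by \eqref{algtor}, when $X$ is hyperelliptic one has $\mathrm{Aut}_k(\mathrm{Jac}(X)) \simeq \mathrm{Aut}_k(X)$, so $\Theta(\mathrm{Jac}(X)/K) = \Theta(X/K)$ and corresponding twists share their Weil numbers; hence $X$ and $\mathrm{Jac}(X)$ have the same type. Thus non-hyperellipticity is forced whenever the types differ, which establishes the first listed condition and lets me assume $X$ non-hyperelliptic throughout.

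Next I would unwind the twist correspondence in the non-hyperelliptic case, where \eqref{algtor} gives $\mathrm{Aut}_k(\mathrm{Jac}(X)) \simeq \langle \iota \rangle \times \mathrm{Aut}_k(X)$ with $\iota$ central and defined over $K$ (Definition \ref{iota}). Since $\iota$ is $K$-rational and central, adjoining it to a $K$-Frobenius conjugacy class of $\mathrm{Aut}_k(X)$ yields a second, distinct class, so Proposition \ref{bij1} identifies $\Theta(\mathrm{Jac}(X)/K)$ with the disjoint union of the classes $\{\mathrm{Jac}(X')\}$ and their $\iota$-twists $\{\mathrm{Jac}(X')_\iota\}$ over $X' \in \Theta(X/K)$. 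By Proposition \ref{frobtwist} together with $\iota = [-1]$, twisting by $\iota$ negates all normalized Weil numbers, so by \eqref{negate} it sends the $2$-valuation vector of each twist to its image under the involution interchanging the values $0$ and $1$ and fixing values $\geq 2$. Comparing the parity sets defining the types (Definitions \ref{type}, \ref{Jactype}), the parity set of $\mathrm{Jac}(X)$ is that of $X$ enlarged by the parities of these $\iota$-twists; as the former is contained in the latter, the types differ precisely when the $\iota$-twists contribute a parity absent for $X$, equivalently when $X$ is \emph{not} mixed while $\mathrm{Jac}(X)$ \emph{is} mixed.

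It then remains to convert ``$X$ not mixed and $\mathrm{Jac}(X)$ mixed'' into ``$r$ even and $\underline{e}(X) = \{e\}$ with $e \leq 1$''. One half is clean: by \eqref{negate} the negation involution reverses a parity only when $r$ is even and the relevant $2$-valuation vector is $\{0\}$ or $\{1\}$, so the reversal witnessing $\mathrm{Jac}(X)$ mixed forces $r$ even; and conversely Lemma \ref{typetoei1}(3), applied to the abelian variety $\mathrm{Jac}(X)$ whose $2$-valuation vector equals $\underline{e}(X)$, shows that ``$r$ even and $\underline{e}(X) = \{e\}$ with $e \leq 1$'' already forces $\mathrm{Jac}(X)$ to be mixed. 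The residual point is to pin the parity data to the \emph{base} vector $\underline{e}(X)$ rather than to the vectors of individual twists, and to verify that $X$ itself is then not mixed.

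The hard part will be exactly this: showing that for non-hyperelliptic $X$ with $r$ even and $\underline{e}(X) = \{e\}$, $e \leq 1$, no \emph{curve} twist of $X$ reverses the parity dictated by $\underline{e}(X)$, so that $X$ is fully maximal (if $e=1$) or fully minimal (if $e=0$) while $\mathrm{Jac}(X)$ is mixed solely through the extra $\iota$-twist. This is where non-hyperellipticity is essential: the parity-reversing twist on the Jacobian is the $\iota$-twist, and $\iota \notin \mathrm{Aut}_k(X)$, so it is not realized by any curve twist. To make this rigorous I would invoke Corollary \ref{Ctwistodd}, which constrains any parity-changing twist to have even order with a prescribed value of $\mathrm{ord}_2$ of its order; combined with the fact that a genuine curve twist arises from $\mathrm{Aut}_k(X)$ and hence cannot act as $\iota$, this should exclude a parity-reversing curve twist under the stated hypotheses. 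Conversely, when $\underline{e}(X)$ is non-constant or constant with $e \geq 2$, the $\iota$-twist preserves every parity by \eqref{negate}, so $X$ and $\mathrm{Jac}(X)$ share their type. Assembling these steps yields the claimed equivalence.
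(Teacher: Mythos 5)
Your reduction to the non-hyperelliptic case, the description of $\Theta(\mathrm{Jac}(X)/K)$ as the curve twists together with their $\iota$-translates, and the reformulation that the types differ exactly when $X$ is not mixed but $\mathrm{Jac}(X)$ is mixed all agree with the paper's argument, as does the observation that negation reverses a parity precisely when the relevant $2$-valuation vector is $\{0\}$ or $\{1\}$ (forcing $r$ even). Be aware, however, that the paper proves only the ``only if'' direction and explicitly omits the converse. In the ``only if'' direction you correctly identify, but do not close, the one delicate step: the parity-reversing $\iota$-twist sits a priori over some twist $X'$, so your argument gives $\underline{e}(X'/K)=\{e'\}$ with $e'\le 1$, and passing from this to the base vector $\underline{e}(X/K)$ still needs an argument (the paper runs its contradiction directly on $\underline{e}(X/K)$, in effect taking the witness to be $\mathrm{Jac}(X)_\iota$ itself).

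The genuine gap is in your converse. It rests on the claim that for non-hyperelliptic $X$ with $r$ even and $\underline{e}(X/K)=\{e\}$, $e\le 1$, no curve twist reverses the parity, so that $X$ is not mixed. This claim is false, and the paper's own Proposition \ref{Pexamplemixed} provides a counterexample: the Fermat quartic $x^4+y^4+z^4=0$ over $\FF_{p^2}$ with $p\equiv 3\bmod 4$ is non-hyperelliptic and maximal over $\FF_{p^2}$, so $\underline{e}=\{1\}$ and $r$ is even, yet its order-$2$ twist by $(x,y,z)\mapsto(-x,-y,z)$ has normalized Weil numbers $\{-1,1,1\}$ (up to conjugates), hence parity $-1$, so $X$ is mixed over $\FF_{p^2}$. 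Corollary \ref{Ctwistodd} cannot exclude this: when $\underline{e}=\{e\}$ with $e\le 1$ the period is odd, so $e_M=0$ and the corollary only rules out parity changes coming from odd-order twists, whereas the damaging twist here has order $2$; and the observation that a curve twist cannot realize $\iota$ is beside the point, since Proposition \ref{maxtomin} only forbids a curve twist from exchanging maximal with minimal, not parity $+1$ with parity $-1$. Indeed, this example satisfies all four conditions of the lemma while $X$ and $\mathrm{Jac}(X)$ are both mixed, so the sufficiency direction cannot be established along the lines you propose (and the paper, tellingly, declines to prove it).
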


\begin{proof}
If the types of $X$ and ${\rm Jac}(X)$ over $K$ are not the same, then ${\rm Jac}(X)$ has more twists than $X$, 
so \eqref{algtor} implies that $X$ is not hyperelliptic.
Also, since the extra twist corresponds to $\iota$, then
${\rm Jac}(X)$ is mixed, with ${\rm Jac}(X)$ and ${\rm Jac}(X)_\iota$ having different parities.

Let $\underline{e}=\underline{e}(X/K)$.
If not all $e_i \in \underline{e}$ are the same, 
then not all $e_i \in \underline{e}({\rm Jac}(X)_\iota)$ are the same.
Then both $\mathrm{Jac}(X)$ and ${\rm Jac}(X)_\iota$ would have parity $-1$, a 
contradiction.
Thus $\underline{e}=\{e\}$.

If $e \geq 2$, then $\underline{e}({\rm Jac}(X)_\iota) =\{e\}$ and 
both $\mathrm{Jac}(X)$ and ${\rm Jac}(X)_\iota$ would have parity $1$, a contradiction.
Thus $e \leq 1$ and $r$ must be even by \eqref{no01rodd}.
We omit the converse direction.
\end{proof}

The following results are immediate from Definition \ref{Jactype} and Proposition \ref{maxtomin}.

\begin{lemma}\label{Lbasicdef}
Suppose that $X$ has $K$-period $1$.  Then $X$ is mixed if and only if $X$ is hyperelliptic; $X$ is fully maximal if and only
it is not hyperelliptic and maximal; and $X$ is fully minimal if and only
it is not hyperelliptic and minimal.   
\end{lemma}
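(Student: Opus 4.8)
The plan is to reduce the whole statement to the single equivalence ``$X$ is mixed if and only if $X$ is hyperelliptic'', and to prove that equivalence by isolating the role of $\iota$. First I would record that, by the note following Definition \ref{periodinWeilnums}, $K$-period $1$ forces $q$ to be a square and makes $X$ either maximal (parity $\delta=+1$) or minimal (parity $\delta=-1$) over $K$. The three types of Definition \ref{Jactype} are mutually exclusive and exhaustive, and $X$ itself is the trivial twist in $\Theta(X/K)$, so ``$X$ is not mixed'' means all twists share the common parity $\delta(X)$. Granting the equivalence ``mixed $\iff$ hyperelliptic'', the other two statements then follow formally: a non-mixed (hence non-hyperelliptic) $X$ has all twists of parity $\delta(X)$, which is $+1$ exactly when $X$ is maximal and $-1$ exactly when $X$ is minimal, giving ``fully maximal $\iff$ non-hyperelliptic and maximal'' and ``fully minimal $\iff$ non-hyperelliptic and minimal''.

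For the backward implication of the key equivalence, suppose $X$ is hyperelliptic. Then $\iota\in\mathrm{Aut}_k(X)$ by \eqref{algtor}, so $X_\iota$ is a genuine element of $\Theta(X/K)$, and by the remark following Definition \ref{iota} (an application of Proposition \ref{frobtwist}) it carries the opposite parity: $X_\iota$ is minimal when $X$ is maximal and maximal when $X$ is minimal. Thus $\Theta(X/K)$ contains twists of both parities and $X$ is mixed. For the forward implication, suppose $X$ is mixed and choose a twist $X'\in\Theta(X/K)$ with $\delta(X')$ opposite to $\delta(X)$, realized by $\phi\colon\bar X\xrightarrow{\simeq}\bar{X'}$. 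After upgrading ``opposite parity'' to ``maximal versus minimal'' (the reduction discussed below), I would invoke Proposition \ref{maxtomin}, which forces $g_\phi=\iota$, identifies $X'\simeq X_\iota$ as a quadratic twist, and concludes that $X$ is hyperelliptic.

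I expect the main obstacle to be exactly that reduction, because Proposition \ref{maxtomin} is stated for a maximal curve together with a minimal curve, whereas a priori $X'$ is only of opposite parity and need not itself have $K$-period $1$. To bridge the gap I would write the normalized Weil numbers of $X'$ as $w_i=\lambda_i z_i$ as in \eqref{rela}, where the $\lambda_i$ are the eigenvalues of $g_\phi^{-1}$ coming from $\pi'=\pi_X\circ g_\phi^{-1}$ in Proposition \ref{frobtwist}; since $X$ is maximal one has $z_i=-1$ and hence $w_i=-\lambda_i$. I would then track this through the $2$-valuation vector $\underline e$ of Definition \ref{Des} together with the sign relation \eqref{negate}, the aim being to show that a parity change is forced to occur only when every $\lambda_i=-1$, i.e.\ when $g_\phi$ acts as $-1$ on the whole Tate module and so $g_\phi=\iota$. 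The real content — and the crux on which the ``immediate'' claim genuinely rests — is this assertion that no automorphism other than $\iota$ can change the parity, equivalently that any opposite-parity twist of a period-$1$ curve is itself minimal (resp.\ maximal) rather than an intermediate twist that is neither. Ruling out such intermediate behaviour is the step I would scrutinize most carefully; once it is established, Proposition \ref{maxtomin} applies verbatim and all three equivalences drop out.
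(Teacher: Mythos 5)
You have correctly isolated the two halves of the statement that really are immediate: hyperelliptic plus period $1$ gives the parity-reversing twist $X_\iota$ and hence mixedness, and the ``only if'' halves of the two ``fully'' statements follow formally from that. You have also put your finger on exactly the right spot in the other direction: Proposition \ref{maxtomin} applies only once one knows that an opposite-parity twist of a maximal curve is actually \emph{minimal}, and your proposal leaves that upgrade as a step ``to scrutinize'' rather than proving it. That is a genuine gap, not a formality, and it cannot be closed, because the claim you would need --- that only $\iota$ can change the parity --- is false. If $\tau\in\mathrm{Aut}_K(X)$ is an involution other than $\iota$ whose quotient $Z=X/\tau$ has positive genus, then for $X/K$ maximal one has $\underline{e}(Z/K)=\{1\}$ and $\underline{e}(Z/K,\chi)=\{1\}$, so Lemma \ref{eulergeneral} (or directly Lemma \ref{Levenparitychange}) shows the twist by $\tau$ has normalized Weil numbers consisting of some $-1$'s and some $+1$'s: its $2$-valuation vector is non-constant, so it has parity $-1$ \emph{without} being minimal, and $X$ is mixed without being hyperelliptic. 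The paper itself supplies an instance in the proof of Proposition \ref{Pexamplemixed}: for $p\equiv 3\bmod 4$ the Fermat quartic $x^4+y^4+z^4=0$ is maximal, hence of period $1$, and non-hyperelliptic over $\FF_{p^2}$, yet its twist by $(x,y,z)\mapsto(-x,-y,z)$ (whose quotient has genus $1$) has normalized Weil numbers $\{-1,-1,1,1,1,1\}$ and parity $-1$, so the curve is mixed over $\FF_{p^2}$.

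Consequently the implication ``mixed $\Rightarrow$ hyperelliptic'' (equivalently ``non-hyperelliptic and maximal $\Rightarrow$ fully maximal'') is not reachable by your route --- nor by the paper's one-line appeal to Definition \ref{Jactype} and Proposition \ref{maxtomin}, which makes the same unjustified jump from ``parity $-1$'' to ``minimal''. What survives is precisely what you proved: hyperelliptic $\Rightarrow$ mixed, fully maximal $\Rightarrow$ non-hyperelliptic and maximal, and fully minimal $\Rightarrow$ non-hyperelliptic and minimal. The converses require an extra hypothesis guaranteeing that no twist other than $X_\iota$ changes the parity --- for instance $\vert\mathrm{Aut}_k(X)\vert\le 2$ as in Corollary \ref{typetoei2} --- rather than a cleverer analysis of the $\lambda_i$. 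Your instinct to distrust the word ``immediate'' here was exactly right; the next step should have been to test the needed claim against the non-hyperelliptic mixed examples in Section \ref{S5}.
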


\begin{lemma} \label{Lbasicfact}
If ${\rm Aut}_k(X)$ is trivial, then $X$ is fully maximal over $K$ if and only if it has $K$-parity $1$ and is fully 
minimal if and only it has $K$-parity $-1$.
\end{lemma}

In light of Lemmas \ref{Lbasicdef} and \ref{Lbasicfact}, it is most interesting to study the types of curves which are 
non-hyperelliptic, defined over small fields, or have non-trivial automorphism group.

\subsection{Supersingular non-hyperelliptic curves of mixed type}\mbox{}\\

Despite Proposition \ref{maxtomin}, the results in Sections \ref{sec:low} and \ref{Ssurface}
show that not all hyperelliptic curves are mixed. The next result illustrates that not all mixed curves
are hyperelliptic.

\begin{proposition} \label{Pexamplemixed}
Suppose that $s \equiv 0 \bmod 4$.  Suppose that $p$ is such that $p+1\equiv 0 \bmod s$.
Then the smooth plane curve $X/\FF_p$ of genus $g=(s-1)(s-2)/2$ given by the equation $x^s+y^s+z^s=0$ is supersingular and of mixed type over $\FF_p$.
\end{proposition}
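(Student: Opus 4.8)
The plan is to prove supersingularity and mixedness separately, the latter by exhibiting two $\FF_p$-twists of opposite $\FF_p$-parity. For supersingularity I would use that $s \mid p+1$. Setting $d=(p+1)/s$, the degree-$d$ monomial map $(x:y:z)\mapsto(x^d:y^d:z^d)$ is a surjective morphism over $\FF_p$ from the Hermitian curve $H\colon x^{p+1}+y^{p+1}+z^{p+1}=0$ onto $X$, since $(x^d)^s+(y^d)^s+(z^d)^s = x^{p+1}+y^{p+1}+z^{p+1}$. As $H$ is maximal over $\FF_{p^2}$ and $\Jac(X)$ is an isogeny factor of $\Jac(H)$ over $\FF_{p^2}$, the curve $X$ is maximal over $\FF_{p^2}$; in particular it is supersingular. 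By Lemma \ref{baseAV}(1), the normalized Weil numbers $z_i$ of $X/\FF_p$ then satisfy $z_i^2=-1$, so each $z_i$ is a primitive fourth root of unity. Hence $\underline{e}(X/\FF_p)=\{2\}$, and $X/\FF_p$ has $\FF_p$-period $2$ and $\FF_p$-parity $+1$.

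Next I would construct a twist of $\FF_p$-parity $-1$. Because $s$ is even, the linear map $g\colon(x:y:z)\mapsto(-y:x:z)$ preserves the equation of $X$ and is defined over $\FF_p$; one computes $g^2\colon(x:y:z)\mapsto(x:y:-z)$ and $g^4=\mathrm{id}$, so $g\in{\rm Aut}_{\FF_p}(X)$ has order $4$. Since $c_g=1$, Lemma \ref{LTcG} gives that the associated twist $X_g$ has order $T_g=4$. As $g$ is $\FF_p$-rational it commutes with the relative Frobenius $\pi$, so $\pi$ and $g$ are simultaneously diagonalizable on $H^1$; choosing a joint eigenbasis and writing $\gamma_i$ for the (fourth-root-of-unity) eigenvalue of $g$ on the eigenline carrying $z_i$, Proposition \ref{frobtwist} shows that the normalized Weil numbers of $X_g/\FF_p$ are $w_i=\gamma_i^{-1}z_i$.

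The crux is to show that $g$ has an eigenvalue of order $4$ on $H^1$, and also one of order at most $2$. Here I would use that $X$, a smooth plane curve of degree $s\ge 4$, is non-hyperelliptic, so ${\rm Aut}_k(X)$ acts faithfully on $H^0(X,\Omega^1)$ and, via $\PGL(H^0(X,\Omega^1))$, on the canonical image. The involution $g^2$ is nontrivial and cannot act as $-\mathrm{id}$ (that would make $g^2$ trivial in $\PGL$, forcing $g^2=\mathrm{id}$), so both eigenspaces of $g^2$ on $H^0(X,\Omega^1)$ are nonzero. On the $(-1)$-eigenspace $g$ acts with eigenvalues $\pm i$, and on the $(+1)$-eigenspace with eigenvalues $\pm 1$; by Serre duality and the $g$-stable Hodge filtration on $H^1_{\mathrm{dR}}(X)$, the eigenvalues of $g$ on the $2g$-dimensional $H^1$ are those on $H^0(X,\Omega^1)$ together with their inverses, and these agree with the $\ell$-adic realization since $g$ has finite order. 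Thus some $\gamma_{i_0}$ has order $4$ while some $\gamma_j$ has order at most $2$. Since every $z_i$ has order $4$, the products give $o(w_{i_0})\le 2$ and $o(w_j)=4$, so $\underline{e}(X_g/\FF_p)$ is non-constant and $X_g$ has $\FF_p$-parity $-1$.

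Finally, $X$ has $\FF_p$-parity $+1$ while its twist $X_g$ has $\FF_p$-parity $-1$, so by Definition \ref{Jactype} the curve $X$ is mixed over $\FF_p$. I expect the main obstacle to be the third step: transferring the eigenvalues of $g$ from the Hodge space $H^0(X,\Omega^1)$ to the orders of the factors $\lambda_i=\gamma_i^{-1}$ relating the Weil numbers of $X$ and $X_g$. This requires the simultaneous diagonalizability of $\pi$ and $g$ (from $\FF_p$-rationality of $g$), the agreement of the finite-order eigenvalues of $g$ across de Rham and $\ell$-adic cohomology, and, crucially, that a primitive fourth-root eigenvalue genuinely occurs, which rests on the non-hyperellipticity of $X$ together with $g^2$ being a nontrivial involution distinct from $-\mathrm{id}$. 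The remaining bookkeeping, namely the order of the twist and the $r$ odd constraint \eqref{no01rodd}, is routine given Corollary \ref{Ctwistodd}.
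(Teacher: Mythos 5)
Your proof is correct, and while its first half (maximality of $X$ over $\FF_{p^2}$ via the Hermitian cover, hence supersingularity and $\underline{e}(X/\FF_p)=\{2\}$, parity $+1$) coincides with the paper's, the second half takes a genuinely different route. The paper twists by $h\colon(x,y,z)\mapsto(\lambda_1 y,x,z)$ with $o(\lambda_1)=s/2$; for $s>4$ this $h$ is defined over $\FF_{p^2}$ but not over $\FF_p$, so the paper must pass to $\FF_{p^2}$, form the Frobenius-composed automorphism $h\,\prescript{Fr}{}{h}\colon(x,y,z)\mapsto(\lambda_1x,\lambda_1^{-1}y,z)$, and read off its eigenvalues on the explicit monomial basis of $H^1(X,\OO)$. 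You instead twist by the $\FF_p$-rational order-$4$ automorphism $g\colon(x:y:z)\mapsto(-y:x:z)$, which lets you stay over $\FF_p$ and replaces the monomial computation by the soft observation that $g^2$, being a nontrivial involution of a canonically embedded (non-hyperelliptic) curve, cannot act as a scalar on $H^0(X,\Omega^1)$, so $g$ has eigenvalues both of order $4$ and of order at most $2$; since every $z_i$ equals $\pm i$, the products $w_i=\gamma_i^{-1}z_i$ then give a non-constant $\underline{e}(X_g/\FF_p)$ and parity $-1$. Each approach buys something: yours is more uniform (the twisting automorphism is $\FF_p$-rational, no Frobenius-conjugacy bookkeeping over $\FF_{p^2}$ is needed, and in fact you only use that $s$ is even, so your argument also covers $s\equiv 2\bmod 4$, where the paper's final parity count via the order of $-\lambda_1$ would degenerate); the paper's computation identifies the twisted Weil numbers explicitly, which is reusable information. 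The one step you should tighten is the transfer of the eigenvalues of $g$ from $H^0(X,\Omega^1)$ in characteristic $p$ to $T_\ell(\Jac(X))\otimes\QQ_\ell$: this is valid because $o(g)=4$ is prime to $p$ (note $p\equiv-1\bmod s$ forces $p$ odd), most cleanly by lifting to characteristic $0$ as the paper does parenthetically, or by Brauer character theory; Serre duality and the Hodge filtration alone do not give the comparison with the $\ell$-adic realization. The remaining ingredients --- simultaneous diagonalizability of $\pi$ and $g$ (since $\pi$ commutes with every $\FF_p$-rational endomorphism), the formula $w_i=\gamma_i^{-1}z_i$ from Proposition \ref{frobtwist}, and the fact that a non-constant $2$-valuation vector forces parity $-1$ --- are all sound.
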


\begin{proof}
The curve $X/\FF_p$ is a smooth plane curve, of genus $g=(s-1)(s-2)/2$ by the Plucker formula.

The Hermitian curve $\tilde{X}: x_1^{p+1} + y_1^{p+1} + z_1^{p+1}=0$ is maximal over $\FF_{p^2}$. 
Let $\epsilon =(p+1)/s$.
%KEEP $p + 1 \equiv 0 \bmod s$, 
There is a cover $\psi: \tilde{X} \to X$ given by $(x_1,y_1,z_1) \mapsto (x_1^\epsilon, y_1^\epsilon, z_1^\epsilon)$.
The cover is Galois, since there exists $\lambda \in \FF_{p^2}^*$ with multiplicative order $\epsilon$.
So $X$ is a quotient of $\tilde{X}$ by a group of automorphisms defined over $\FF_{p^2}$.
By a result attributed to Serre, see \cite[Theorem 10.2]{Kbook},
$X$ is also maximal over $\FF_{p^2}$ and thus has $\FF_{p}$-parity $1$.
In particular, $X$ is supersingular.

Let $\lambda_1 \in \FF_{p^2}^*$ be an element of multiplicative order $s_1=s/2$.  
Consider the automorphism $h \in {\rm Aut}_{\FF_{p^2}}(X)$ given by 
$h(x,y,z) = (\lambda_1 y, x, z)$.
Then 
\begin{eqnarray*}
h\prescript{{Fr_{\FF_p}}}{}{h} (x,y,z) & = & h (Fr_{\FF_p} ( h (x^{1/p}, y^{1/p}, z^{1/p}))) \\
& = & h (Fr_{\FF_p} (\lambda_1 y^{1/p}, x^{1/p}, z^{1/p})) = h (\lambda_1^p y, x, z)\\
& = & (\lambda_1 x, \lambda_1^p y, z) = (\lambda_1 x, \lambda_1^{-1} y, z),
\end{eqnarray*}
where the last equality uses that $p \equiv -1 \bmod s$.
In particular, $h\prescript{{Fr_{\FF_p}}}{}{h}$ has order $s_1$.

Consider the action of $h\prescript{{Fr_{\FF_p}}}{}{h}$ on ${\rm Jac}(X)/\FF_{p^2}$.
The next claim is that the eigenvalues for this action include both $1$ and a root of unity of order $s_1$.
To see this, it suffices to prove the same claim for the action on $H^1(X, {\mathcal O})$ 
(after lifting to characteristic $0$, 
using that ${\rm Jac}(X) \simeq H^0(X, \Omega^1)^*/H_1(X, \ZZ)$ and invoking Serre duality).
Now $H^1(X, {\mathcal O})$ has a basis given by the monomials 
$x^{-k_1}y^{-k_2}z^{-k_3}$ where $k_1,k_2,k_3 \in {\mathbb N}$
and $k_1+k_2+k_3 = r$.
Then $h\prescript{{Fr_{\FF_p}}}{}{h}$ acts via multiplication by 
$\lambda_1^{-k_1+k_2}$ on $x^{-k_1}y^{-k_2}z^{-k_3}$.
The claim follows by taking $(k_1,k_2)=(1,1)$ and $(k_1,k_2)=(1,2)$. 

The normalized Weil numbers of $X/\FF_{p^2}$ are all $-1$ and so $\underline{e}(X/\FF_{p^2}) = \{1\}$.
Let $X'$ be the twist of $X/\FF_{p}$ corresponding to $h$.
Then $X'/\FF_{p^2}$ is the twist of $X/\FF_{p^2}$ by $h\prescript{{Fr_{\FF_p}}}{}{h}$.
Its set of normalized Weil numbers contains $-1$ and $- \lambda_1$.
By hypothesis, $s_1$ is even.  So $- \lambda_1$ has odd order if $s_1  \equiv 2 \bmod 4$ and 
$- \lambda_1$ has order $s_1$ if $s_1 \equiv 0 \bmod 4$.
Thus $\underline{e}(X'/\FF_{p^2})$ contains the values $1$ and $0$ if $s_1 \equiv 2 \bmod 4$ and the values 
$1$ and ${\rm ord}_2(s_1) \geq 2$ if $s_1 \equiv 0 \bmod 4$.
In either case, $\underline{e}(X'/\FF_{p^2}) \not = \{e\}$ and $\underline{e}(X'/\FF_{p}) \not = \{e\}$ for any $e$.
Hence, $X'$ has $\FF_{p}$-parity $-1$.
Thus $X$ is mixed over $\FF_p$.
\end{proof}

\begin{example} \label{Efermatquartic}
For $p \equiv 3 \bmod 4$, the Fermat curve $X/\FF_p: x^4+y^4+z^4 = 0$ is a non-hyperelliptic supersingular curve of genus $3$ which is mixed over $\FF_p$.
\end{example}

\begin{remark}
Let $p$ be odd.
Let $E/\FF_p$ be a supersingular elliptic curve with $\NWNs(E/\FF_p)=\{i, -i\}$.
In \cite[Theorem 1]{ibu3} (resp.\ \cite[Proposition 15]{HLP}),
the authors construct a smooth plane quartic $X/\FF_p$ such that 
${\rm Jac}(X) \simeq_{\FF_{p^2}} E^3$ 
(resp.\ ${\rm Jac}(X) \sim_{\FF_{p^2}} E^3$).
In particular, $X$ is maximal over $\FF_{p^2}$.
The polarization on ${\rm Jac}(X)$ induces a non-product polarization on $E^3$.
To determine the type of $X$, it is necessary to determine which automorphisms of $E^3$ are compatible with this polarization and the field of definition of these automorphisms. 
\end{remark}

\subsection{Parity-changing twists of curves} \label{SsmallautX} \mbox{}\\

Let $X/K$ be a supersingular curve of genus $g$ and let $G={\rm Aut}_k(X)$.
The \NWN determine the $K$-parity of $X$.
To determine the type over $K$, it is necessary to know whether $X$ has a parity-changing $K$-twist.

By Corollary \ref{Ctwistodd}, the $2$-divisibility of the order $T$ of a twist gives information about whether it 
can change the $K$-parity of $X$.
However, this is not easy to control because the values of $T$ 
depend on the $K$-Frobenius conjugacy classes of $G$ and on the fields of definition of the automorphisms $g \in G$. 

This section contains results that simplify the question of whether $X$ has a parity-changing twist. 
This material is used in Section \ref{SVR}.
Given $g \in G$, recall from Proposition \ref{bij1} 
that $\phi_g: X \times_K k \to X' \times_K k$ is a geometric isomorphism such that $\xi_\phi(Fr_K)=g$.

\begin{lemma} \label{Loddnotparity}
If $h \in G$ has odd order and is defined over $K$, then $\phi_h$ is not a parity-changing twist. 
\end{lemma}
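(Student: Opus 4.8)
The plan is to show that if $h \in G$ has odd order and is defined over $K$, then the twist $X'$ corresponding to $h$ has the same $K$-parity as $X$, so that $\phi_h$ cannot change parity. The key reduction is that since $h$ is defined over $K$, we have $c_h = 1$ in the notation of Lemma \ref{LTcG}, so the order $T$ of the twist equals $T_h = |h|$, which is odd. This immediately puts us in the situation of Corollary \ref{Ctwistodd}(3): if $X'/K$ and $X/K$ had different $K$-parities, then $T$ would be even. Since $T = |h|$ is odd, the parities must agree.

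Concretely, first I would invoke Lemma \ref{LTcG} (or Remark \ref{Rcompose}): because $h$ is defined over $K$, the twisted conjugates $\prescript{Fr_K^j}{}{h}$ all equal $h$, so $\xi_h(Fr_K^T) = h^T$ and hence $T_h$, the order of the twist, is exactly the order $|h|$ of $h$, which is odd. Then the $2$-valuation ${\rm ord}_2(T) = 0$. Applying Corollary \ref{Ctwistodd}(3) with $A = {\rm Jac}(X)$ and $A' = {\rm Jac}(X')$, the conclusion that different parities force $T$ even gives the contrapositive: an odd-order twist preserves the $K$-parity.

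There are no serious obstacles here; the lemma is essentially a clean corollary of the machinery already built up. The only point requiring mild care is confirming that the order of the twist $X'/K$ coincides with the order of the twist ${\rm Jac}(X')/K$ — but this follows from the fact established in Section \ref{S5} (via \eqref{algtor}) that $X$ and ${\rm Jac}(X)$ have the same $2$-valuation vector and that the parity is governed entirely by the normalized Weil numbers, which agree for $X$ and ${\rm Jac}(X)$. Alternatively, one can argue directly at the level of $\NWNs$: writing $w_i = \lambda_i z_i$ as in \eqref{rela}, the $\lambda_i$ are $T$-th roots of unity with $T$ odd, so each ${\rm ord}_2(o(\lambda_i)) = 0$, whence $\underline{e}(X'/K) = \underline{e}(X/K)$ by Lemma \ref{Ltwistsimple} (with $\epsilon = 0 < {\rm min}\{e_i\}$ unless some $e_i = 0$, in which case the parity is already $-1$ and negation by $\lambda_i$ of odd order cannot change it). Either route yields that $X$ and $X'$ share the same $K$-parity, completing the proof.
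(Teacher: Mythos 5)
Your proof is correct and follows the same route as the paper, whose entire proof reads ``This is immediate from Lemmas \ref{LTcG} and \ref{Ctwistodd}'': you use Lemma \ref{LTcG} to see that $c_h=1$ and the order of the twist is $T_h=\vert h\vert$, which is odd, and then apply Corollary \ref{Ctwistodd}(3). Your additional remarks (the curve versus Jacobian point, and the alternative argument via \eqref{rela} and Lemma \ref{Ltwistsimple}) only supply details the paper leaves implicit.
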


\begin{proof}
This is immediate from Lemmas \ref{LTcG} and \ref{Ctwistodd}. \end{proof}

Suppose that $\tau \in {\rm Aut}_k(X)$ has order $2$. 
Assume that $\tau$ is defined over $K$; this is true, for example, if $\tau=\iota$ or 
if ${\rm Aut}_k(X)$ has a unique element of order $2$.
Let $Z=X/\tau$ be the quotient of $X$ by $\tau$, which is also defined over $K$. 
Thus, $X \to Z$ is a geometric $\ZZ/2\ZZ$-Galois cover. 
Let $\chi$ be the nontrivial character of $\ZZ/2\ZZ$; it satisfies $\chi(P)=1$ if $P \in Z$ is split in $X$ and $\chi(P)=-1$ if $P$ is inert in $X$. Consider the Artin $L$-series
\begin{equation}\label{L2euler}
L(Z/K,T,\chi) = \prod_{P \in Z} (1 - \chi(P) \vert P\vert ^{-s})^{-1}.
\end{equation}

\begin{lemma} \label{eulergeneral}
Suppose that $\tau \in {\rm Aut}_K(X)$ has order $2$. 
\begin{enumerate}
\item There is a factorization $L(X/K, T)=L(Z/K, T) L(Z/K, T, \chi)$ in $\ZZ[T]$.
\item The coefficient $\rho_1$ of $T$ in $L(Z/K, T, \chi)$ equals $S_1-I_1$, 
where $I_1$ (resp.\ $S_1$) is the number of $K$-points of $Z$ that are 
inert (resp.\ split) in $X$. 
\item $\tau$ negates the roots of $L(Z/K, T, \chi)$ and fixes the roots of $L(Z/K, T)$.
\end{enumerate}
\end{lemma}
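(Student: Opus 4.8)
The plan is to prove the three parts of Lemma~\ref{eulergeneral} by combining the classical factorization of the $L$-polynomial under a $\ZZ/2\ZZ$-cover with the action of the involution $\tau$ on Weil numbers established in Proposition~\ref{maxtomin} and the point-counting formula of Corollary~\ref{pointscurve}. Since $\tau \in {\rm Aut}_K(X)$ is defined over $K$, the quotient map $\psi \colon X \to Z = X/\tau$ is a $\ZZ/2\ZZ$-Galois cover defined over $K$, and all the relevant zeta and $L$-functions are honest objects over $K$.

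For part (1), I would first invoke the standard factorization of zeta functions in a Galois cover: for the abelian cover $\psi$ with Galois group $\ZZ/2\ZZ$, the zeta function $Z(X/K,T)$ factors as a product of Artin $L$-series over the characters of $\ZZ/2\ZZ$, namely $Z(X/K,T) = Z(Z/K,T)\,L(Z/K,T,\chi)$, where the trivial character contributes $Z(Z/K,T)$ and the nontrivial character $\chi$ contributes the $L$-series from \eqref{L2euler}. Using Theorem~\ref{zetacurve} to write each zeta function as $L$-polynomial over $(1-T)(1-qT)$ and cancelling the common denominator, I obtain $L(X/K,T) = L(Z/K,T)\,L(Z/K,T,\chi)$; integrality of the $\chi$-factor follows because it is the quotient of two polynomials in $\ZZ[T]$ and is itself the characteristic polynomial of Frobenius on the $\chi$-isotypic part of $H^1$, hence lies in $\ZZ[T]$.

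For part (2), I would take the logarithmic derivative of \eqref{L2euler}, or equivalently expand $L(Z/K,T,\chi) = \exp\left(\sum_{m \geq 1} c_m \frac{T^m}{m}\right)$ where $c_m = \sum_{P} \chi(\mathrm{Frob}_P^m)\deg(P)$ summed appropriately over closed points; extracting the linear coefficient $\rho_1$ gives $\rho_1 = c_1 = \sum_{P \in Z(K)} \chi(P)$. Separating the $K$-points of $Z$ according to whether they split ($\chi(P)=+1$) or are inert ($\chi(P)=-1$) in $X$ yields $\rho_1 = S_1 - I_1$. (Ramified points contribute $\chi(P)=0$ and drop out, which is consistent with the convention in \eqref{L2euler}.)

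For part (3), I would use that $\tau$ acts on $H^1$ preserving the eigenspace decomposition into $\chi$-invariants and $\chi$-anti-invariants: the roots of $L(Z/K,T)$ correspond to the $+1$-eigenspace of $\tau$ (pulled back from $Z$) and are fixed, while the roots of $L(Z/K,T,\chi)$ correspond to the $-1$-eigenspace. The point is that twisting $X$ by $\tau$ replaces the Frobenius eigenvalue $\alpha$ by $\alpha \cdot (\pm 1)$ according to the $\tau$-eigenvalue, exactly as in Proposition~\ref{frobtwist} and Proposition~\ref{maxtomin}, so the roots coming from the $\chi$-part are negated and those from the quotient $Z$ are unchanged. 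The main obstacle I anticipate is justifying cleanly that the roots of the two factors correspond precisely to the $\pm 1$-eigenspaces of $\tau$ on $H^1$ and that the twist multiplies them by the corresponding sign; I would handle this by lifting to characteristic zero and diagonalizing $\tau$ on $H^1(X,\OO)$ (as done in the proof of Proposition~\ref{Pexamplemixed}), since $\tau$ has order $2$ and commutes with Frobenius as an element of ${\rm Aut}_K(X)$, so the two operators are simultaneously block-diagonalizable and the sign bookkeeping becomes transparent.
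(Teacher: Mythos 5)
Your proposal is correct and follows essentially the same route as the paper: part (1) is the standard abelian-cover factorization (the paper cites Rosen), part (2) is the same extraction of the linear coefficient from the Euler product \eqref{L2euler}, and part (3) rests on Proposition \ref{frobtwist} applied to the $\pm 1$-eigenspace decomposition under $\tau$. The only cosmetic difference is that for part (3) the paper works with the isogeny $\mathrm{Jac}(X) \sim_K \mathrm{Jac}(Z) \oplus V$ (so the sign bookkeeping happens on abelian varieties rather than on $H^1(X,\mathcal{O})$, whose $\tau$-eigenspaces only see half of the \'etale $H^1$), which sidesteps the lifting argument you anticipate as the main obstacle.
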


\begin{proof}
\begin{enumerate}
\item This result follows from \cite[Chapter 9, page 130]{rosen}. 
\item Recall that $\zeta(X/K, T)= \prod_{Q \in X} (1 - \vert Q\vert ^{-s})^{-1}$, where $T=q^{-s}$. Similarly, $\zeta(Z/K, T)= \prod_{P \in Z} (1 - \vert P\vert ^{-s})^{-1}$.
Write
\begin{equation}\label{isrproduct}
\zeta(X/K, T)= \prod (1-\vert P_i\vert ^{-2s})^{-1} \prod(1-\vert P_{sp}\vert ^{-s})^{-2} \prod(1-\vert P_r\vert ^{-s})^{-1},
\end{equation}
where $P_i$, $P_{sp}$, $P_r$ range over points of $Z$ that are inert, split, and ramified in $X$, respectively.
Note that $(1- \vert P\vert ^{-2s})=(1-\vert P\vert ^{-s})(1+\vert P\vert ^{-s})$.
The result follows by comparing \eqref{L2euler} and \eqref{isrproduct} and computing the coefficients of $T$.
\item Since $Z = X/\tau$, the involution $\tau$ acts trivially on $Z$ and thus 
fixes the roots of $L(Z/K, T)$.
There is an isogeny $\mathrm{Jac}(X) \sim_K \mathrm{Jac}(Z) \oplus V$ 
where $V/K$ is the nontrivial eigenspace for $\tau$.
Then $L(Z/K,T,\chi)=L(V/K,T)$.
By Proposition \ref{frobtwist}, $\tau$ acts as -1 on the roots of $L(V/K, T)$ by the definition of $V$.
\end{enumerate}
\end{proof}

Suppose that $\tau \in {\rm Aut}_K(X)$ has order $2$.
Write $\underline{e}=\underline{e}(Z/K) \cup \underline{e}(Z/K, \chi)$ where 
$\underline{e}(Z/K, \chi)$ denotes the multiset of $2$-valuations of the 
normalized roots of $L(Z/K,T,\chi)$.
If $\tau$ is the hyperelliptic involution, then $\underline{e}(Z/K)$ is empty and 
$\underline{e}= \underline{e}(Z/K, \chi)$.

\begin{lemma} \label{Levenparitychange}
If $\tau \in {\rm Aut}_K(X)$ has order $2$, then $\phi_\tau$ is a parity-changing twist if and only if 
$r$ is even and either $\underline{e}(Z/K)=\{1\}$ and $\underline{e}(Z/K, \chi)=\{e\}$ with $e \leq 1$,
or $\underline{e}(Z/K)=\{0\}$ and $\underline{e}(Z/K, \chi)=\emptyset$.
\end{lemma}

\begin{proof}
By Lemma \ref{eulergeneral}, $\tau$ negates the roots of $L(Z/K, T, \chi)$ and fixes the roots of $L(Z/K, T)$.
This changes the parity only under the given conditions. \end{proof}

Information about parity-changing twists can be determined from $\underline{e}$ in certain cases 
when $\ZZ/2\ZZ \times \ZZ/2\ZZ  \subset {\rm Aut}_k(X)$ 
using the next remark.  Section \ref{SVRfinal} uses this material.

\begin{remark} \label{R22twist}
Suppose that ${\rm Aut}_k(X)$ contains a subgroup $S \simeq \ZZ/2\ZZ \times \ZZ/2\ZZ$.  
Write $S=\{{\rm id}, \tau_1, \tau_2, \tau_3\}$.
Suppose that $S$ is stabilized by $K$-Frobenius conjugation, in which case
the number $\gamma$ of nontrivial involutions in $S$ defined over $K$ is either $3$, $0$, or $1$.

\begin{enumerate}
\item
When $\gamma=3$ and $X/S$ has genus $0$, let $A_i={\rm Jac}(X/\tau_i)$.
Then $\mathrm{Jac}(X) \sim_K A_1 \oplus A_2 \oplus A_3$ by \cite[Theorem B]{kanirosen}.  
Each $\tau_i$ acts by negating $\NWNs(A_i/K)$ for exactly two values of $i$. 
Write $\underline{e}_i = \underline{e}(A_i)$ and $\underline{e}(X) = \bigcup_{i=1}^{3} \underline{e}_i$. 
The twist for $\tau_i \in S$ changes the parity if and only if $\underline{e}(X) = \{1\}$ or  
(after rearranging), $\underline{e}_1 = \{1\}$, $\underline{e}_2=\{0\}$, and 
$\underline{e}_3= \{0\}$ or $\emptyset$. 

\item
When $\gamma = 0$, 
$K$-Frobenius conjugation acts via a $3$-cycle on $S-\{{\rm id}\}$, so the twist for each $\tau_i$ has order $3$.  
By Corollary \ref{Ctwistodd}, these do not change the parity.  
%KEEP $\tau_i \prescript{{Fr_K}}{}{\tau_i} \prescript{{Fr^2_K}}{}{\tau_i} = {\rm id}$

\item
When $\gamma=1$, 
suppose $\tau=\tau_1$ is defined over $K$ while $\mu=\tau_2$ and $\mu \tau =\tau_3$ are not.
Let $Z = X/\tau$. 
%By Lemma \ref{eulergeneral}(3), the twist for $\tau$ negates the roots of $L(Z/K, T, \chi)$ and fixes the roots of $L(Z/K, T)$.
Note that $\prescript{{Fr_K}}{}{\mu}=\mu \tau$ and $\mu \prescript{{Fr_K}}{}{\mu} = \tau$. 
Using Lemma \ref{LTcG}, the twist for $\mu$ has $c=2$ and $\vert G\vert =2$.
Moreover, the twist by $\mu$ over $K$ corresponds to the twist $X_\tau$ by $\tau$ over $K_2$, 
so it negates the roots of $L(Z/K_2, T, \chi)$ and fixes the roots of $L(Z/K_2, T)$ by Lemma \ref{eulergeneral}(3).
To find the action of $\mu$ on $\underline{e}(X/K)$, it is necessary to take the square roots of the 
$\NWNs(X_\tau/K_2)$.
If $e_i \leq 1$ for any $i$, this leads to some ambiguity in $\underline{e}(X_\mu/K)$,
which can be partially resolved by the following observation.

\begin{equation} \label{Eambiguity}
{\rm {\bf Claim:} \ When\ } \gamma=1, {\rm \ the \ coefficient \ }\rho_1 {\rm \ of \ }
T {\rm \ in \ }L(Z/K, T, \chi) {\rm \ equals \ }0.
\end{equation}

\begin{proof}
By Lemma \ref{eulergeneral}(2), it suffices to prove $S_1=I_1$.
If $p$ is odd, $X \to Z$ has an equation of the form $y^2=F$.
Given a $K$-point $P$ of $Z$, it suffices to show $P$ is split in $X$ if and only if $\mu(P)$ is inert in $X$.
The point $P$ splits in $X$ if and only if $F(P)$ is a square in $K^*$.
Since $\mu$ and $\tau$ commute, $\mu$ acts on both $X$ and $Z$. 
By assumption, 
the action of $\mu$ on the equation $y^2=F$ is defined over $K_2$ but not over $K$.
%this action is defined over $K_2$ but not over $K$. 
%The same holds for the action of $\mu$ on the covering equation $y^2=F$. 
The $K$-action of $\mu$ thus yields a quadratic twist of $y^2=F$.  So $\mu(y)=w y$ 
for some $w \in K_2^* \setminus K^*$ such that $z=w^2$ is in $K^*$, and $F(\mu(P)) = z y$.
Thus, $F(P)$ is a square in $K^*$ if and only if $F(\mu(P))$ is not.

The proof for $p=2$ is the same, after replacing $y^2$ by $y^2-y$, $\mu(y)=wy$ by $\mu(y)=y+w$ for some $w \in K_2 \setminus K$
such that $z=w^2-w$ is in $K$, and $F(\mu(P)) = zF(P)$ by $F(\mu(P)) = F(P) + z$. \end{proof}
\end{enumerate}
\end{remark}

%%%%%%%%%%%%%%%%%%%%%%%%%%%%%%%%%%%%%%%%%%

\section{Analysis in low dimension: elliptic curves}\label{sec:low}

Let $K = \FF_{q}$ with $q=p^r$ and let $k = \overline{\FF}_p$.
If $E/\FF_q$ is an elliptic curve, then
$L(E/\FF_{q},T) = 1 - \beta T + qT^2$ for some $\beta \in \ZZ$.
Moreover, $E$ is supersingular if and only if $p \mid \beta$.
By Honda-Tate theory (cf.\ \cite{tate71}, \cite{honda}, \cite{tate}), $\beta$ determines 
the $\FF_{q}$-isogeny class of $E$.  

\begin{lemma}\label{g=1weilnumbers}
Let $q=p^r$. 
The following table lists each $\beta \in \ZZ$ which occurs for 
a supersingular elliptic curve $E/\FF_q$, 
together with the normalized Weil numbers $z$ and $\bar z$, the $2$-adic valuation $e={\rm ord}_2(o(z))$, 
the period, and the parity. We use the convention that $\zeta_n = e^{2\pi i/n}$.

\begin{longtable}{| l | p{4cm} | r | l | l | l | l | }	\caption{Supersingular elliptic curves.}\label{tab:g=1}\\   
 \hline
  Case  $n_E$ & Conditions on $r$ and $p$ & $\beta$ & $\NWNs(E/\FF_q)$ & ${\rm ord}_2(o(z) )$ & Period & Parity \\ \hline
$W1 \pm$ & $r$ even & $\pm 2\sqrt{q}$ & $(\pm 1, \pm 1)$ & 0 & 1 & $\mp 1$ \\
$W2 \pm$ & $r$ even, $p \not\equiv 1 \pmod 3$ & $\pm \sqrt{q}$ & $(\mp \zeta_3, \mp \overline{\zeta}_3)$ & 1 & 3 & $\pm 1$ \\
$W3$ & $r$ even, $p \not \equiv 1 \pmod 4$ or $r$ odd & 0 & $(i,-i)$ & 2 & 2 & 1 \\
$W4a$ & $r$ odd, $p = 2$ & $ \pm \sqrt{2q}$ & $(\pm \zeta_8, \pm \overline{\zeta}_8)$ & 3 & 4 & 1 \\
$W4b$ & $r$ odd, $p = 3$ & $\pm \sqrt{3q}$ & $(\pm \zeta_{12}, \pm \overline{\zeta}_{12})$ & 2 & 6 & 1 \\
	\hline
	\end{longtable}
\end{lemma}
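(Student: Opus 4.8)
The plan is to reduce the entire table to the single real quantity $s:=z+\bar z=\beta/\sqrt q$, using that supersingularity forces $z$ to be a root of unity. First I would note that $\NWNs(E/\FF_q)=\{z,\bar z\}$ with $z\bar z=1$ and $z+\bar z=\beta/\sqrt q$, so that $z=e^{i\theta}$ with $2\cos\theta=\beta/\sqrt q$. By Theorem~\ref{propssAV}(4), $E$ is supersingular precisely when $z$ is a root of unity; then $z$ and $\bar z=z^{-1}$ are algebraic integers, hence so is $s^2=(z+\bar z)^2$. Since also $s^2=\beta^2/q\in\QQ$ and a rational algebraic integer is a rational integer, while $0\le s^2=(2\cos\theta)^2\le4$, I get $s^2\in\{0,1,2,3,4\}$. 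This finiteness statement is the crux of the argument.

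Writing $c:=s^2=\beta^2/q$, the identity $(z+\bar z)^2=2+2\cos(2\theta)=c$ determines $2\theta$, hence $\theta$, hence $z$, up to the sign of $s=\beta/\sqrt q$: the values $c=0,1,2,3,4$ force $o(z)$ to be $4$, $6$ or $3$, $8$, $12$, and $1$ or $2$, respectively, which is exactly the $\NWNs$ column, with the residual sign fixed by $\mathrm{sign}(\beta)$. I would then ask for which $q=p^r$ each candidate $\beta=\pm\sqrt{cq}$ is an integer. Since $\beta^2=c\,p^r$ must be a perfect square, $c=1$ and $c=4$ force $r$ even, $c=2$ forces $p=2$ with $r$ odd, $c=3$ forces $p=3$ with $r$ odd, and $c=0$ imposes no constraint; in each nonzero case the resulting $\beta$ is divisible by $p$, as supersingularity requires. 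This already yields all the conditions on $r$ and the restrictions $p\in\{2,3\}$ in cases $W4a$ and $W4b$.

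The hard part will be the residual congruence conditions $p\not\equiv1\pmod3$ in case $W2$ and $p\not\equiv1\pmod4$ in case $W3$ with $r$ even, which are effectivity statements invisible to the integrality argument above. Here I would invoke Honda--Tate theory \cite{tate71,honda,tate} together with the classification of supersingular elliptic curves \cite{schoof}: the Weil number $\pi=\sqrt q\,z$ is always effective, but it is realized by a one-dimensional abelian variety exactly when $p$ does not split in the CM field $\QQ(z)$, which is $\QQ(\zeta_3)$ for $c=1$ and $\QQ(i)$ for $c=0$ with $r$ even. When $p$ splits there are two places over $p$, each contributing local invariant $\tfrac12$ to $\End(A)\otimes\QQ$; that algebra then has index $2$, and the Honda--Tate dimension formula returns a supersingular abelian surface, not an elliptic curve. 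When $p$ is inert or ramified the single place above $p$ contributes an integral invariant, the algebra is the field $\QQ(\pi)$, and the dimension is $1$. As $p$ splits in $\QQ(\zeta_3)$ iff $p\equiv1\pmod3$ and in $\QQ(i)$ iff $p\equiv1\pmod4$, this produces precisely the stated exclusions, and effectivity gives existence whenever the conditions hold.

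Finally I would read the last three columns off $z$. In each case $o(z)$ is immediate, giving $e=\mathrm{ord}_2(o(z))$; the period is the least $m$ with $rm$ even and $z^m\in\{\pm1\}$ uniformly, located by finding the least power carrying $z$ to $\pm1$ (for example $z=\zeta_8$ has $z^4=-1$, giving period $4$ and parity $+1$ in $W4a$); and the parity is $+1$ or $-1$ according to whether that power is $-1$ or $+1$, per Definition~\ref{periodinWeilnums}. The relation \eqref{negate} then explains the $\pm$-rows: since $(-z)^m=\pm z^m$, the period is unchanged under negation, while in $W1$ and $W2$, where $e\in\{0,1\}$, negation swaps $e=0$ and $e=1$ and so flips the parity, whereas in $W4a$ and $W4b$, where $e\ge2$, it preserves $e$ and the parity stays $+1$; in $W3$ the set $\{z,\bar z\}=\{i,-i\}$ is itself negation-stable. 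These verifications are routine, so the substance lies in the finiteness argument of the first paragraph and the Honda--Tate input of the third.
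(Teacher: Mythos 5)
Your proposal is correct, but it takes a more self-contained route than the paper, whose proof is essentially a one-line citation: the admissible values of $\beta$ together with the conditions on $r$ and $p$ are imported wholesale from Waterhouse's classification \cite[Theorem 4.1]{water}, and the remaining columns (normalized Weil numbers, $e$, period, parity) are then exactly the short computation you carry out in your last paragraph. What you do differently is reprove the relevant part of Waterhouse's theorem: the finiteness step ($\beta^2/q\in\{0,1,2,3,4\}$, since $z+\bar z$ is a real algebraic integer of absolute value at most $2$ whose square is rational, using Theorem \ref{propssAV}(4)); the integrality step (forcing $r$ even when $c\in\{1,4\}$ and $p\in\{2,3\}$ with $r$ odd when $c\in\{2,3\}$); and the Honda--Tate step, which correctly explains the exclusions $p\equiv 1\bmod 3$ for $\beta=\pm\sqrt q$ and $p\equiv 1\bmod 4$ for $\beta=0$ with $r$ even, because when $p$ splits in $\QQ(\pi)$ the two places above $p$ each carry invariant $\tfrac12$ and the dimension formula returns a simple surface rather than an elliptic curve. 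All three steps are sound; to be fully complete you should also record that in the remaining cases ($c\in\{2,3,4\}$, and $c=0$ with $r$ odd) the field $\QQ(\pi)$ is either $\QQ$ or an imaginary quadratic field in which $p$ ramifies, so the single place above $p$ yields dimension $1$ and existence follows from effectivity --- you gesture at this but do not spell it out. The trade-off is length versus transparency: the paper's citation is shorter, while your argument makes visible exactly where the congruence conditions come from and is what one would write to verify the cited statement from first principles.
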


\begin{proof}
This is a short calculation based on the values of $\beta$ in \cite[Theorem 4.1] {water}.
\end{proof}

The number of supersingular $j$-invariants is $\lfloor \frac{p}{12}\rfloor + \epsilon$ (with $\epsilon = 0,1,1,2$ if $p \equiv 1,5,7,11 \bmod 12$) \cite[Theorem V.4.1(c)]{aec}.

\begin{remark} \label{Ranalytic}
Let $N(\beta)$ denote the number of $\FF_{q}$-isomorphism classes of elliptic curves 
in the $\FF_{q}$-isogeny class determined by $\beta$.
The values of $N(\beta)$ are found in \cite[Theorem 4.6]{schoof}; they depend only on $p$, not $q$, 
and $N(-\beta) = N(\beta)$.
Using this and Table \ref{tab:g=1}, one can determine the probability that a given supersingular elliptic curve
$E/\FF_{p^r}$ has $\FF_{p^r}$-parity $1$.
If $r$ is odd, then the $\FF_{p^r}$-parity is always $1$.
If $r$ is even, then $N(0)=1- \binom{-4}{p}$ is
the difference between the number of isomorphism classes of $E/\FF_{p^r}$
with $\FF_{p^r}$-parity $1$ and $-1$.
\end{remark}

Each supersingular $j$-invariant is in $\FF_{p^2}$. 
If $E/\overline{\FF}_p$ is a supersingular elliptic curve, then $E$ descends to $\FF_p$ or $\FF_{p^2}$;
it descends to $\FF_p$ if and only if the $j$-invariant of $E$ is in $\FF_p$.  
The next result shows that in neither case is $E$ fully minimal.

\begin{theorem} \label{Tconcg1}
Let $E/\overline{\FF}_p$ be a supersingular elliptic curve.
If the $j$-invariant of $E$ is in $\FF_p$,
then $E$ is fully maximal over $\FF_p$; 
if not, then $E$ is mixed over $\FF_{p^2}$.
\end{theorem}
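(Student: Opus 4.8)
The plan is to split into the two cases indicated by the statement and, in each, exploit the classification of supersingular elliptic curves in Table~\ref{tab:g=1} together with the structure of $\mathrm{Aut}_k(E)$. Throughout, I would use that $\vert \mathrm{Aut}_k(E)\vert = 2$ unless $E$ has extra automorphisms (which happens only for $j=0$ or $j=1728$ in small characteristic), so that Corollary~\ref{typetoei2} translates the type question into a question about the $2$-valuation vector $\underline{e}(E/\FF_{p^\bullet})$, which for an elliptic curve is a single value $e = \mathrm{ord}_2(o(z))$ read off from the last-but-one column of the table.

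\emph{Case $j(E)\in\FF_p$.} Here $E$ descends to $\FF_p$, so I work over $K=\FF_p$ with $r=1$ odd. By Lemma~\ref{g=1weilnumbers}, the only cases with $r$ odd are $W3$ (with $e=2$), $W4a$ (with $e=3$), and $W4b$ (with $e=2$); in every case $\underline{e}=\{e\}$ with $e\ge 2$. If $\vert\mathrm{Aut}_k(E)\vert=2$, Corollary~\ref{typetoei2}(1) immediately gives that $E$ is fully maximal over $\FF_p$. For the finitely many curves with larger automorphism group (the $j=0,1728$ curves in characteristic $2,3$), I would check directly that every $\FF_p$-twist still has parity $+1$: the relevant automorphisms have order $3,4,6$, and twisting by them multiplies the normalized Weil numbers by a root of unity of odd order or by $\pm i$, which by \eqref{negate} cannot drop $e$ below $2$, hence cannot produce parity $-1$. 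This last verification is where I would be most careful, using Lemma~\ref{Ltwistsimple} to control $\underline{e}$ under twisting.

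\emph{Case $j(E)\notin\FF_p$.} Here $E$ descends only to $\FF_{p^2}$, so I work over $K=\FF_{p^2}$ with $r=2$ even, and I must exhibit two twists of opposite parity. Since $r$ is even, the relevant table rows are $W1\pm$ ($e=0$), $W2\pm$ ($e=1$), and $W3$ ($e=2$). When $\vert\mathrm{Aut}_k(E)\vert=2$, the mixed conclusion follows from Corollary~\ref{typetoei2}(3) precisely when $\underline{e}=\{e\}$ with $e\in\{0,1\}$; the quadratic twist $E_\iota$ then has the opposite parity by the $\iota$-argument of Definition~\ref{iota} and Proposition~\ref{maxtomin}. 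So the main point is to rule out the case $\underline{e}=\{2\}$ (row $W3$) for curves with $j\notin\FF_p$: I would argue that $W3$ over $\FF_{p^2}$ forces $E$ to have CM by $i$ and hence $j=1728\in\FF_p$, contradicting the hypothesis, so only rows $W1\pm$ and $W2\pm$ survive and $e\in\{0,1\}$ always holds.

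The main obstacle I anticipate is bookkeeping around curves with extra automorphisms, where $\vert\mathrm{Aut}_k(E)\vert>2$ and Corollary~\ref{typetoei2} does not apply directly; there one has more than one nontrivial twist and must confirm that none of them (in the first case) changes the parity away from $+1$, and (in the second case) that at least one genuinely realizes the opposite parity. I expect this is handled cleanly by combining Proposition~\ref{bij1} (to enumerate twists via $\FF_p$-Frobenius conjugacy classes of $\mathrm{Aut}_k(E)$) with \eqref{negate} and Corollary~\ref{Ctwistodd} (odd-order twists preserve parity), leaving only the order-$2$ and order-$4$ twists to inspect explicitly against the table.
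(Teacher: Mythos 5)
Your overall architecture is the paper's: reduce to the Waterhouse table (Lemma \ref{g=1weilnumbers}), invoke Corollary \ref{typetoei2} when $\mathrm{Aut}_k(E)\simeq \ZZ/2\ZZ$, rule out row $W3$ over $\FF_{p^2}$ when $j\notin\FF_p$ (the paper asserts exactly what you prove via CM, namely that a curve in that isogeny class has $j$-invariant in $\FF_p$), and handle the $j=0,1728$ curves by enumerating twists. The one step that fails as written is your treatment of the extra-automorphism curves over $\FF_p$: you claim that twisting by the order-$4$ automorphism ``multiplies the normalized Weil numbers by $\pm i$, which by \eqref{negate} cannot drop $e$ below $2$.'' That is false: if $z=i$ and $\lambda=i$, then $\lambda z=-1$ has $\mathrm{ord}_2(o(-1))=1$, so such a twist would have $\underline{e}=\{1\}$ and the fully maximal conclusion would be lost; moreover \eqref{negate} concerns negation only and says nothing about multiplication by $i$. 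What actually saves the argument --- and what the paper's Lemma \ref{LEextraaut} computes --- is that over $\FF_p$ (with $p\equiv 3\bmod 4$) the order-$4$ automorphism $g$ is not defined over $\FF_p$ and its $\FF_p$-Frobenius twist is $g^{-1}$, so by Lemma \ref{LTcG} its Frobenius conjugacy class yields only a \emph{quadratic} twist; hence $\lambda\in\{\pm 1\}$ and Lemma \ref{Ltwistsimple} applies with $\mathrm{ord}_2(T)=1<2$. If the twist order really were $4$, Lemma \ref{Ltwistsimple} would not apply and the parity could change (this is precisely why the same curve is \emph{mixed} over $\FF_{p^2}$, where $g$ is defined over the ground field).

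There is also a shortcut you missed that makes the whole first case automorphism-free: every $\FF_p$-twist of $E$ is itself a supersingular elliptic curve over $\FF_p$ with $r=1$ odd, so by Table \ref{tab:g=1} (equivalently, by \eqref{no01rodd}) its $2$-valuation is $\geq 2$ and its $\FF_p$-parity is $+1$; hence $E$ is fully maximal over $\FF_p$ with no case analysis at all. This is exactly how the paper's Lemma \ref{Lg1p23} dispatches $p=2$, and it works verbatim for all $p$. I recommend replacing the ``multiplication by $\pm i$'' step with either this observation or the explicit Frobenius-conjugacy computation. Your second case is fine, and the extra-automorphism worry you raise there is vacuous: $j\notin\FF_p$ forces $j\neq 0,1728$, hence $\mathrm{Aut}_k(E)\simeq\ZZ/2\ZZ$ for $p\geq 5$, while for $p\leq 3$ the unique supersingular $j$-invariant already lies in $\FF_p$.
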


\begin{proof} 
If $p=2$, the result is proven in Lemma \ref{Lg1p23} (below).
If $p \geq 3$ and ${\rm Aut}_k(E) \not \simeq \ZZ/2\ZZ$, the result is proven in Lemma \ref{LEextraaut} (below).  
This completes the proof for $p = 3$, since there is only one
isomorphism class of supersingular elliptic curves over $\overline{\FF}_3$.

Finally, suppose that $p \geq 5$ and ${\rm Aut}_k(E) \simeq \ZZ/2\ZZ$, so that $E_{\iota}$ is the only twist of $E$.
If $E$ is defined over $\FF_p$, then $E$ and $E_{\iota}$ are both in case W3 of Table \ref{tab:g=1}, thus $E$ is fully maximal over $\FF_p$.
If $E$ is instead defined over $\FF_{p^2}$, then $E$ and $E_{\iota}$ are either in cases W1$\pm$ or 
in cases W2$\pm$ of Table \ref{tab:g=1};
note that $E$ cannot be in case W3 because of the condition ${\rm Aut}_k(E) \simeq \ZZ/2\ZZ$
(and in that case $E$ has $j$-invariant in $\FF_p$).
Thus $E$ is mixed over $\FF_{p^2}$.
\end{proof}

\begin{lemma} \label{Lg1p23}
If $p=2$, the unique supersingular elliptic curve $E/\overline{\FF}_2$ is fully maximal over~$\FF_2$.
\end{lemma}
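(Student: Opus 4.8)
The plan is to classify the twists of the unique supersingular elliptic curve $E/\overline{\FF}_2$ by hand and check that every twist defined over $\FF_2$ has $\FF_2$-parity $1$. The starting point is that $E$ has $j$-invariant $0$ (the only supersingular $j$-invariant in characteristic $2$), so $E$ descends to $\FF_2$, and its geometric automorphism group is large: $|{\rm Aut}_k(E)| = 24$ since ${\rm Aut}_k(E) \simeq \mathrm{SL}_2(\FF_3)$. By Proposition \ref{bij1}, the twists of $E/\FF_2$ correspond bijectively to the $\FF_2$-Frobenius conjugacy classes of this group. First I would exhibit a concrete Weierstrass model, for instance $E: y^2 + y = x^3$, compute its $L$-polynomial to locate it in Table \ref{tab:g=1}, and determine $\NWNs(E/\FF_2)$; since $r=1$ is odd and $p=2$, this is case $W4a$, giving $\NWNs(E/\FF_2) = (\zeta_8, \bar\zeta_8)$, parity $+1$, with ${\rm ord}_2(o(z)) = 3$.

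Next I would enumerate the $\FF_2$-Frobenius conjugacy classes of ${\rm Aut}_k(E)$ and, for each class representative $g$, use Lemma \ref{LTcG} and Remark \ref{Rcompose} to compute the order $T$ of the corresponding twist, together with its normalized Weil numbers via Proposition \ref{frobtwist}. The key arithmetic fact to exploit is \eqref{no01rodd}: since $r=1$ is odd, the $2$-valuation vector $\underline{e}$ of any twist cannot be $\{0\}$ or $\{1\}$, so parity $-1$ would force $\underline{e} = \{0\}$, which is impossible over $\FF_2$. In other words, for $g=1$ and $r$ odd, \emph{every} supersingular twist automatically has parity $+1$, so once I know all twists of $E/\FF_2$ are supersingular elliptic curves over $\FF_2$ (which they are, being geometrically isomorphic to $E$), the fully maximal conclusion is forced. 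This is really the crux: the odd degree of the base field rules out parity $-1$ entirely.

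The cleanest route is therefore to observe that over $\FF_2$ the base field has odd degree $r=1$, and every $\FF_2$-twist $E'$ of $E$ is again a supersingular elliptic curve over $\FF_2$. By Table \ref{tab:g=1}, the only cases with $r$ odd are $W3$ (parity $1$) and $W4a$ (parity $1$); case $W4b$ requires $p=3$, and cases $W1\pm$, $W2\pm$ require $r$ even. Hence every $\FF_2$-twist of $E$ has $\FF_2$-parity $+1$, so $E$ is fully maximal over $\FF_2$ by Definition \ref{type}.

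I expect the main obstacle to be purely bookkeeping rather than conceptual: one must confirm that the enumeration via Table \ref{tab:g=1} genuinely covers all $\FF_2$-twists, i.e.\ that no twist leaves the supersingular elliptic-curve world (guaranteed since twists are geometric isomorphisms preserving $j=0$ and supersingularity) and that the odd-$r$ rows of the table are exhaustive. The elegant point worth emphasizing is that the argument needs no case-by-case traversal of the $24$-element automorphism group at all: the parity of a supersingular elliptic curve over a field of odd degree is determined entirely by the table, and all such entries have parity $+1$. This same observation is the natural $g=1$ shadow of the general statement, implicit in Remark \ref{Ranalytic}, that the $\FF_{p^r}$-parity is always $1$ when $r$ is odd.
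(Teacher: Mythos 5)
Your final argument is essentially the paper's own proof: every $\FF_2$-twist of $E$ is again a supersingular elliptic curve over $\FF_2$, and since $r=1$ is odd, every applicable row of Table \ref{tab:g=1} has parity $+1$, so no traversal of the $24$-element automorphism group is needed. One small slip worth correcting: the model $y^2+y=x^3$ has $\vert E(\FF_2)\vert = 3$, hence $\beta=0$ and lies in case W3 with $\NWNs(E/\FF_2)=(i,-i)$, not in case W4a; this does not affect the conclusion, which uses only that all odd-$r$ rows of the table have parity $+1$.
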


\begin{proof}
The uniqueness fact can be found in \cite[Appendix A, Proposition 1.1]{aec}.
So $E$ is isomorphic over $k$ to the elliptic curve $E/\FF_2$ with affine equation $y^2=x^3-x$ with $j$-invariant $0$.
Then $\vert E(\FF_2)\vert =p+1$, so $\beta=0$ (case W3 of Table \ref{tab:g=1}).
The $\FF_{2}$-twists are also defined over $\FF_{2}$, thus are in case W3, W4a or W4b of Table \ref{tab:g=1}, 
which each have $\FF_{2}$-parity $+1$.
\end{proof}

\begin{lemma} \label{LEextraaut}
Let $p \geq 3$.  If ${\rm Aut}_k(E) \not \simeq \ZZ/2\ZZ$, 
then $E$ is fully maximal over $\FF_p$.
\end{lemma}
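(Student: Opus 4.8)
The plan is to avoid enumerating the twists individually and instead exploit the fact that \emph{every} supersingular elliptic curve over $\FF_p$ has $\FF_p$-parity $+1$. First I would use the hypothesis: for $p \geq 3$, an elliptic curve with ${\rm Aut}_k(E) \not\simeq \ZZ/2\ZZ$ has $j$-invariant $0$ or $1728$ (and $j = 0$ when $p = 3$, where these coincide). Since $0, 1728 \in \FF_p$, the curve $E$ admits a model over $\FF_p$, so it is legitimate to speak of its $\FF_p$-twists. By Definition \ref{twist}, each $E' \in \Theta(E/\FF_p)$ is by construction a curve \emph{defined over} $\FF_p$ that is geometrically isomorphic to $E$; because supersingularity is a geometric condition, every such $E'$ is again a supersingular elliptic curve over $\FF_p$.

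The core of the argument is then a direct reading of Table \ref{tab:g=1}. Over $\FF_p$ we have $r = 1$, which is odd, and $p$ is odd; inspecting Lemma \ref{g=1weilnumbers}, the only cases compatible with $r$ odd and $p \neq 2$ are W3 and (when $p = 3$) W4b. Both of these have ${\rm ord}_2(o(z)) \geq 2$ and $\FF_p$-parity $+1$. Equivalently, the Hasse bound $|\beta| \leq 2\sqrt{p}$ together with supersingularity ($p \mid \beta$) forces $\beta = 0$ for $p \geq 5$ and $\beta \in \{0, \pm 3\}$ for $p = 3$, and each resulting normalized Weil number has parity $+1$. Applying this to each twist $E'$ shows that $\delta(E') = 1$ for all $E' \in \Theta(E/\FF_p)$.

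Finally, since all $\FF_p$-twists have parity $+1$, the definition of fully maximal (Definition \ref{type}) yields that $E$ is fully maximal over $\FF_p$, as claimed; this also supplies the $p = 3$ case of Theorem \ref{Tconcg1}, there being a unique geometric isomorphism class. The step I expect to require the most care is the reduction in the first paragraph, namely pinning down that the extra-automorphism hypothesis forces $j \in \FF_p$, in particular handling the characteristic $3$ automorphism group where $j = 0 = 1728$ and the group has order larger than $6$. The main conceptual pitfall to avoid is over-engineering the proof by classifying the twists and their orders via Proposition \ref{bij1} and Corollary \ref{Ctwistodd}; no such enumeration is needed once one observes that a supersingular elliptic curve over $\FF_p$ can only lie in the parity-$+1$ rows of Table \ref{tab:g=1}.
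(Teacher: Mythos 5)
Your proof is correct, but it takes a genuinely different and more economical route than the paper's. The paper identifies the two relevant curves explicitly ($y^2=x^3-x$ and $y^2=x^3+1$), notes that both lie in case W3 with $\underline{e}=\{2\}$, then computes the $\FF_p$-Frobenius conjugacy classes of ${\rm Aut}_k(E)$ case by case (including the order-$12$ group at $p=3$), determines the order of each resulting twist via Lemma \ref{LTcG}, and invokes Lemma \ref{Ltwistsimple} to conclude that every twist still has $\underline{e}=\{2\}$ and hence parity $+1$. You bypass that entire enumeration by observing that every $E'\in\Theta(E/\FF_p)$ is itself a supersingular elliptic curve over the prime field, and that for $r=1$ odd the only available rows of Table \ref{tab:g=1} are W3 and (for $p=3$) W4b, both of parity $+1$; equivalently, $p\mid\beta$ together with $\vert\beta\vert\leq 2\sqrt{p}$ leaves no parity-$(-1)$ option. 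This is exactly the mechanism the paper itself uses for the ${\rm Aut}_k(E)\simeq\ZZ/2\ZZ$ case in the proof of Theorem \ref{Tconcg1}, and it is recorded in Remark \ref{Ranalytic} that the $\FF_{p^r}$-parity is always $+1$ when $r$ is odd; so your argument in fact proves the stronger statement that \emph{any} supersingular elliptic curve over $\FF_p$ is fully maximal over $\FF_p$, the extra-automorphism hypothesis serving only to guarantee $j\in\FF_p$ so that $E$ descends (and the type is independent of the chosen $\FF_p$-model, since all models share the same twist set). What the paper's longer computation buys is explicit data --- the number of $\FF_p$-twists and their normalized Weil numbers, in the style used throughout Sections \ref{sec:low}--\ref{Ssurface}; what your argument buys is brevity, uniform treatment of $p=3$ versus $p>3$, and independence from the twist classification of Proposition \ref{bij1}.
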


\begin{proof}
If ${\rm Aut}_k(E) \not \simeq \ZZ/2\ZZ$, then 
$E$ is isomorphic over $k$ to either:
\begin{enumerate}
\item $y^2=x^3-x$
($j$-invariant $1728$), which is supersingular if and only if $p \equiv 3 \bmod 4$; or
\item $y^2=x^3+1$, 
($j$-invariant $0$), which is supersingular if and only if $p \equiv 2 \bmod 3$. 
\end{enumerate}
In both cases, $\{z, \bar{z}\}=\{i, -i\}$ (case W3 of Table \ref{tab:g=1})
with $\underline{e}(E/\FF_p)=\{2\}$ and the curve is defined over $\FF_p$, so we consider its type over $\FF_p$.

For case (1), 
let $g \in \mathrm{Aut}_k(E)$ be the order $4$ automorphism
defined by $g(x,y)=(-x, iy)$.
\begin{itemize}
\item[(a)] If $p>3$, then $\mathrm{Aut}_k(E) \simeq \langle g \rangle$.
Then $E/\FF_p$ has only one nontrivial twist because the $\FF_p$-Frobenius conjugacy classes in $\mathrm{Aut}_k(E)$ are $\{{\rm id}, \iota\}$ and $\{g, g^3\}$.  By Lemma \ref{LTcG}, 
the latter of these yields a quadratic twist since $c=2$ and $G=g \prescript{{Fr}}{}{g}={\rm id}$.
By Lemma \ref{Ltwistsimple}, the twist has $\underline{e}=\{2\}$ as well.

\item[(b)] If $p=3$, then $\vert \mathrm{Aut}_k(E) \vert = 12$ \cite[Appendix A, Proposition 1.2]{aec}. Then $\mathrm{Aut}_k(A) = \langle g, \sigma \rangle$ where $\sigma(x,y)=(x+1, y)$.
The $\FF_{p}$-Frobenius conjugacy classes are $\{ \mathrm{id}, \iota \}$, $\{ \sigma^2, \sigma\iota \}$, $\{ \sigma, \sigma^2\iota, \}$, and $\{ g, g^3, \sigma g, \sigma g^3, \sigma^2 g, \sigma^2 g^3 \}$.
The first (resp.\ last) of these yield a trivial (resp.\ quadratic) twist as in (a).  Since $\sigma$ and $\sigma^2$ have order $3$ and are defined over $\FF_p$, 
these yield twists of order $3$ by Lemma \ref{LTcG} 
with $\underline{e}=\{2\}$ by Lemma \ref{Ltwistsimple}.

% KEEP
%Let $\alpha \in \FF_{3^3}$ be an element such that $\alpha^3 - \alpha =1$.
%The other two Frobenius conjugacy classes correspond to the twists
%$E'': y^2 +1 = x^3-x$ and $E''': y^2+2 = x^3 -x$, 
%where the isomorphisms can be chosen to be
%$\phi'':E \to E''$ taking $(x,y) \mapsto (x - \alpha, y)$
%and $\phi''':E \to E'''$ taking $(x,y) \mapsto (x + \alpha, y)$.
%These twists have order $3$ since they are nontrivial and $\phi''$ and $\phi'''$ are defined over $\FF_{3^3}$.
\end{itemize}

For case (2), $\mathrm{Aut}_k(E) = \langle h \rangle$ where  
$h$ has order $6$ and is
defined by $h(x,y)=( \zeta_3 x, -y)$.  The two $\FF_{p}$-Frobenius conjugacy classes are $\{\mathrm{id}, h^2, h^4\}$ and $\{h, h^3, h^5\}$.
Since $h^3 = \iota$, the latter of these yields a quadratic twist.
By Lemma \ref{Ltwistsimple}, the twist has $\underline{e}=\{2\}$ as well.

Thus, in both case (1) and case (2), $E$ is fully maximal over $\FF_p$.
\end{proof}

\section{Analysis in low dimension: abelian surfaces} \label{Ssurface}

\subsection{Parity table for simple abelian surfaces}\mbox{}\\

Let $q=p^r$ and $k = \overline{\FF}_p$. 
Suppose that $A/\FF_q$ is an abelian surface, which is not necessarily principally polarized. 
The $\FF_{q}$-isogeny class of $A$ is determined by (the conjugacy class of) its Weil numbers or, 
equivalently, by the coefficients $(a_1,a_2)$ of
\[P(A/\FF_{q},T) = T^4 + a_1T^3 + a_2T^2 + qa_1T + q^2 \in \ZZ[T].\]

%KEEP Suppose that $A/\FF_q$ is simple. 
%Then either $P(A/\FF_{q},T)$ is irreducible over $\QQ$, or $P(A/\FF_q, T) = h_{A}(T)^2$ for some irreducible $h_{A}(T) = T^2 - \beta T + q \in \ZZ[T]$. In the latter case, by \cite[Corollary 2.8]{maisnart}, $r$ must be even and either $\beta = \pm \sqrt{q}$ (for $p \equiv 1 \bmod 3$) or $\beta = 0$ (for $p \equiv 1 \bmod 4$).

The next result builds on \cite{maisnart}.
Let $L$ be the minimal field extension of $\FF_q$ over which $A$ is not simple. 
Then $A \sim_L E \oplus E$, where $E/L$ is a supersingular elliptic curve. 

\begin{proposition}\label{g=2weilnumbers}
The following table classifies all $(a_1,a_2)$ which occur 
as the coefficients of $P(A/\FF_q, T)$ for a simple supersingular abelian surface $A/\FF_q$, 
together with the data:
\begin{itemize}
\item $t_0={\rm deg}(L/\FF_q)$;
\item $W$, labeling $E/L$ as in the first column of Table \ref{tab:g=1};
\item $z/L$, one of the normalized Weil numbers $(z,\overline{z},z,\overline{z})$ of $A/L$ (again $\zeta_n = e^{2\pi i/n}$); 
\item $\NWNs(A/\FF_q)$, the \NWN of $A/\FF_{q}$;
\item $\mu$ and $\delta$, the period and parity respectively of $A/\FF_q$.
\end{itemize}
\begin{center}
\begin{table}
\caption{Simple supersingular abelian surfaces.}
\label{tab:g=2}
    \begin{tabular}{| l | l | p{5.2cm} | l | l | l | l | l | r |}    \hline
  & $(a_1,a_2)$ & Conditions on $r$ and $p$ & $t_0$ & $W$ & $z/L$ & $\NWNs(A/\FF_{q})$ & $\mu$ & $\delta$ \\ \hline
1a & $(0,0)$ & $r$ odd, $p \equiv 3 \bmod 4$ or $r$ even, $p \not \equiv 1 \bmod 4$ & $2$ & $3$ & $i$ & $(\zeta_8,\zeta^7_8, \zeta^3_8,\zeta^5_8)$ & $4$ & $1$ \\
1b & $(0,0)$ & $r$ odd, $p \equiv 1 \bmod 4$ or $r$ even, $p \equiv 5 \bmod 8$ & $4$ & $1$ & $-1$ & $(\zeta_8,\zeta^7_8, \zeta^3_8,\zeta^5_8)$ & $4$ & $1$ \\
2a & $(0,q)$ & $r$ odd, $p \not\equiv 1 \bmod 3$ & $2$ & $2$ & $\zeta_3$ & $(\zeta_6,\zeta_6^5, \zeta_6^2, \zeta_6^4)$ & $6$ & $-1$ \\
2b & $(0,q)$ & $r$ odd, $p \equiv 1 \bmod 3$ & $6$ & $1$ & $-1$ & $(\zeta_{12}, \zeta_{12}^{11}, \zeta_{12}^5, \zeta_{12}^7)$ & $6$ & $1$ \\
3a & $(0,-q)$ & $r$ odd and $p \equiv 2 \bmod 3$ or $r$ even and $p \not\equiv 1 \bmod 3$ & $2$ & $2$ & $-\zeta_3$ & $(\zeta_{12}, \zeta_{12}^{11}, \zeta_{12}^5, \zeta_{12}^7)$ & $6$ & $1$ \\
3b & $(0,-q)$ & $r$ odd and $p \equiv 1 \bmod 3$ or $r$ even and $p \equiv 4,7,10 \bmod{12}$ & $3$ & $3$ & $i$ & $(\zeta_{12}, \zeta_{12}^{11}, \zeta_{12}^5, \zeta_{12}^7)$ & $6$ & $1$ \\
4a & $(\sqrt{q},q)$ & $r$ even and $p \not\equiv 1 \bmod 5$ & $5$ & $1$ & $1$ & $(\zeta_5,\zeta_5^4,\zeta_5^2,\zeta_5^3)$ & $5$ & $-1$ \\
4b & $(-\sqrt{q},q)$ & $r$ even and $p \not\equiv 1 \bmod 5$ & $5$ & $1$ & $-1$ & $(\zeta_{10},\zeta_{10}^9,\zeta_{10}^3,\zeta_{10}^7)$ & $5$ & $1$ \\
5a & $(\sqrt{5q},3q)$ & $r$ odd and $p = 5$ & $10$ & $1$ & $1$ & $(\zeta_{10}^3,\zeta_{10}^7,\zeta_5^2,\zeta_5^3)$ & $10$ & $-1$ \\
5b & $(-\sqrt{5q},3q)$ & $r$ odd and $p = 5$ & $10$ & $1$ & $1$ & $(\zeta_{10},\zeta_{10}^9,\zeta_5,\zeta_5^4)$ & $10$ & $-1$ \\
6a & $(\sqrt{2q},q)$ & $r$ odd and $p = 2$ & $4$ & $2$ & $-\zeta_3$ & $(\zeta_{24}^{13},\zeta_{24}^{11},\zeta_{24}^{19},\zeta_{24}^5)$ & $12$ & $1$ \\
6b & $(-\sqrt{2q},q)$ & $r$ odd and $p = 2$ & $4$ & $2$ & $-\zeta_3$ & $(\zeta_{24},\zeta_{24}^{23},\zeta_{24}^{7},\zeta_{24}^{17})$ & $12$ & $1$ \\
7a & $(0,-2q)$ & $r$ odd & $2$ & $1$ & $1$ & $(1,1,-1-1)$ & $2$ & $-1$ \\
7b & $(0,2q)$ & $r$ even and $p \equiv 1 \bmod 4$ & $2$ & $2$ & $-1$ & $(i,-i,i,-i)$ & $2$ & $1$ \\
8a & $(2\sqrt{q},3q)$ & $r$ even and $p \equiv 1 \bmod 3$ & $3$ & $1$ & $1$ & $(\zeta_3,\zeta_3^2,\zeta_3,\zeta_3^2)$ & $3$ & $-1$ \\
8b & $(-2\sqrt{q},3q)$ & $r$ even and $p \equiv 1 \bmod 3$ & $3$ & $1$ & $-1$ & $(\zeta_6,\zeta_6^5, \zeta_6, \zeta_6^5)$ & $3$ & $1$ \\
	\hline
	\end{tabular}
	\end{table}
\end{center}

\end{proposition}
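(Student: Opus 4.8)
The plan is to treat the pair $(a_1,a_2)$ together with the conditions on $r$ and $p$ as the primary data, supplied by Honda--Tate theory (\cite{tate71}, \cite{honda}, \cite{tate}) and \cite{maisnart}, and to derive the remaining columns $t_0$, $W$, $z/L$, $\NWNs(A/\FF_q)$, $\mu$ and $\delta$ from them. First I would use Honda--Tate to record the shape of $P=P(A/\FF_q,T)$ for a \emph{simple} $A$: one has $P=m_\pi(T)^e$ for the minimal polynomial $m_\pi$ of the Frobenius $\pi$, with $e\,[\QQ(\pi):\QQ]=2g=4$, so either $P$ is irreducible of degree $4$ with $\QQ(\pi)$ a quartic CM field, or $P=m_\pi(T)^2$ with $\QQ(\pi)$ imaginary quadratic (the source of the repeated normalized Weil numbers in rows 7a--8b). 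Supersingularity (Theorem~\ref{propssAV}) forces $z:=\pi/\sqrt q$ to be a root of unity; combined with $[\QQ(\pi):\QQ]\le 4$ this restricts $o(z)$ to an explicit finite set. For $r$ even, $\sqrt q\in\QQ$ gives $z\in\QQ(\pi)$ and hence $\phi(o(z))\le 4$; an order with $\phi(o(z))>4$, namely $o(z)=24$, can occur only when $r$ is odd, since then $\sqrt q\notin\QQ$ and $z$ need not lie in $\QQ(\pi)$. This bounds the number of rows.

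Given an isogeny class, the normalized data is a direct computation. Normalizing $P(T)=T^4+a_1T^3+a_2T^2+qa_1T+q^2$ by $\sqrt q$ yields a real reciprocal polynomial whose roots $z_i$ lie on the unit circle in conjugate pairs, so it factors as $(T^2-s_1T+1)(T^2-s_2T+1)$ with $s_j=z_j+\bar z_j=2\cos\theta_j$; matching coefficients gives $-(s_1+s_2)=a_1/\sqrt q$ and $2+s_1 s_2=a_2/q$, which recovers the $z_i$ and hence $\NWNs(A/\FF_q)$. The period $\mu$ and parity $\delta$ then follow at once from Definition~\ref{periodinWeilnums}: $\mu$ is the least $m$ with $q^m$ a square and all $z_i^m$ equal to a common value in $\{1,-1\}$, and $\delta$ is fixed by which of the two values occurs. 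I would simultaneously use $P\in\ZZ[T]$ to see which parities of $r$ and residues of $p$ are forced---for instance, when $r$ is odd one has $\sqrt q\notin\QQ$, constraining $a_1$---thereby reproducing column~$3$.

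The genuinely new content is $L$, its degree $t_0$, and the elliptic curve $E/L$. Since $A$ is geometrically isogenous to $E^2$ (Theorem~\ref{propssAV}(2)), I would compute $t_0$ as the least $m$ for which $A\times_K K_m$ becomes non-simple. By Lemma~\ref{baseAV}(1) the normalized Weil numbers of $A/K_m$ are $\{z_i^m\}$, and $A/K_m\sim E^2$ precisely when these collapse to a single conjugate pair $\{z_E,\bar z_E\}$, each of multiplicity two, \emph{and} a supersingular elliptic curve with those normalized Weil numbers exists over $K_m$. The existence in the second clause is exactly the content of Table~\ref{tab:g=1} (Lemma~\ref{g=1weilnumbers}), so walking up the tower $m=2,3,\dots$ and consulting that table determines $t_0$, the label $W$, and $z/L=z_E$ in one stroke. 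This also explains why a single $(a_1,a_2)$ can split into two rows: for $(0,0)$ with $\NWNs$ the primitive eighth roots of unity (rows 1a, 1b), over $K_2$ one has $z_i^2\in\{i,-i\}$, which defines a W3 curve when $p\not\equiv1\bmod4$ (giving $t_0=2$, row 1a); but when $p\equiv1\bmod4$ no W3 curve exists over the even-degree field $K_2$, so $A$ remains simple until $K_4$, where $z_i^4=-1$ gives a W1 curve ($t_0=4$, row 1b).

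The main obstacle is the arithmetic bookkeeping that decides, for each Weil number, whether it belongs to a simple surface or to an elliptic curve, and over which minimal extension the elliptic factor first appears; both reduce to the same Honda--Tate input, namely the splitting of $p$ in $\QQ(\pi)$ (equivalently the local invariant of $\End^0(A)$ at $p$), which is precisely what the residue conditions modulo $3,4,5,8,12$ in Table~\ref{tab:g=1} record. Tracking every congruence and every parity of $r$ consistently across all rows---in particular the exceptional small-characteristic families $p=2$ (rows 6a, 6b) and $p=5$ (rows 5a, 5b), where $\sqrt{2q}$ and $\sqrt{5q}$ become integral and enlarge $t_0$---and confirming that the list is \emph{complete}, with no further simple supersingular isogeny class omitted, is where the real effort lies. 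The remaining verifications are the routine normalization of Weil numbers and the mechanical application of Definition~\ref{periodinWeilnums}.
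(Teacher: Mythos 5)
Your proposal is correct, and it takes a somewhat different route from the paper. The paper's proof is almost entirely a citation: the list of pairs $(a_1,a_2)$, the congruence conditions on $p$ and $r$, \emph{and} the value of $t_0$ are all taken from Table 1 of \cite{maisnart}, and $W$ is obtained by applying \cite[Lemma 2.13, Theorem 2.9]{maisnart} to compute $P(A/L,T)$; only $\NWNs(A/\FF_q)$ (by solving $P=0$ directly), the period, and the parity are computed in-house, the latter two via Table \ref{tab:g=1} exactly as you do. You instead keep \cite{maisnart} only for the master list and its congruence conditions, and re-derive $t_0$, $W$, and $z/L$ from the Honda--Tate criterion that $A\times_K K_m$ is non-simple precisely when the conjugacy class of $\pi^m$ is that of a supersingular elliptic curve over $K_m$, checked against Waterhouse's list in Lemma \ref{g=1weilnumbers}. (Your criterion is sound: since $A/\FF_q$ is simple, $P(A/K_m,T)$ is a power of the minimal polynomial of $\pi^m$, so $A/K_m$ is isogenous to a power of the Honda--Tate simple object attached to $\pi^m$, and non-simplicity is equivalent to that object having dimension one.) What your route buys is a conceptual explanation of why a single $(a_1,a_2)$ splits into rows with different $t_0$ (your 1a/1b analysis is exactly right) and an independent consistency check on the cited table --- genuinely valuable here, since the paper's footnote records that the value of $t_0$ in Case 5 is in fact wrong in \cite{maisnart}. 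What it does not buy is the completeness of the list or the finer congruences (e.g.\ $p\equiv 5\bmod 8$ in 1b, $p\equiv 4,7,10\bmod{12}$ in 3b), which encode when the Weil number yields a \emph{simple surface} over $\FF_q$ rather than merely when the elliptic quotient exists over $L$; you correctly identify this as the hard part, but, like the paper, you ultimately outsource it to the Honda--Tate local-invariant computation in \cite{maisnart} rather than carrying it out.
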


\begin{proof} 
The list of $(a_1,a_2)$, conditions on $r$ and $p$, and $t_0$ are found in \cite[Table 1, page 325]{maisnart}.\footnote{We would like to thank a referee for pointing out that the value of $t_0$ in Case $5$ is incorrect in \cite{maisnart}.}
Applying \cite[Lemma 2.13, Theorem 2.9]{maisnart}, we
compute the coefficients of $P(A/L, T)$ where $L = \FF_{q^{t_0}}$
and determine $W$.
Then the values of $z/L$, the period, and the parity can be found using Table \ref{tab:g=1}.
The period is the product of $t_0$ and the period of $E$ over $\FF_{q}$
and the parities of $A$ and $E$ are the same.
To determine $\NWNs(A/\FF_{q})$, 
we solve $P(A/\FF_{q},T) = 0$ directly.
\end{proof}

We now give a full classification of the types of supersingular simple principally polarized abelian surfaces
with $\mathrm{Aut}_k(A) \simeq \ZZ/2\ZZ$, using Proposition \ref{g=2weilnumbers}.

\begin{proposition}\label{Pmain7}
 Let $A$ be a supersingular simple principally polarized abelian surface defined over $K = \FF_q$.
Assume that $\mathrm{Aut}_k(A) \simeq \ZZ/2\ZZ$.  In Proposition \ref{g=2weilnumbers}:
\begin{enumerate}
\item if $r$ is odd, then $A/K$ is not mixed;
cases $(1)$, $(2b)$, $(3a)$, $(6)$ are fully maximal and cases $(2a)$, $(5)$, $(7a)$ are fully minimal.
\item if $r$ is even, then $A/K$ is not fully minimal; 
cases $(1)$, $(3a)$, and $(7b)$ are fully maximal and cases $(4)$ and $(8)$ are mixed.
\end{enumerate}
\end{proposition}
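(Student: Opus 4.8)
The plan is to reduce the entire statement to the arithmetic of the $2$-valuation vector $\underline{e}(A/K)$ via Corollary \ref{typetoei2}. Since $\mathrm{Aut}_k(A) \simeq \ZZ/2\ZZ$ by hypothesis, that corollary says $A/K$ is fully maximal exactly when $\underline{e}=\{e\}$ with $e \geq 2$, fully minimal exactly when the two entries of $\underline{e}$ differ, and mixed exactly when $\underline{e}=\{e\}$ with $e \in \{0,1\}$ and $r$ is even. So once I read off $e_i = {\rm ord}_2(o(z_i))$ from the $\NWNs(A/\FF_q)$ column of Table \ref{tab:g=2}, the type is forced.

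Two of the qualitative assertions require no case analysis at all. If $r$ is odd, then $A/K$ cannot be mixed by Corollary \ref{typetoei2}(3), since the mixed condition requires $r$ even; this gives the first clause of part (1). If $r$ is even, then $A/K$ is not fully minimal by Proposition \ref{Prevennotfmin}, using that $A$ is simple; this gives the first clause of part (2).

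For the explicit classification I would proceed case by case, recording the pair $\underline{e}$ for each row of Table \ref{tab:g=2}. In case $(1)$ all four numbers are primitive $8$th roots of unity, so $\underline{e}=\{3\}$ and $A$ is fully maximal; in cases $(2b)$ and $(3a)$ they are primitive $12$th roots, giving $\underline{e}=\{2\}$; case $(6)$ has primitive $24$th roots, $\underline{e}=\{3\}$; and case $(7b)$ has $\NWNs=(i,-i,i,-i)$, so $\underline{e}=\{2\}$ — all fully maximal. By contrast, case $(2a)$ pairs a primitive $6$th root with a primitive cube root, giving $\underline{e}=\{1,0\}$; cases $(5a),(5b)$ pair a primitive $10$th root with a primitive $5$th root, $\underline{e}=\{1,0\}$; and case $(7a)$ has $(1,1,-1,-1)$ with $\underline{e}=\{0,1\}$ — all fully minimal. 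Finally cases $(4a),(8a)$ give $\underline{e}=\{0\}$ and cases $(4b),(8b)$ give $\underline{e}=\{1\}$, each constant with $e\leq 1$ and $r$ even, hence mixed. Matching these outcomes against which rows of Table \ref{tab:g=2} occur for $r$ odd and for $r$ even reproduces the two lists in the statement.

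The one point needing more than a root-of-unity computation — and the step I expect to be the real obstacle — is that case $(3b)$ is absent from both lists, even though its normalized Weil numbers coincide with those of $(2b)$ and $(3a)$, so that its $\underline{e}=\{2\}$ would naively make it fully maximal as well. The resolution must be that no principally polarized $A$ in the isogeny class of case $(3b)$ has $\mathrm{Aut}_k(A) \simeq \ZZ/2\ZZ$, i.e.\ the hypothesis of the proposition is vacuous there. Establishing this is genuinely harder because it is invisible in the Weil numbers: it requires the classification of the geometric automorphism groups occurring in each isogeny class (the analysis behind Proposition \ref{Pmaincor}, noting that $(3b)$ only occurs for $p \equiv 1 \bmod 3$ with $p \geq 7$), together with a verification that each listed case does admit a ppav with $\mathrm{Aut}_k(A) \simeq \ZZ/2\ZZ$ so that the classification is not vacuous. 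I would isolate this automorphism-group bookkeeping as a separate lemma and quote it here, keeping the present proof centered on the $\underline{e}$ computation and Corollary \ref{typetoei2}.
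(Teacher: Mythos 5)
Your reduction to Corollary \ref{typetoei2} is exactly the paper's strategy, and your case-by-case computation of $\underline{e}(A/K)$ from the $\NWNs(A/\FF_q)$ column of Table \ref{tab:g=2} is correct in every row, as are the two quick qualitative deductions (not mixed for $r$ odd from Corollary \ref{typetoei2}(3); not fully minimal for $r$ even from Proposition \ref{Prevennotfmin}). Up to that point you have reproduced the intended argument.

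The one place you go astray is the exclusion of case $(3b)$, which you correctly identify as the only step not visible in the Weil numbers but then misdiagnose. The reason $(3b)$ is absent is not that every principally polarized $A$ in that isogeny class has too large an automorphism group; it is that the isogeny class of case $(3b)$ contains \emph{no} principally polarized abelian surface at all. This is a citation, not a computation: by \cite[Theorem 1]{HMNR} (Howe--Maisner--Nart--Ritzenthaler, on principally polarizable isogeny classes of abelian surfaces), the principal polarization hypothesis in the statement excludes exactly case $(3b)$ from Table \ref{tab:g=2}, and no automorphism-group bookkeeping is needed. Your proposed route --- extracting per-isogeny-class information about $\mathrm{Aut}_k(A)$ from the analysis behind Proposition \ref{Pmaincor} --- would not work as stated, since that proposition is an asymptotic count showing the proportion of $A$ with $\mathrm{Aut}_k(A)\not\simeq \ZZ/2\ZZ$ tends to zero, and says nothing about which isogeny classes those exceptional $A$ lie in. So replace your ``separate lemma'' by the reference to \cite[Theorem 1]{HMNR}; with that substitution the proof is complete and coincides with the paper's. (The non-vacuousness check you mention at the end is not needed: the proposition is a conditional classification and makes no existence claim.)
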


\begin{proof} 
By \cite[Theorem 1]{HMNR}, the principal polarization restriction excludes exactly case $(3b)$. 
%KEEP an isogeny class in Table \ref{g=2weilnumbers} does not contain a principally polarized surface if and only if $a_1^2-a_2=q$, $a_2<0$ and all prime diviors of $a_2$ are congruent to $1$ modulo $3$. This excludes case $(3b)$
%and no other cases.
Since $\mathrm{Aut}_k(A) \simeq \ZZ/2\ZZ$, the type 
of $A$ over $K$ is determined from $\underbar{e}(A/K)$ by Corollary \ref{typetoei2}. 
This can be computed from the \NWN found in Proposition \ref{g=2weilnumbers}.
\end{proof}

\begin{remark}
The sizes of the isogeny classes listed in Table \ref{tab:g=2} are not known.
From \cite{XYY}, one could conjecture that a supersingular abelian surface over $\FF_q$ most likely has mixed type.
%Keep: If $p$ is odd, results in \cite{XYY} indicate 
%that the majority of supersingular abelian surfaces over $\FF_{p^r}$ have the following form:
%simple and in line (7a) of Proposition \ref{g=2weilnumbers} when $r$ is odd; not simple and of the form 
%$E_1 \times E_2$ where $E_1,E_2$ are in lines (1a) and (1b) of Lemma \ref{g=1weilnumbers}
%when $r$ is even.  If $p$ is odd, 
%we thus expect that a typical supersingular abelian surface has mixed type.
\end{remark}

\subsection{Curves of genus $2$ with extra automorphisms}\mbox{}\\

By \cite{Igusa}, there are six equations that describe all genus $2$ curves $X/K$
such that ${\rm Aut}_k(X) \not \simeq \ZZ/2\ZZ$.  
The number of $k$-isomorphism classes of these $X/K$ which are supersingular 
is known \cite[Theorem 3.3]{IKO}. 
The twists of $X/K$ are studied
in \cite{cardona} and \cite{CN}.  
We determine the type for all supersingular genus $2$ curves $X$ with 
${\rm Aut}_k(X) \not \simeq \ZZ/2\ZZ$, over the smallest field $K = \FF_q$ containing the coefficients of 
their defining equation.
Let $\vert \Theta \vert$ denote the number of $K$-twists of $X$.
We first analyze the three equations which have no moduli parameters.

\begin{proposition}\label{prop:easy}
Let $p > 5$.
The types over $\FF_p$ of the following genus $2$ curves $X/\FF_p$ with ${\rm Aut}_k(X) \not \simeq \ZZ/2\ZZ$, which are 
supersingular under the listed condition on $p$, are as follows.
\begin{center}
    \begin{tabular}{| l | l | l | l | l | l |}
    \hline
 & Equation  & Condition & $\mathrm{Aut}_k(X)$ & $\vert \Theta\vert$ & Type \\  \hline
$1$ & $y^2 = x^5 - 1$ & $p \not \equiv 1 \bmod 5$ & $\ZZ/10\ZZ$ & $2$ & fully maximal \\ %\hline
$2$ & $y^2 = x^6 - 1$ & $p \equiv 2 \bmod 3$ & $2D_{12}$ & $7$ & mixed \\ %\hline
$3$ & $y^2 = x^5 - x$ & $p \equiv 5,7 \bmod 8$ & $\tilde{S}_4$ & $6$ & mixed \\ \hline
\end{tabular}
\end{center}
Here $D_n$ is the dihedral group of order $n$
and $\tilde{S}_4$ is a $2$-covering of $S_4$.
\end{proposition}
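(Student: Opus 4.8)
The plan is to reduce each case to two invariants: the $\FF_p$-parity of $X$ itself, read off from $\underline{e}(X/\FF_p)$ via the Remark following Definition~\ref{Des}, and the existence of a parity-changing twist, which is controlled by Corollary~\ref{Ctwistodd} together with the tools of Section~\ref{SsmallautX}. Since each $X$ is a hyperelliptic curve of genus $2$, \eqref{algtor} gives $\mathrm{Aut}_k(\mathrm{Jac}(X)) \simeq \mathrm{Aut}_k(X)$, so $X$ and $\mathrm{Jac}(X)$ have the same twists and type, and I work with $X$ directly. First I would record that each curve is supersingular under the stated congruence and compute $\NWNs(X/\FF_p)$. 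For curves $2$ and $3$ this is most efficient via the standard maps to elliptic quotients: $y^2=x^6-1$ covers $v^2=u^3-1$ through $u=x^2$, and $y^2=x^5-x$ likewise splits its Jacobian, so that $\mathrm{Jac}(X)$ is isogenous over $\FF_p$ to a product of supersingular elliptic curves. Since $r=1$ is odd and $p>5$, every supersingular elliptic factor over the prime field $\FF_p$ lies in case $W3$ of Table~\ref{tab:g=1}, with normalized Weil numbers $\{i,-i\}$ and $e=2$. Hence $\underline{e}(X/\FF_p)=\{2,2\}$ and $X$ has parity $+1$ in both cases, so the type hinges entirely on whether a parity-changing twist exists.

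For curve $1$, I expect $\underline{e}(X/\FF_p)=\{e\}$ with $e\geq 2$: the Jacobian has CM by $\QQ(\zeta_5)$, its normalized Weil numbers are roots of unity of order divisible by $5$, and \eqref{no01rodd} rules out $e\in\{0,1\}$ since $r$ is odd, forcing $e\geq 2$ and parity $+1$. Because $\vert\Theta\vert=2$, there is a single nontrivial twist. Using Lemma~\ref{LTcG} one checks that the two $\FF_p$-Frobenius conjugacy classes of $\mathrm{Aut}_k(X)\simeq\ZZ/10\ZZ$ are the class of $\mathrm{id}$ and the class of $\iota$: the order-$5$ part contributes no further classes, since $p\not\equiv 1\bmod 5$ makes multiplication by $p-1$ a bijection of $\ZZ/5\ZZ$, so its image exhausts one coset. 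The nontrivial twist is therefore the quadratic twist $X_\iota$, and by \eqref{negate} negating the normalized Weil numbers preserves the condition $\underline{e}=\{e\}$ with $e\geq 2$. Thus $X_\iota$ also has parity $+1$, every twist has parity $+1$, and $X$ is fully maximal.

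For curves $2$ and $3$, since $X$ has parity $+1$ it suffices to exhibit a single twist of parity $-1$, so that the trivial twist and this one witness the mixed type. As $X$ has period $M=2$ with $e_M=1$, Corollary~\ref{Ctwistodd}(1) shows that any parity-changing twist must have order $T$ with $\mathrm{ord}_2(T)>1$, that is $4\mid T$; in particular it cannot arise from $\iota$ or from any quadratic twist (consistent with Lemma~\ref{Levenparitychange}, whose parity-change criterion requires $r$ even). The parity-changing twist must therefore come from an automorphism of order divisible by $4$, which is present in $2D_{12}$ and in $\tilde{S}_4$ but not in the centre $\langle\iota\rangle$. For such an automorphism $g$ I would compute $T_g$ from its field of definition and the order of $G$ via Lemma~\ref{LTcG}, then determine the twisted relative Frobenius from Proposition~\ref{frobtwist} as $\pi_X\circ G_\phi^{-1}$, so that the normalized Weil numbers of $X'$ are obtained by multiplying those of $X$ by the eigenvalues of $G_\phi^{-1}$. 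Verifying that the resulting $\underline{e}(X'/\FF_p)$ is \emph{non-constant} then gives parity $-1$; alternatively one cross-checks against the explicit twist equations of \cite{cardona} and \cite{CN} by point-counting over $\FF_p$.

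The hard part will be the bookkeeping for curves $2$ and $3$: because $2D_{12}$ and $\tilde{S}_4$ are nonabelian of orders $24$ and $48$, enumerating the $\FF_p$-Frobenius conjugacy classes through Proposition~\ref{bij1} (and confirming that these number $7$ and $6$, matching $\vert\Theta\vert$) requires carefully tracking which automorphisms are defined over $\FF_p$, and this depends on the residue of $p$ modulo $8$ and modulo $12$. The genuine crux is pinning down the normalized Weil numbers of the distinguished order-$(\text{divisible by }4)$ twist precisely enough to certify that its $\underline{e}$ is non-constant, rather than constant of the form $\{e\}$ with $e\geq 2$; this is exactly the distinction between parity $-1$ and parity $+1$, and it is where the \emph{a priori} elegant reduction gives way to explicit computation with the twist equations in \cite{cardona} and \cite{CN}.
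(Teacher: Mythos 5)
Your framework matches the paper's in outline: establish the $\FF_p$-parity of $X$ itself, then decide whether some twist has the opposite parity. For curve $1$ your endgame is sound (two $\FF_p$-Frobenius classes, the nontrivial one represented by $\iota$, and negation preserves $\underline{e}=\{e\}$ with $e \geq 2$ by \eqref{negate}), but your justification of the input $\underline{e}(X/\FF_p)=\{e\}$ with $e\geq 2$ is wrong in one place and incomplete in another. The normalized Weil numbers of $y^2=x^5-1$ are \emph{not} roots of unity of order divisible by $5$: one finds $(a_1,a_2)=(0,0)$ when $p\equiv 2,3\bmod 5$ (primitive $8$th roots of unity, $\underline{e}=\{3,3\}$) and $(a_1,a_2)=(0,2p)$ when $p\equiv 4\bmod 5$ ($\pm i$, $\underline{e}=\{2,2\}$). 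Moreover \eqref{no01rodd} only excludes $\underline{e}=\{0\}$ and $\{1\}$; it does not exclude a non-constant $\underline{e}$, which would give parity $-1$, so constancy still has to be established (the paper does so by computing $(a_1,a_2)$ explicitly).

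The genuine gap is in curves $2$ and $3$. Having correctly reduced the problem to exhibiting one twist of parity $-1$, you stop exactly there: you note that such a twist must have order $T$ with $4\mid T$ and ``would'' arise from an automorphism of order divisible by $4$, and then defer the decisive verification to explicit computation with the twist equations of \cite{cardona} and \cite{CN}, which you yourself identify as the crux but do not carry out. The existence of an order-$4$ element of $2D_{12}$ or $\tilde S_4$ is necessary but far from sufficient: by Lemma \ref{Ltwistsimple} a twist of order divisible by $4$ can perfectly well preserve $\underline{e}=\{2,2\}$, so one must actually determine the Weil numbers of the candidate twist. The paper does this by reading off the pairs $(a_1,a_2)$ of all $\FF_p$-twists from the tables of \cite{CN} and verifying that $(0,-2p)$ (case $(7a)$ of Proposition \ref{g=2weilnumbers}, $\underline{e}=\{0,1\}$, parity $-1$) occurs among them; for curve $2$ this requires the additional observation, via a Jacobi sum argument, that some $t\in\FF_p$ has $t^2+4$ a non-square. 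Without that step your argument only shows that $X$ has parity $+1$, hence is fully maximal or mixed, not that it is mixed.
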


\begin{proof}
The equations and automorphism groups are found in \cite[Theorem 3.1]{CN}.
The supersingular condition is found in \cite[1.11-1.13]{IKO}.
For equation (1), $\vert \Theta\vert =2$ by \cite[Proposition 11]{cardona}.
For equation (2), when $p \equiv 2 \bmod 3$, then $-3 \not \in (\FF_p^*)^2$, 
so $\vert \Theta\vert =7$ by \cite[Proposition 16]{cardona}.
For equation (3), when $p \equiv 5,7 \bmod 8$, then $-2 \not \in (\FF_p^*)^2$, 
so $\vert \Theta\vert =6$ by \cite[Proposition 17]{cardona}.

The pairs $(a_1,a_2)$ which occur for the twists of $X$ are in \cite[Sections 3.1-3.3, Tables 5,9,6,7]{CN}.
%KEEP For the fourth column, see also Section 3.1 and Table 5 (for case 1), Section 3.3 and Table 9 (for case 2), and Section 3.2 and Tables 6 (for $p \equiv 7 \bmod 8$) and 7 (for $p \equiv 5 \bmod 8$) (for case 3) of \cite{CN}.
If $(a_1,a_2)=(0,2p)$, note that $\mathrm{Jac}(X) \sim_{\FF_p} E \oplus E$ where 
$E/\FF_p$ is in case $W3$ of Lemma \ref{g=1weilnumbers}, which has parity $1$.
Also, $(a_1,a_2)=(0,-2p)$ has parity $-1$ by case $(7a)$ of Proposition \ref{g=2weilnumbers}.

\begin{enumerate}
\item 
When $p \equiv 2,3 \bmod 5$, then $(a_1,a_2)=(0,0)$ for $X$ and $X_\iota$;
thus $X$ is fully maximal.
When $p \equiv -1 \bmod 5$, then $(a_1,a_2)=(0,2p)$ for $X$ and $X_\iota$;
thus $X$ is fully maximal.

\item 
When $p \equiv 2 \bmod 3$, let $\epsilon = (-1/p)$. 
The first two rows of \cite[Table 9]{CN} show that the parity $1$ case $(a_1,a_2)=(0,2p)$ occurs for $X$ or one of its $\FF_p$-twists, 
regardless of the value of $\epsilon$.
The third and fourth lines of \cite[Table 9]{CN} show that the parity $-1$ case $(a_1,a_2)=(0, -2p)$ 
occurs for $X$ or one of its $\FF_p$-twists, 
regardless of the value of $\epsilon$, as long as there exists $t \in \FF_p$ such that $t^2+4$ is not a square in $\FF_p^*$;
the existence of such a $t$ can be verified using a Jacobi sum argument.
So $X$ is mixed.

\item
If $p \equiv 5,7 \bmod 8$, then 
both $(0,2p)$ and $(0,-2p)$ occur as $(a_1,a_2)$ among
the twists of $X$, so $X$ is mixed. 
\end{enumerate} 
\end{proof}

Next, we analyze the three equations with moduli parameters. 
 
\begin{proposition}\label{hard}
Let $p > 5$. 
Any genus $2$ curve $X/\FF_q$ with ${\rm Aut}_k(X) \not \simeq \ZZ/2\ZZ$ is isomorphic over $k$ to one of equations (1)-(3) in Proposition \ref{prop:easy} or one of equations (4)-(6) below:
\begin{itemize}
\item[(4)] $y^2 = x^6 + ax^4 + bx^2 + 1$ where $a, b \in k$ are chosen such that $P(c,d) \neq 0$, where $c = ab$, $d = a^3 + b^3$, and $P(c,d) = (4c^3-d^2)(c^2-4d+18c-27)(c^2-4d-110c+1125)$;
\item[(5)] $y^2 = x^5 + x^3 + ax$, for $a \neq 0$, $1/4$, $9/100$;
\item[(6)] $y^2 = x^6 + x^3 + a$ for $p \neq 3$, $a \neq 0$, $1/4$, $-1/50$.
\end{itemize}
Let $q=p^r$ be such that $a,b \in K=\FF_{q}$.  The types over $\FF_q$ for equations (4)-(6) are as follows:
\begin{center}
    \begin{tabular}{| l | l | l | l |}
    \hline
 & $\mathrm{Aut}_k(X)$ & $\vert \Theta\vert$ & Type \\  \hline
$4$ & $V_4$ & $4$ & $\begin{cases} \text{fully maximal} & \mbox{ if } r \mbox{ is odd } \\ \text{mixed} & \mbox{ if } r \mbox{ is even } \end{cases}$ \\ \hline
$5$ & $D_8$ & $3$ or $5$  & $\begin{cases} \text{fully maximal} & \mbox{ if } r \mbox{ odd, } a \notin (K^*)^2 \\ \text{mixed} & \mbox{ otherwise } \end{cases}$ \\ \hline
$6$ & $D_{12}$ & $4$ or $6$  & $\begin{cases} \text{fully maximal} & \mbox{ if } 
q \equiv 2 \bmod 3 
\mbox{ and } a \in (K^*)^2  \\ \text{mixed} & \mbox{ otherwise } \end{cases}$ \\ \hline
\end{tabular}
\end{center}
\end{proposition}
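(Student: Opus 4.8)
The plan is to handle equations (4)--(6) by the same method used for (1)--(3) in Proposition~\ref{prop:easy}. Since a genus~$2$ curve is hyperelliptic, \eqref{algtor} gives $\Theta(X/K)=\Theta(\mathrm{Jac}(X)/K)$, so the type of $X$ over $K$ equals the type of $\mathrm{Jac}(X)$ over $K$ and it suffices to analyze the Jacobians. For each equation I would first record $\mathrm{Aut}_k(X)$ (namely $V_4$, $D_8$, $D_{12}$) together with the number $\vert\Theta\vert$ of $K$-twists. By Proposition~\ref{bij1} the twists are indexed by the $K$-Frobenius conjugacy classes of $\mathrm{Aut}_k(X)$, and the counts $\vert\Theta\vert\in\{3,4,5,6\}$ depend on arithmetic conditions on $a$: whether $a\in(K^*)^2$ (and $q\bmod 3$ for (6)) governs how these classes split and, via Lemma~\ref{LTcG}, the order of each twist. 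These counts are exactly those of \cite{cardona}, which I would quote directly.

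The core of the proof is to compute the parity $\delta$ of every $K$-twist. As in the proof of Proposition~\ref{prop:easy}, I would read from the tables of \cite{CN} the coefficient pairs $(a_1,a_2)$ realized by the twists of each $X$. For each such pair there are two subcases. If the twist has simple Jacobian over $K$, its parity is read off directly from Proposition~\ref{g=2weilnumbers} (Table~\ref{tab:g=2}). If it is non-simple, I would decompose $\mathrm{Jac}(X')\sim_K E_1\oplus E_2$ into supersingular elliptic factors and read $\underline{e}$ and $\delta$ from Lemma~\ref{g=1weilnumbers} (Table~\ref{tab:g=1}), exactly as the pairs $(0,\pm 2p)$ are treated in Proposition~\ref{prop:easy}. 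The type is then fully maximal, fully minimal, or mixed according to whether the multiset of parities $\{\delta\}$ over all twists is $\{+1\}$, $\{-1\}$, or contains both signs.

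The parity of $r$ and the square class of $a$ enter through which rows of Tables~\ref{tab:g=1} and~\ref{tab:g=2} are realized. For equation (4) the extra involution $x\mapsto -x$ is defined over the prime field, so every twist is $K$-bielliptic and $\mathrm{Jac}$ splits over $K$ into supersingular elliptic factors; for $r$ odd and $p>5$ each such factor must lie in case $W3$ (parity $+1$, $\underline{e}=\{2\}$), whence every twist has $\delta=+1$ and $X$ is fully maximal, while for $r$ even \cite{CN} shows both parities occur and $X$ is mixed. For equations (5) and (6) the bielliptic involutions are $K$-rational only for a suitable square class of $a$; when they are not, the corresponding twist has \emph{simple} Jacobian, whose parity is read from Table~\ref{tab:g=2} and can equal $-1$ even for $r$ odd (e.g.\ the rows $(0,q)$ or $(0,-2q)$). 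Combining this with Corollary~\ref{Ctwistodd} (a twist changes parity only if its order is even) and the twist orders from the first step, I would show that $X$ is fully maximal exactly when $r$ is odd and $a\notin(K^*)^2$ (resp.\ $q\equiv 2\bmod 3$ and $a\in(K^*)^2$), and mixed otherwise.

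The main obstacle is uniformity in the moduli $a$ (and $b$ for (4)): the type must be shown to depend only on $r\bmod 2$ and the stated square/congruence conditions, not on the particular parameter value. Concretely, one must verify from \cite{CN} that no family ever lands in a fully minimal configuration when the conditions for fully maximal fail — for instance ruling out, for $r$ even, configurations such as one factor in $W3$ and one with $\underline{e}=\{1\}$, which would force $\delta=-1$ on every twist, and this is where $\mathrm{Aut}_k(X)\simeq V_4$ together with the constraint $P(c,d)\neq 0$ is used — and dually confirming that whenever fully maximality fails a genuine parity-changing twist exists. As in the proof of Proposition~\ref{prop:easy}, producing the required opposite-parity twist may demand an arithmetic input: a Jacobi sum argument guaranteeing that a value of prescribed square class occurs among the coefficients. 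Assembling these existence statements uniformly across all parameters is the delicate step.
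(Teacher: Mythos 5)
Your proposal follows essentially the same route as the paper: quote \cite{CN} and \cite{cardona} for the equations, automorphism groups, and twist counts, read the coefficient pairs $(a_1,a_2)$ of all $K$-twists from the tables of \cite{CN}, and determine each parity from Table \ref{tab:g=1} (non-simple case, e.g.\ both factors in $W3$ when $r$ is odd and $p>5$) or Table \ref{tab:g=2} (simple case), concluding the type from the multiset of parities. The paper's proof is exactly this case analysis carried out explicitly, so the only difference is that you leave the table lookups as a plan while correctly flagging the uniformity-in-moduli verification as the remaining work.
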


\begin{proof}
The equations and automorphism groups can be found in \cite[Theorem 3.1]{CN}. 
In cases (5) and (6), the number $\vert \Theta\vert $ of twists of $X$ is determined in \cite[Propositions 12-13]{cardona}.
%Keep: see Prop 10 for case 4.
In case (4), by \cite[Section 3.6]{CN}, when $X$ is supersingular, then $\vert \Theta\vert =4$. 
The pairs $(a_1,a_2)$ for the twists of $X$ are in \cite[Sections 3.4-3.6, Tables 11-17]{CN}.  
%Keep Section 3.6 and Table 17 (for case 4), Section 3.5 and Tables 14-16 (for case 5), and Section 3.4 and Tables 11-13 (for case 6) of \cite{CN}, as well as Proposition 10 (for case 4), Proposition 12 (for case 5) and Proposition 13 (for case 6) of \cite{cardona}.
We determine the types over $\FF_{q}$ below:
\begin{itemize}
\item[(4)]
Since $\mathrm{Jac}(X) \sim_k E_1 \oplus E_2$, the 4 twists
of $X$ correspond to quadratic twists of either $E_1$ or $E_2$, or both. When $r$ is odd, $E_1$ and $E_2$ are both in case $W3$ of Lemma \ref{g=1weilnumbers}, so $X$ is fully maximal. 
When $r$ is even, $E_1$ and $E_2$ are either both in case $W1+$ (so $X$ is minimal) or both in case $W1-$ (so $X$ is maximal), depending on the $L$-polynomial of $E_1$.
Then $X$ is mixed since the quadratic twist swaps the two cases. 

\item[(5)] 
When $r$ is odd and $a \not\in (K^{\ast})^2$, then $X$ and its twists have $(a_1,a_2)$ equal to $(0,0)$ or $(0,2q)$. 
Since both cases have parity $1$, the curve $X$ is fully maximal.

When $r$ is odd and $a \in (K^{\ast})^2$, there are twists of $X$ with $(a_1,a_2)$ being both $(0,2q)$ (parity $1$) and $(0,-2q)$ (parity $-1$), 
so $X$ is mixed.
When $r$ is even, a similar argument shows that $X$ is mixed. 

\item[(6)]
When $q \equiv 2 \bmod 3$, note that $p \equiv 2 \bmod 3$ as well and $r$ is odd. 
Then $X$ and its twists have $(a_1,a_2)$ among $(0,2q)$, $(0,2\epsilon q)$ and $(0,-\epsilon q)$, where $\epsilon = 1$ if $a \in (K^*)^2$ and $\epsilon = -1$ otherwise. These curves have respective parities $1$, $\epsilon$, and $\epsilon$. So if $\epsilon = 1$, then $X$ is fully maximal and if $\epsilon = -1$, then $X$ is mixed. 

When $q \equiv 1 \bmod 3$ and $r$ is odd, then the coefficients $(a_1,a_2)$ of the twists 
include $(0,2q)$ and $(0,q)$ of parity $1$ and $(0,-2q)$ of parity $-1$, so $X$ is mixed.

When $q \equiv 1 \bmod 3$ and $r$ is even, let $\epsilon=\left(\frac{-3}{\sqrt{q}}\right)$. 
Then the possibilities for $(a_1,a_2)$ are $(\pm 4\epsilon\sqrt{q},6q)$ of parity $\pm\epsilon$, $(\pm2\epsilon\sqrt{q},3q)$ of parity $\mp\epsilon$, and $(0,-2q)$ of parity $-1$. So $X$ is mixed.
\end{itemize}
\end{proof}

\subsection{The condition $\mathrm{Aut}_k(A) \simeq \ZZ/2\ZZ$ is not restrictive when $p$ is odd}\mbox{}\\

For general $p$, $r$, and $g$, the structure of the typical automorphism group of a $g$-dimensional supersingular abelian variety $A$ over $K = \FF_{p^r}$ is unknown (cf.\ Remark \ref{Ronemoduli}). In this section, we resolve this question for $g=2$ and $p$ odd.

Let $g=2$ and let $A = (A,\lambda)$ be a principally polarized abelian surface. 
For $p \geq 3$, we prove that the proportion of $A$ over $\FF_{p^r}$ with 
${\rm Aut}_k(A) \not \simeq \ZZ/2\ZZ$ tends to zero as $r \to \infty$. 

Let ${\mathcal A}_2 = {\mathcal A}_2 \otimes \FF_p$ denote the moduli space whose points 
represent the objects $(A, \lambda)$ in characteristic $p$.
Let ${\mathcal A}_{2,ss} \subset {\mathcal A}_2$ denote the supersingular locus 
whose points represent supersingular $A$. 
Recall that $A$ is superspecial if and only if $A \simeq_k E_1 \oplus E_2$.
%Keep If $A=(A,\lambda)$ is superspecial, then $\lambda$ may or may not be the product polarization.
%For $(A, \mu)$ supersingular, the following two conditions are equivalent:
%$A \simeq_k {\rm Jac}(X)$ for a singular curve $X$ of genus $2$ with compact type 
%and $A$ is superspecial with the product polarization.

\begin{proposition}\label{Pmaincor}
If $p \geq 3$,
then the proportion of $\FF_{p^r}$-points in $\mathcal{A}_{2,ss}$ 
which represent
$A$ with ${\rm Aut}_k(A) \not \simeq \ZZ/2\ZZ$ tends to zero as $r \to \infty$. 
\end{proposition}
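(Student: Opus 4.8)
The plan is to regard $\mathcal{A}_{2,ss}$ as a reduced scheme of dimension $1$ over $\FF_p$, to show that the locus $W \subseteq \mathcal{A}_{2,ss}$ parametrizing $A$ with ${\rm Aut}_k(A) \not\simeq \ZZ/2\ZZ$ is finite, and then to compare point counts: since $\#W(\FF_{p^r})$ is bounded independently of $r$ while $\#\mathcal{A}_{2,ss}(\FF_{p^r}) \to \infty$, the proportion $\#W(\FF_{p^r})/\#\mathcal{A}_{2,ss}(\FF_{p^r})$ tends to $0$. This confirms the heuristic of Remark \ref{Ronemoduli}. First I would record that $\dim \mathcal{A}_{2,ss} = \lfloor 2^2/4 \rfloor = 1$ by the theory of Li--Oort \cite{lioort}. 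Because $[-1]$ is always an automorphism, $\ZZ/2\ZZ \subseteq {\rm Aut}_k(A)$ for every $A$, and the condition ${\rm Aut}_k(A) \not\simeq \ZZ/2\ZZ$ says that the automorphism group scheme of the universal family has order $> 2$; this is closed by upper semicontinuity, so $W$ is a closed subscheme, and it remains to prove $\dim W = 0$.

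To show $W$ is finite I would split $A$ according to whether it is decomposable. A principally polarized supersingular $A$ that is a product of elliptic curves must be $E_1 \times E_2$ with $E_1, E_2$ supersingular, and there are only finitely many supersingular elliptic curves over $k$ by Lemma \ref{g=1weilnumbers}, so this case contributes finitely many points. Otherwise $A \simeq {\rm Jac}(X)$ for a smooth genus-$2$ (hence hyperelliptic) curve $X$, and \eqref{algtor} gives ${\rm Aut}_k(A) \simeq {\rm Aut}_k(X)$; thus ${\rm Aut}_k(A) \not\simeq \ZZ/2\ZZ$ forces ${\rm Aut}_k(X) \not\simeq \ZZ/2\ZZ$, so by Igusa's classification \cite{Igusa} the curve $X$ lies in one of the six families of Propositions \ref{prop:easy} and \ref{hard}. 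The three families without moduli give finitely many points. In each of the three families with moduli one has ${\rm Aut}_k(X) \supseteq \ZZ/2\ZZ \times \ZZ/2\ZZ$, whence $\mathrm{Jac}(X) \sim_k E_1 \oplus E_2$ by \cite[Theorem B]{kanirosen}; supersingularity forces the $j$-invariants $j(E_1), j(E_2)$ into the finite supersingular set, and since these $j$-invariants are non-constant (indeed, for the two-moduli family, generically finite) as functions of the parameters, only finitely many parameter values remain. Hence $W$ is finite, and $\#W(\FF_{p^r}) \le \#W(k)$ is bounded uniformly in $r$.

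It then remains to prove that $\#\mathcal{A}_{2,ss}(\FF_{p^r})$ grows, and in fact dominates this fixed bound. Here I would exhibit a single geometrically irreducible curve $C \subseteq \mathcal{A}_{2,ss}$ defined over $\FF_p$ --- for example the image of a Moret--Bailly pencil of principally polarized supersingular abelian surfaces --- so that the Weil bound gives $\#C(\FF_{p^r}) \gg p^r$ for every $r$. Then $\#\mathcal{A}_{2,ss}(\FF_{p^r}) \ge \#C(\FF_{p^r}) \gg p^r$, and the proportion is $O(p^{-r}) \to 0$, which finishes the proof.

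The main obstacle is precisely this last growth estimate, and specifically making it hold for all large $r$ rather than only for even $r$. The natural charts on $\mathcal{A}_{2,ss}$ are defined over $\FF_{p^2}$, where the supersingular elliptic curves and the superspecial points live, and $\FF_p$-Frobenius may permute the geometric components of $\mathcal{A}_{2,ss}$ nontrivially. If every component lay in a Frobenius orbit of size $> 1$, then for $r$ coprime to all these orbit sizes the set $\mathcal{A}_{2,ss}(\FF_{p^r})$ would reduce to finitely many intersection points, and the argument would collapse (the superspecial points, though numerous, are themselves in $W$ and so cannot help the denominator). Thus the crux is to guarantee a genuinely $\FF_p$-rational, geometrically irreducible one-dimensional piece inside $\mathcal{A}_{2,ss}$; once such a $C$ is produced, the dimension and finiteness inputs above close the argument.
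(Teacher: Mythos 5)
Your overall strategy coincides with the paper's: show that the locus $W \subset \mathcal{A}_{2,ss}$ of points with extra automorphisms is zero\-/dimensional, and compare with the growth of $\#\mathcal{A}_{2,ss}(\FF_{p^r})$. For the finiteness of $W$ you take a genuinely different route. The paper reduces to the same three cases (Jacobians, products $E_1\oplus E_2$, restrictions of scalars), quotes \cite[Proposition 1.3]{IKO} to conclude that supersingular genus-$2$ curves with ${\rm Aut}_k(X)\not\simeq \ZZ/2\ZZ$ are superspecial, and then counts superspecial points and their $\FF_q$-models via class numbers, getting the explicit bound $\ll p^3$. Your argument via \cite[Theorem B]{kanirosen} and the $j$-invariants of the elliptic quotients proves the same finiteness for fixed $p$ (which is all the statement needs) but gives no uniformity in $p$. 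The one point needing care is family (4) of Proposition \ref{hard}: ``generically finite'' is not enough to rule out a positive-dimensional fibre over a supersingular pair $(j(E_1),j(E_2))$; you should say quasi-finite, which holds because a bielliptic genus-$2$ curve is determined up to finitely many choices by its two elliptic quotients together with the gluing of their $2$-torsion.

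The genuine gap is the one you flag yourself: the lower bound $\#\mathcal{A}_{2,ss}(\FF_{p^r}) \gg p^r$ for \emph{all} large $r$, in particular odd $r$. As written your proposal does not prove this, and without it the proportion statement does not follow; your worry that $\FF_p$-Frobenius might permute all geometric components of $\mathcal{A}_{2,ss}$ nontrivially is precisely what must be excluded. (The paper is terse here as well: it records only the upper bound $\ll p^{r+2}$ and the component description from \cite{achterhowe} and \cite{KO}, and the needed lower bound is left implicit in those references.) The gap can be closed along the lines you indicate: there is always a supersingular elliptic curve $E$ with $j(E)\in\FF_p$ (for $p=2,3$ take $j=0$; for $p>3$ this follows from Deuring's theory, e.g.\ from the existence of an elliptic curve over $\FF_p$ with trace of Frobenius equal to $0$), and the Moret--Bailly pencil attached to $E\times E$ yields a non-constant morphism $\mathbb{P}^1\to\mathcal{A}_{2,ss}$ defined over $\FF_p$ (any form of $\mathbb{P}^1$ over a finite field is trivial) which is quasi-finite onto its image; that image then has at least $(p^r+1)/d$ points over $\FF_{p^r}$ for every $r$, with $d$ depending only on $p$. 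Until such a curve is actually produced, the key growth estimate remains unproved in your write-up.
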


\begin{proof}
As observed in \cite[Section 9]{achterhowe},
$\vert  \mathcal{A}_{2,ss}(\FF_{p^r})\vert  \ll p^{r+2}$, 
where the notation $f(q) \ll g(q)$ means that there is a constant $C >0$ 
such that $\vert f(q)\vert  \leq C\vert g(q)\vert $ for all sufficiently large $q$.
This is because
each irreducible component of ${\mathcal A}_{2,ss}$ is geometrically isomorphic to $\mathbb{P}^1$ \cite[proof of Corollary 4.7]{oortsub}, and the number of irreducible components of $\mathcal{A}_{2,ss}$ equals the class number $H_2(1,p)$ 
\cite[Theorem 5.7]{KO}, which is $\ll p^2$ by \cite{HashI}, 
see also \cite[Remark 2.17]{IKO}.

By \cite[Theorem 3.1]{GGR}, an $\FF_{p^r}$-point $A$ in $\mathcal{A}_{2,ss}$ is one of the following canonically principally polarized objects:
(i) the Jacobian of a smooth supersingular curve $X$ over $\FF_{p^r}$ of genus $2$; 
(ii) the sum $E_1 \oplus E_2$ of two supersingular elliptic curves over $\FF_{p^r}$;
(iii) the restriction of scalars $\mathrm{Res}_{\FF_{p^{2r}}/\FF_{p^r}}(E)$ of a
supersingular elliptic curve $E/\FF_{p^{2r}}$.   
By \cite[Section 9]{achterhowe}, the number of objects in cases (ii) and (iii) is 
$\ll p^2$.
%Keep (The supersingular abelian varieties in cases (ii) and (iii) are superspecial.)

Thus, it suffices to restrict to case (i). 
Since $X$ is hyperelliptic, the isomorphism $A \cong_k \mathrm{Jac}(X)$ descends to $\FF_{p^r}$
by \cite[Appendix]{lauter}.
By \eqref{algtor},
${\rm Aut}_k({\rm Jac}(X)) \simeq {\rm Aut}_k(X)$. 
The arithmetic Torelli map is injective on $\FF_{p^r}$-points representing smooth curves
\cite[Corollary 12.2]{milneJac}.
So for case (i), it suffices to bound the number of supersingular curves $X$ of genus $2$ with ${\rm Aut}_k(X) \not \simeq \ZZ/2\ZZ$, which are described in cases (1)-(6) of 
Propositions \ref{prop:easy} and \ref{hard} when $p>5$; the cases $p=3$ and $p=5$ can be handled similarly.
In case (1), there is at most one $k$-isomorphism class of curves, with at most four twists over $\FF_{p^r}$. 

In cases (2)-(6), the curves are superspecial by \cite[Proposition 1.3]{IKO}. 
The singularities of $\mathcal{A}_{2,ss}$ are ordinary $(p+1)$-points which occur precisely at the superspecial points \cite[page 193]{kob}. 
There are $\ll p^2$ irreducible components of $\mathcal{A}_{2,ss}$,
each containing $p^2+1$ superspecial points by \cite[page 154]{KO}.
So the number of superspecial points in 
$\mathcal{A}_{2,ss}(k)$ is $\ll p^2(p^2+1)/(p+1) \ll p^3$.  
(See \cite[Theorem 2]{IbKat} for an exact formula in terms of class numbers.)

Applying \cite[Lemma 9.1]{achterhowe}, the number of $\FF_q$-models for 
superspecial curves of genus $2$ is also $\ll p^3$.  
This completes the proof since ${\rm lim}_{r \to \infty} p^3/p^r = 0$.
\end{proof}

\begin{remark}
The conclusion of Corollary \ref{Pmaincor} is false when $p=2$ by \cite[Theorem 3.1]{VdGVdV92}. 
%KEEP where
%the authors determine the number of supersingular genus $2$ 
%curves $X/\FF_{q}$ when $q=2^r$
%along with $\vert X(\FF_{q})\vert$ and $\vert {\rm Aut}_{\FF_q}(X) \vert$.
%%In fact, $\# {\mathcal M}_{2,ss} (\FF_{q})$ is about $4q$, while 
%%$\# {\mathcal M}^{\rm \#Aut=2}_{2,ss} (\FF_{q})$ is about $4q/5$ or $5q/6$.
\end{remark}

%%%%%%%%%%%%%%%%%%%%%%%%%%%%%%%%%%%%%%%%%%%%%

\section{Analysis in low  dimension: genus $3$ curves for $p=2$} \label{SVR}

Let $p=2$ and $k = \overline{\mathbb{F}}_2$. For $c,d \in k^*$, 
consider the generalized Artin-Schreier curve $X_{c,d}$ with affine equation
\begin{equation}\label{Eartsch}
X_{c, d}: Z^4 + (1 + c) Z^2 + c Z = d S^3.
\end{equation}

The cover $\gamma:X_{c, d} \to {\mathbb P}^1$, taking $(Z, S) \mapsto S$ is ramified only 
above $S=\infty$, where it is totally ramified.  The filtration of higher ramification groups 
trivializes at index $3$.  So by the Riemann-Hurwitz formula, $X_{c, d}$ has genus $3$.
By Lemma \ref{otherquotients}, $X_{c, d}$ is supersingular.
Let $q = 2^r$ be such that $c,d \in K=\FF_q$.

In the main result of the section, we determine the type of $X_{c, d}$ over $K$.
To state this, we set some notation.
Let $K'=\FF_q(h)$, where $h \in \FF_{q^2}$ is such that $h^2+h=c$. 
Then $h \in \FF_q$ if and only if ${\rm Tr}_r(c)=0$, 
where ${\rm Tr}_{r}:\FF_{2^r} \to \FF_2$ is the trace map.  Let $q'=2^{r'}=|K'|$.

\begin{theorem}\label{mainthm}
Let $X_{c, d}$, $r$ and $h$ be as defined above.
\begin{enumerate}
\item If $r$ is odd, then $X_{c, d}/K$ is fully maximal if $h \in \FF_{q}$ and mixed if $h \not\in \FF_{q}$. 
\item If $r \equiv 2 \bmod 4$, then $X_{c, d}/K$ is mixed if $h \in \FF_{q}$ and fully minimal if $h \not\in \FF_{q}$.
\item If $r \equiv 0 \bmod 4$, then $X_{c, d}/K$ is fully minimal.
\end{enumerate}
Moreover, $\mathrm{Jac}(X_{c, d})$ has the same type as $X_{c, d}$ over $K$, 
unless $r \equiv 0 \bmod 4$ and $h \in \FF_{q}$, in which case $\mathrm{Jac}(X_{c, d})$ is mixed.
\end{theorem}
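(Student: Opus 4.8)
The plan is to deduce the final assertion from Lemma~\ref{xjac}, which says that the types of $X_{c,d}$ and $\mathrm{Jac}(X_{c,d})$ over $K$ differ precisely when \emph{all} of the following hold: $X_{c,d}$ is non-hyperelliptic, $\mathrm{Jac}(X_{c,d})$ is mixed, $r$ is even, and $\underline{e}(X_{c,d}/K)=\{e\}$ with $e\leq 1$. First I would record that $X_{c,d}$ is non-hyperelliptic, since each of the three quotients $X_{c,d}/\tau_i$ by the involutions $\tau_i$ in the covering group $\ZZ/2\ZZ\times\ZZ/2\ZZ$ has genus $1$ rather than $0$; thus no $\tau_i$ is a hyperelliptic involution, and $X_{c,d}$ admits no degree-$2$ map to $\mathbb{P}^1$. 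This makes the first hypothesis of Lemma~\ref{xjac} automatic, so only the last three remain to be checked against parts (1)--(3) of the theorem.

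Next I would dispose of every case in which $X_{c,d}$ is not fully minimal. If $r$ is odd (part (1)), the condition ``$r$ even'' fails, so the types agree. If $X_{c,d}$ is mixed (part (1) with $h\notin\FF_q$, or part (2) with $h\in\FF_q$), then by \eqref{algtor} the Jacobian inherits all $K$-twists of $X_{c,d}$ together with at most the extra twist by $\iota$; since the twists of $X_{c,d}$ already realize both parities and a twist has the same parity as its Jacobian, $\mathrm{Jac}(X_{c,d})$ is again mixed and the types agree. This leaves exactly the fully minimal cases: part (2) with $h\notin\FF_q$, and all of part (3). In each of these $r$ is even and $X_{c,d}$ has $K$-parity $-1$, so by the negation rule \eqref{negate} the two remaining conditions of Lemma~\ref{xjac} collapse to the single requirement $\underline{e}(X_{c,d}/K)=\{0\}$: a constant $\underline{e}=\{e\}$ with $e\geq 1$ would force parity $+1$, whereas $\underline{e}=\{0\}$ forces $\underline{e}(\mathrm{Jac}(X_{c,d})_\iota)=\{1\}$ and hence makes $\mathrm{Jac}(X_{c,d})$ mixed.

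It therefore remains to determine, among the fully minimal cases, exactly when $\underline{e}(X_{c,d}/K)=\{0\}$, and here I would use the decomposition $\mathrm{Jac}(X_{c,d})\sim E_1\oplus E_2\oplus E_3$ into the quotient elliptic curves $E_1\colon U^2+cU=dS^3$, $E_2\colon V^2+(1+h)V=dS^3$, and $E_3\colon W^2+hW=dS^3$, as in Remark~\ref{R22twist}. When $h\in\FF_q$ (which among the fully minimal cases forces $r\equiv 0\bmod 4$), all three $E_i$ are defined over $K$, and the normalized Weil number computation carried out in the proof of part (3) shows each $E_i$ is minimal over $K$, so $\underline{e}(X_{c,d}/K)=\{0\}$ and Lemma~\ref{xjac} yields that $\mathrm{Jac}(X_{c,d})$ is mixed. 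When $h\notin\FF_q$, only $E_1$ is defined over $K$, while $E_2$ and $E_3$ are interchanged by $K$-Frobenius and descend to the abelian surface $\mathrm{Res}_{K_2/K}(E_2)$, whose normalized Weil numbers are square roots of those of $E_2/K_2$; taking square roots raises the $2$-valuation of the order, so this summand contributes $e$-values strictly larger than the value $e_1$ from $E_1$, making $\underline{e}(X_{c,d}/K)$ non-constant and the types coincide. I expect the main obstacle to be exactly this $h\notin\FF_q$ computation: one must control the square-root descent carefully, using the claim \eqref{Eambiguity} that $\rho_1=0$ to resolve the sign ambiguity, in order to confirm that the $e$-values coming from the restriction of scalars genuinely differ from $e_1$ and hence that $\underline{e}$ fails to be constant.
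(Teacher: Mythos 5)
Your proposal addresses only the final ``Moreover'' clause of the theorem. Parts (1)--(3) --- the determination of whether $X_{c,d}$ itself is fully maximal, mixed, or fully minimal --- are taken as given throughout: you write of ``the fully minimal cases: part (2) with $h\notin\FF_q$, and all of part (3)'' and of ``the normalized Weil number computation carried out in the proof of part (3)'' as if these were already established. They are the bulk of the paper's proof. Establishing them requires (a) computing the $2$-valuation vector $\underline{e}(X_{c,d}/K)$ in all six cases via the decomposition $\mathrm{Jac}(X_{c,d})\sim_{K'}E_1\oplus E_2\oplus E_3$ and the descent from $K'$ to $K$ (Proposition \ref{Pg3p2vecE}); (b) enumerating the $K$-twists of $X_{c,d}$ by computing the $K$-Frobenius conjugacy classes of ${\rm Aut}_k(X_{c,d})$, which depends on whether $c=1$ and on the fields of definition of $\upsilon$, $\sigma$, $\kappa$ (Lemma \ref{frobclasses}); and (c) deciding for each twist whether it changes the parity, using Lemma \ref{Loddnotparity}, Lemma \ref{Levenparitychange}, Remark \ref{R22twist}, and in the $h\notin\FF_q$ cases the vanishing $\rho_1=0$ from \eqref{Eambiguity} to resolve the square-root ambiguity. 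None of this appears in your argument, so the proposal is not a proof of the stated theorem.

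For the part you do treat, your reduction via Lemma \ref{xjac} is essentially the paper's route and is sound: non-hyperellipticity (which you should justify by noting that any involution of $X_{c,d}$ lies in the Sylow $2$-subgroup $S_0$ by Lemma \ref{genericaut}, and all three nontrivial involutions of $S_0$ have genus-$1$ quotients), plus the observation that the only cases needing scrutiny are the fully minimal ones with $r$ even, where the issue is whether $\underline{e}=\{0\}$. However, your heuristic for the $h\notin\FF_q$ cases --- that taking square roots under the quadratic descent yields $e$-values \emph{strictly larger} than $e_1$ --- is false as stated: for $r\equiv 0\bmod 4$ and $h\notin\FF_q$ the correct vector is $\underline{e}=\{0,0,1\}$, so one of the descended values equals $e_1=0$ (a square root of $1$ can be $1$, and a square root of $\zeta_3$ can be $\zeta_3^2$, neither of which raises the $2$-valuation). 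The conclusion that $\underline{e}$ is non-constant survives, but only because of the parity constraint $\rho_1=0$ from \eqref{Eambiguity}, which forces at least one descended root to pick up a factor of $-1$; this is exactly the computation in Proposition \ref{Pg3p2vecE} that you would need to carry out rather than gesture at.
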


\begin{remark}
\begin{enumerate}
\item If $d=d_1 d_2^3$ with $d_1,d_2 \in K$, there is an $\FF_q$-isomorphism $X_{c,d} \stackrel{\simeq}{\to} X_{c,d_1}$, 
taking $(Z, S) \mapsto (Z, S/d_2)$.
So $d$ can be replaced by any representative of the coset $d(K^*)^3$ in $K^*$;
if $r$ is odd, then one can set $d=1$.

\item The supersingular locus $S_3$ of the moduli space ${\mathcal M}_3 \otimes \FF_2$ has dimension $2$.  By part (1), the curves in the family $X_{c,d}$ are represented by a $1$-dimensional subspace of $S_3$. 
This $1$-dimensional family is the same as the one given in \cite[pages 56-57]{vianarodriguez} by 
\[X'_{a,b}: x+y + a(x^3y + xy^3) + bx^2y^2=0,\]
via the change of coordinates:
%R = 1 + \frac{1}{axy(x+y)}, \ 
$c=a/b, \ d = a^3/b, \ S = 1/a(x+y), \ Z = x/(x+y)$.

\item
The proportion of $c \in \FF_q^*$ for which 
$X_{c, d}$ is mixed is a bit larger than $\frac{1}{2}$ when $r$ is odd and
a bit smaller than $\frac{1}{2}$ when $r \equiv 2 \bmod 4$
since $\#\{c \in \FF_q^* \mid {\rm Tr}_r(c) = 1\} =\frac{q}{2}$.

\end{enumerate}
\end{remark}

\subsection{Decomposition of the Jacobian}\mbox{}\\

Define the values
\begin{equation}\label{cidef}
c_1=d/c^2, \ c_2 = d/(h+1)^2, \ {\rm and} \ c_3=d/h^2,
\end{equation}
and corresponding elliptic curves
\begin{equation}\label{E123eq}
E_1: R^2 + R = c_1S^3, \ E_2:T^2 + T = c_2 S^3, \ E_3: U^2 + U = c_3 S^3.
\end{equation}
Also, define commuting order $2$ automorphisms on $X_{c, d}$ by:
\begin{equation}\label{E123aut}
\tau:(S, Z) \mapsto (S, Z+1) \ {\rm and} \ \upsilon:(S, Z) \mapsto (S, Z+h).
\end{equation}
Note that $\tau$ is defined over $K=\FF_q$ and $\upsilon$ is defined over $K'$.

\begin{lemma}\label{otherquotients}
\begin{enumerate}
\item Over $K$, the quotient of $X_{c,d}$ by $\tau$ is $E_1$.\\
Over $K'$, the quotient of $X_{c, d}$ by $\upsilon$ (resp.\ $\tau \upsilon$) is $E_2$ (resp.\ $E_3$).
\item Hence, ${\rm Jac}(X_{c, d}) \sim_{K'} E_1 \oplus E_2 \oplus E_3$ and $X_{c, d}$ is supersingular.
\item Thus $L(X_{c, d}/K',T) = L(E_1/K',T)L(E_2/K',T)L(E_3/K',T)$. 
\end{enumerate}
\end{lemma}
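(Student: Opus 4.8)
The plan is to exhibit each elliptic quotient explicitly by recognizing the left-hand side of \eqref{Eartsch} as a composition of two additive (Artin--Schreier) polynomials in characteristic $2$, and then to deduce (2) and (3) formally from the Kani--Rosen isogeny and Tate's theorem.

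For part (1), write $\Lambda(Z) = Z^4 + (1+c)Z^2 + cZ$ for the additive polynomial on the left of \eqref{Eartsch}. Using $h^2 + h = c$, one checks that $\Lambda$ vanishes on the $\FF_2$-vector space $\{0, 1, h, 1+h\}$, so the four translations $Z \mapsto Z + a$ realize $G := \langle \tau, \upsilon \rangle \simeq \ZZ/2\ZZ \times \ZZ/2\ZZ$ as the Galois group of $\gamma \colon X_{c, d} \to \mathbb{P}^1$. To identify the quotient by a given order-$2$ subgroup, I would factor $\Lambda = \Lambda_2 \circ \Lambda_1$ with $\Lambda_1$ the additive polynomial whose kernel is that subgroup. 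For $\tau$, take $\Lambda_1(Z) = Z^2 + Z$ and $\Lambda_2(W) = W^2 + cW$; the substitution $R = (Z^2+Z)/c$ then turns \eqref{Eartsch} into $R^2 + R = c_1 S^3$, i.e.\ $E_1$, defined over $K$ because $c, d \in K$. For $\upsilon$ take $\Lambda_1(Z) = Z^2 + hZ$, $\Lambda_2(W) = W^2 + (1+h)W$ and $T = (Z^2+hZ)/(1+h)$ to obtain $E_2$; for $\tau\upsilon$ take $\Lambda_1(Z) = Z^2 + (1+h)Z$, $\Lambda_2(W) = W^2 + hW$ and $U = (Z^2+(1+h)Z)/h$ to obtain $E_3$. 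Both of these are defined over $K' = \FF_q(h)$, as claimed. In each case the composition identity $\Lambda_2 \circ \Lambda_1 = \Lambda$ is verified using $h^2 = h + c$ and $(1+h)^2 = 1 + h + c$.

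Parts (2) and (3) then follow formally. Since $G$ acts over $K'$ with quotient $X_{c, d}/G = \mathbb{P}^1$ (the $S$-line, with trivial Jacobian), the Kani--Rosen theorem \cite[Theorem B]{kanirosen} gives ${\rm Jac}(X_{c, d}) \sim_{K'} E_1 \oplus E_2 \oplus E_3$. Each $E_i$ has the form $y^2 + y = c_i x^3$ with $c_i \in k^*$, hence has Hasse invariant $a_1 = 0$ and is supersingular; thus ${\rm Jac}(X_{c, d})$ is geometrically isogenous to a product of supersingular elliptic curves, and $X_{c, d}$ is supersingular by Theorem \ref{propssAV}(2). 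For (3), isogenous abelian varieties over $K'$ have equal characteristic polynomials of Frobenius by Tate's theorem \cite[Theorem 1]{tate}, so $P({\rm Jac}(X_{c, d})/K', T)$ is the product of the $P(E_i/K', T)$; rewriting via $P(\cdot, T) = T^{2g} L(\cdot, T^{-1})$ yields the stated factorization of $L(X_{c, d}/K', T)$.

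The main obstacle is the explicit quotient computation in part (1): the content lies in recognizing the left side of \eqref{Eartsch} as a composite of two Artin--Schreier polynomials for each of the three order-$2$ subgroups of $G$ and choosing the normalizing scalar in the substitution so that the image is exactly the given $E_i$. Once the three quotient maps are established, the remaining steps are routine invocations of Kani--Rosen, the characteristic-$2$ supersingularity criterion, and Tate's isogeny theorem.
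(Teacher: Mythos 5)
Your proposal is correct and follows essentially the same route as the paper: the substitutions $R=(Z^2+Z)/c$, $T=(Z^2+hZ)/(1+h)$, $U=(Z^2+(1+h)Z)/h$ are exactly the paper's invariant functions $R_1=Z(Z+1)$, $T_1=Z(Z+h)$, $U_1=Z(Z+(h+1))$ (rescaled), merely repackaged as factorizations of the additive polynomial $\Lambda$, and parts (2)--(3) likewise go through Kani--Rosen and the isogeny invariance of the characteristic polynomial. The only cosmetic difference is that you justify supersingularity of the $E_i$ via the vanishing Hasse invariant (equivalently $j=0$ in characteristic $2$) where the paper cites the Deuring--Shafarevich formula to get $2$-rank $0$; both are standard and correct.
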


\begin{proof}
\begin{enumerate}
\item The involution $\tau$ fixes the function $R_1 = Z(Z+1)$. 
Similarly, the involutions $\upsilon$ and $\tau \upsilon$ fix the functions $T_1=Z(Z+h)$ and $U_1=Z(Z+(h+1))$ respectively.
Direct calculations show that: 
\begin{eqnarray*}
R_1^2+cR_1 & = & Z^4+(1+c)Z^2 + cZ = d S^3;\\
T_1^2+(h+1)T_1 & = & Z^4+h^2Z^2 + (h+1)(Z^2 + hZ)= d S^3;\\
U_1^2+hU_1 &= &Z^4+(h+1)^2 Z^2 + h(Z^2+(h+1)Z) = d S^3.
\end{eqnarray*}
Setting $R_1=cR$, $T_1=(h+1) T$, and $U_1=hU$, then
\begin{equation*}
R^2+R = (d/c^2) S^3, \ T^2+T=(d/(h+1)^2)S^3, \ U^2+U=(d/h^2)S^3.
\end{equation*}

\item The decomposition is immediate from part (1) and \cite[Theorem~B]{kanirosen}.
By the Deuring-Shafarevich formula, $E_1,E_2,E_3$ have $2$-rank $0$ and hence are supersingular.
Thus $X_{c, d}$ is supersingular by Theorem \ref{propssAV}. 

\item This is immediate from part (2). 
\end{enumerate}
\end{proof}

%KEEP An equation for the $\ZZ/2\ZZ$-cover $\gamma_\tau:X_{c, d} \to E_1$ is $Z^2+Z=cR$.

\subsection{The \NWN of $E_1$, $E_2$, and $E_3$} \label{SSLpolyEab} \mbox{}\\

\begin{lemma} \label{LbasecaseE}
The elliptic curve $E_\circ: R^2+R=S^3$ is maximal over $\FF_{2^2}$ and
\[L(E_\circ/\FF_2, T)=1+2T^2=(1-(\sqrt{2} i) T)(1-(-\sqrt{2} i) T).\] 
\end{lemma}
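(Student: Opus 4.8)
The plan is to read off $L(E_\circ/\FF_2,T)$ from a direct $\FF_2$-point count and then base change to $\FF_{2^2}$. First I would confirm that $E_\circ\colon R^2+R=S^3$ is a (supersingular) elliptic curve: the degree-$2$ map $(R,S)\mapsto S$ to $\PP^1$ is an Artin--Schreier cover ramified only over $S=\infty$, and since the pole order there is $3$ (prime to $p=2$) the different exponent is $(3+1)(p-1)=4$, so Riemann--Hurwitz gives $2g-2=2(-2)+4=0$, i.e.\ $g=1$. It carries the rational point at infinity, hence is an elliptic curve, and it is supersingular, being the $c_1=1$ specialization of $E_1$ in Lemma \ref{otherquotients} (whose $2$-rank is $0$ by Deuring--Shafarevich).

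Next I would count $|E_\circ(\FF_2)|$. For $S=0$ the equation is $R^2+R=0$, with solutions $R\in\{0,1\}$, giving two affine points; for $S=1$ the equation is $R^2+R=1$, which has no solution in $\FF_2$ since $x^2+x=0$ for every $x\in\FF_2$. Adding the point at infinity yields $|E_\circ(\FF_2)|=3$. Writing $L(E_\circ/\FF_2,T)=1-\beta T+2T^2$ and applying Corollary \ref{pointscurve} gives $3=2+1-\beta$, so $\beta=0$ and $L(E_\circ/\FF_2,T)=1+2T^2$. The Weil numbers $\alpha,\bar\alpha$ then satisfy $\alpha+\bar\alpha=0$ and $\alpha\bar\alpha=2$, whence $\{\alpha,\bar\alpha\}=\{\sqrt2\,i,\,-\sqrt2\,i\}$ and $1+2T^2=(1-(\sqrt2\,i)T)(1-(-\sqrt2\,i)T)$, as claimed; this is precisely case $W3$ of Table \ref{tab:g=1} with $p=2$ and $r=1$.

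Finally, for maximality over $\FF_{2^2}$ I would invoke Lemma \ref{baseAV}(1): the Weil numbers of $E_\circ/\FF_{2^2}$ are $\alpha^2=(\sqrt2\,i)^2=-2$ and $\bar\alpha^2=-2$, so the normalized Weil numbers are $-2/\sqrt4=-1$. By Definition \ref{minmaxAV}, $E_\circ$ is maximal over $\FF_{2^2}$. As a consistency check, Corollary \ref{pointscurve} then gives $|E_\circ(\FF_4)|=4+1-(-2-2)=9$, the Hasse--Weil maximum $q+1+2g\sqrt q$.

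There is no serious obstacle in this lemma; the only points requiring care are justifying that $E_\circ$ has genus $1$ and is supersingular, and keeping the normalization $z=\alpha/\sqrt q$ straight so that the $\FF_4$-normalized Weil numbers come out to $-1$ (maximal) rather than $+1$.
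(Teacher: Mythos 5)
Your proof is correct. The paper states this lemma without any proof, treating it as a routine computation, and your argument (genus $1$ via Riemann--Hurwitz for the Artin--Schreier cover, the direct count $\vert E_\circ(\FF_2)\vert=3$ giving $\beta=0$ and $L(E_\circ/\FF_2,T)=1+2T^2$, then squaring the Weil numbers to get normalized Weil numbers $-1$ over $\FF_{2^2}$) is exactly the standard verification one would supply; it is consistent with case $W3$ of Table \ref{tab:g=1} as you note.
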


\begin{lemma} \label{LquotientELpolycube}
\begin{enumerate}
\item If $c_1$ is a cube in $K^*$, then $\NWNs(E_1/K) = \{i^r, (-i)^r\}$.

\item For $j=2,3$, if $c_j$ is a cube in $(K')^*$, then 
$\NWNs(E_j/K') = \{i^{r'}, (-i)^{r'}\}$.
\end{enumerate}
\end{lemma}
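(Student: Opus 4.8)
The plan is to reduce both parts to the base case Lemma~\ref{LbasecaseE} by an explicit change of variables, and then carry out a base-change and normalization computation using Lemma~\ref{baseAV}(1). Recall first that $c_1, c_2, c_3 \in K \subseteq K'$ by \eqref{cidef}, so each $E_j$ is genuinely defined over the relevant field.

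First I would handle part (1). Since $c_1 \in K^*$ is a cube, write $c_1 = e^3$ with $e \in K^*$. The substitution $S' = eS$, i.e.\ the map $(R,S) \mapsto (R, eS)$, is defined over $K$ and transforms the defining equation $R^2 + R = c_1 S^3$ of $E_1$ into $R^2 + R = (S')^3$, because $c_1 S^3 = c_1 (S'/e)^3 = (S')^3$. Thus $E_1 \simeq_K E_\circ$, where $E_\circ : R^2 + R = S^3$ is the curve of Lemma~\ref{LbasecaseE}, and so $E_1$ and $E_\circ$ have the same normalized Weil numbers over $K$.

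Next I would compute these normalized Weil numbers. By Lemma~\ref{LbasecaseE}, $L(E_\circ/\FF_2, T) = 1 + 2T^2$, so the Weil numbers of $E_\circ/\FF_2$ are $\pm \sqrt{2}\, i$ and $\NWNs(E_\circ/\FF_2) = \{i, -i\}$. By Lemma~\ref{baseAV}(1), over $K = \FF_{2^r}$ the Weil numbers of $E_\circ$ are $(\pm \sqrt{2}\, i)^r$; dividing by $\sqrt{q} = (\sqrt{2})^r$ gives $\NWNs(E_\circ/K) = \{i^r, (-i)^r\}$. Combining with $E_1 \simeq_K E_\circ$ yields $\NWNs(E_1/K) = \{i^r, (-i)^r\}$, as claimed.

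For part (2), the identical argument applies verbatim over $K'$ in place of $K$: if $c_j$ is a cube in $(K')^*$ for $j \in \{2,3\}$, say $c_j = e^3$ with $e \in (K')^*$, then the substitution $S' = eS$ gives $E_j \simeq_{K'} E_\circ$, and since $K' = \FF_{2^{r'}}$, base-changing and normalizing exactly as above produces $\NWNs(E_j/K') = \{i^{r'}, (-i)^{r'}\}$. I do not expect a genuine obstacle in this lemma; the only two points requiring care are that the cube-root substitution is rational over the stated field (which holds precisely because $e$ lies in that field, using the cube hypothesis) and that the normalization by $\sqrt{q}$ is carried out with the correct exponent $r$ (resp.\ $r'$) so that $(\pm\sqrt{2}\,i)^r/(\sqrt{2})^r = (\pm i)^r$.
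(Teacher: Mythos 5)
Your proof is correct and follows essentially the same route as the paper's: the cube hypothesis yields a $K$- (resp.\ $K'$-) rational isomorphism $E_j \simeq E_\circ$, after which Lemmas \ref{LbasecaseE} and \ref{baseAV}(1) give the normalized Weil numbers; you merely make the change of variables and the normalization by $\sqrt{q}$ explicit. One small correction to a side remark: by \eqref{cidef}, $c_2 = d/(h+1)^2$ and $c_3 = d/h^2$ lie in $K'$ but not necessarily in $K$ (since $h$ need not lie in $\FF_q$), though this does not affect your argument, which only needs $c_j \in (K')^*$ for part (2).
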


\begin{proof}
If $c_1$ is a cube in $K^*$, then there is an isomorphism $w:E_1 \to E_\circ$ defined over $K$,
so part (1) follows from Lemmas \ref{baseAV} and \ref{LbasecaseE}.  
The proof for part (2) is similar.
\end{proof}

\begin{lemma} \label{LquotientELpolynotcube}
\begin{enumerate}
\item Suppose that $c_1$ is not a cube in $K^*$.
If $r \equiv 2 \bmod 4$, then $\NWNs(E_1/K)$ is $\{\zeta_6, \bar{\zeta}_6\}$ or 
$\{-1,-1\}$.
If $r \equiv 0 \bmod 4$, then $\NWNs(E_1/K)$ is $\{\zeta_3, \bar{\zeta}_3\}$ or $\{1,1\}$.
\item Suppose that $c_j$ is not a cube in $(K')^*$ for $j=2,3$.
If $r' \equiv 2 \bmod 4$, then $\NWNs(E_j/K')$ is $\{\zeta_6, \bar{\zeta}_6\}$ or $\{-1,-1\}$.
If $r' \equiv 0 \bmod 4$, then $\NWNs(E_j/K')$ is $\{\zeta_3, \bar{\zeta}_3\}$ or $\{1,1\}$.
\end{enumerate}
\end{lemma}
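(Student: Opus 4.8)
The plan is to reduce the non-cube case to the cube case already settled in Lemma \ref{LquotientELpolycube}, by passing to a cubic extension over which $c_1$ becomes a cube and then descending via the base-change formula of Lemma \ref{baseAV}. The first thing I would record is that if $c_1 \notin (K^*)^3$, then $3 \mid q-1$, so $q \equiv 1 \bmod 3$; since $q = 2^r$ this forces $r$ to be even and $\zeta_3 \in K$, consistent with the two stated cases $r \equiv 0, 2 \bmod 4$. Throughout, I would write $\delta_0 = i^r$, so that $\delta_0 = 1$ when $r \equiv 0 \bmod 4$ and $\delta_0 = -1$ when $r \equiv 2 \bmod 4$; note that by Lemmas \ref{baseAV} and \ref{LbasecaseE} the curve $E_\circ/K$ then satisfies $\NWNs(E_\circ/K) = \{i^r,(-i)^r\} = \{\delta_0,\delta_0\}$.

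First I would show that $c_1$ becomes a cube over $K_3 = \FF_{q^3}$. By Euler's criterion for cubes in the cyclic group $\FF_{q^3}^*$, it suffices to check $c_1^{(q^3-1)/3} = 1$. Writing $(q^3-1)/3 = (q-1)\cdot (q^2+q+1)/3$, which is an integer because $q \equiv 1 \bmod 3$ makes $q^2+q+1 \equiv 0 \bmod 3$, and using $c_1^{q-1} = 1$ (as $c_1 \in K^*$), gives $c_1^{(q^3-1)/3} = 1$ at once. Hence $c_1 \in (K_3^*)^3$, so I may apply Lemma \ref{LquotientELpolycube}(1) over $K_3$, which has degree $3r$ over $\FF_2$. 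Since $3r - r = 2r \equiv 0 \bmod 4$ for even $r$, we have $3r \equiv r \bmod 4$, whence $\NWNs(E_1/K_3) = \{i^{3r}, (-i)^{3r}\} = \{\delta_0, \delta_0\}$.

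Next I would descend. By Lemma \ref{baseAV}(1), if $\NWNs(E_1/K) = \{w, \bar{w}\}$ then $\NWNs(E_1/K_3) = \{w^3, \bar{w}^3\}$, so $w^3 = \delta_0$. When $\delta_0 = 1$ the cube roots are $\{1, \zeta_3, \bar{\zeta}_3\}$, and the only conjugate-closed pairs among them are $\{1,1\}$ and $\{\zeta_3, \bar{\zeta}_3\}$; when $\delta_0 = -1$ the cube roots are $\{-1, \zeta_6, \bar{\zeta}_6\}$, giving $\{-1,-1\}$ or $\{\zeta_6, \bar{\zeta}_6\}$. This yields exactly the stated dichotomy in each case. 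Part (2), for $E_2$ and $E_3$ over $K'$, is identical after replacing $K$ by $K'$ and $r$ by $r'$: the non-cube hypothesis forces $r'$ even, each $c_j$ becomes a cube over $(K')_3$, and the same cube-root extraction applies using Lemma \ref{LquotientELpolycube}(2).

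The main obstacle is the cube-root ambiguity itself, which is precisely the source of the ``or'' in the statement: the base-change argument cannot, on its own, decide between the two conjugate pairs. One could remove the ambiguity by a direct twist computation: since $E_1$ is the cubic twist of $E_\circ$ by the order-$3$ automorphism $\rho\colon (R,S) \mapsto (R, \zeta_3 S)$ (or its inverse), and $E_\circ/K$ is maximal or minimal with relative Frobenius $\sqrt{q}\,\delta_0$ times the identity, Proposition \ref{frobtwist} would give $\NWNs(E_1/K) = \{\delta_0 \zeta_3, \delta_0\bar{\zeta}_3\}$, i.e.\ always the first of the two options. However, I would not pursue this refinement here, since it is unnecessary: both options share the same $2$-valuation vector $\underline{e}$, namely $\{0\}$ when $r \equiv 0 \bmod 4$ and $\{1\}$ when $r \equiv 2 \bmod 4$, and it is only $\underline{e}$ that the subsequent type analysis culminating in Theorem \ref{mainthm} requires.
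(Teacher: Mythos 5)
Your proof is correct and follows essentially the same route as the paper's: pass to the cubic extension $K_3$ where $c_1$ becomes a cube, apply Lemma \ref{LquotientELpolycube} there, and descend by Lemma \ref{baseAV}, with the ``or'' arising from the conjugate-closed cube roots. You additionally supply details the paper leaves implicit (that every element of $K^*$ is a cube in $K_3^*$, and that the non-cube hypothesis forces $r$ even), and your closing observation that both alternatives yield the same $2$-valuation vector is exactly why the ambiguity is harmless downstream.
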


\begin{proof}
For part (1), if $c_1$ is not a cube in $K^*$, then it is a cube in $K_3^*$, where 
$K_3 \simeq \FF_{q^3}$.
By Lemma \ref{LquotientELpolycube}(1), $\NWNs(E_1/K_3)=\{i^{3r}, (-i)^{3r}\}=\{i^r, (-i)^{r}\}$.  
If $r \equiv 2 \bmod 4$, then $\NWNs(E_1/K_3)=\{-1,-1\}$, 
while if $r \equiv 0 \bmod 4$, then $\NWNs(E_1/K_3)=\{1,1\}$.
By Lemma \ref{baseAV}, $\NWNs(E_1/K)$ are the cube roots of $\NWNs(E_1/K_3)$ and are complex conjugates.
The proof for part (2) is similar.
\end{proof}

Lemmas \ref{otherquotients}(3), \ref{LquotientELpolycube}, and \ref{LquotientELpolynotcube} determine $\underline{e}(X_{c,d}/K')$.
When $h \not \in \FF_q$, this is not quite strong enough 
to prove Theorem \ref{mainthm}, because it only gives information about the
\NWN over $\FF_{q^2}$.  
We now determine more information using the Artin $L$-series $L(E_1/\FF_{q},T,\chi)$, 
where $\chi$ is the nontrivial character of $\ZZ/2\ZZ$.
By Lemma \ref{eulergeneral}(1) (\cite[Chapter 9, page 130]{rosen}),
\begin{equation}\label{rosen} 
L(X_{c,d}/\FF_q, T)=L(E_1/\FF_q, T) L(E_1/\FF_q, T, \chi).
\end{equation}

Let $\rho_1$ be the coefficient of $T$ in $L(E_1/K, T, \chi)$. 
Let $I_1$ (resp.\ $S_1$) be the number of $K$-points of $E_1$ 
that are inert (resp.\ split) in $X_{c,d}$. 
By Lemma \ref{eulergeneral}(2), $\rho_1=S_1-I_1$.
The conditions of Remark \ref{R22twist}(3) are satisfied if ${\rm Tr}_{r}(c)=1$, 
so $\rho_1=0$ by \eqref{Eambiguity}.

%The next lemma is a special case of Remark \ref{R22twist}, but we prove it directly for clarity.
%
%%KEEP $\rho_2=I_1 + \binom{I_1}{2} + S_1 + \binom{S_1}{2} - I_2+S_2 - I_1S_1$.

%\begin{lemma} \label{LN1trace1}
%If ${\rm Tr}_{r}(\frac{a}{b})=1$, then $\rho_1=0$. 
%\end{lemma}
%
%\begin{proof}
%It suffices to work on $E_{1, \epsilon}$ by Lemma \ref{LsimplifyE}.
%By Lemma \ref{Lsplittrace}(1), in the cover $X_{c,d} \to E_{a,b}$, the point $P_\infty$ is ramified, 
%$(0,0)$ is split, and 
%$(1,0)$ is inert because ${\rm Tr}_{r}(\frac{a}{b}) =1$.
%So it suffices to restrict to the set $Y_{1,\epsilon}$ 
%of affine points of degree $1$ on $E_{1, \epsilon}$ with $S \not = 0$.
%The points of $Y_{1,\epsilon}$ can be 
%arranged in pairs $\{P_\epsilon, \upsilon(P_\epsilon)\}$.
%Then 
%\[{\rm Tr}_r(\overline{F}_\epsilon(P_\epsilon)) = {\rm Tr}_r(\frac{a}{b}R_\epsilon) \ {\rm while} \
%{\rm Tr}_r(\overline{F}_\epsilon(\upsilon(P_\epsilon))) = {\rm Tr}_r(\frac{a}{b} R_\epsilon) +1.\]
%By Lemma \ref{Lsplittrace}(2), in $\{P_\epsilon, \upsilon(P_\epsilon)\}$, one point is split and one is inert.
%So $\rho_1=S_1 - I_1=0$.
%\end{proof}

\begin{proposition} \label{Pg3p2vecE}
Let $K = \FF_{q}$ where $q=2^r$.
Let $K'=K(h)$ where $h$ is such that $h^2+h = c$.
The $2$-valuation vector $\underline{e}(X_{c,d}/K) =\{e_1,e_2,e_3\}$ is determined below.
\begin{center}
    \begin{tabular}{| l | l | l | l |}
    \hline
 $\underline{e}$ & $r$ {\rm odd} & $r \equiv 2 \bmod 4$ & $r \equiv 0 \bmod 4$  \\ \hline
if $h \in \FF_{q}$ & $\{2,2,2\}$  & $\{1,1,1\}$ & $\{0,0,0\}$ \\ \hline
if $h \not \in \FF_q$ & $\{2,2,2\}$ & $\{1,0,1\}$ & $\{0,0,1\}$ \\ \hline
\end{tabular}
\end{center}
\end{proposition}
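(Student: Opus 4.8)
The plan is to split the argument according to whether $h \in \FF_q$ (equivalently ${\rm Tr}_r(c)=0$, so $K'=K$) or $h \notin \FF_q$ (equivalently ${\rm Tr}_r(c)=1$, so $K'=K_2$), and in each case to read off the $2$-valuations from the normalized Weil numbers of the elliptic quotients $E_1,E_2,E_3$. When $h \in \FF_q$, all three curves are defined over $K$, so by Lemma \ref{otherquotients}(2) the decomposition ${\rm Jac}(X_{c,d}) \sim_K E_1 \oplus E_2 \oplus E_3$ holds over $K$ and $\underline{e}(X_{c,d}/K)=\{e_1,e_2,e_3\}$ with $e_j={\rm ord}_2(o(z))$ for $z \in \NWNs(E_j/K)$. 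I would apply Lemmas \ref{LquotientELpolycube} and \ref{LquotientELpolynotcube} to each $E_j$: for $r$ odd the normalized Weil numbers have order $4$ (or $12$ in the non-cube case), so $e_j=2$; for $r \equiv 2 \bmod 4$ they have order $2$ or $6$, so $e_j=1$; and for $r \equiv 0 \bmod 4$ they have order $1$ or $3$, so $e_j=0$. In each congruence class the value of $e_j$ is independent of whether $c_j$ is a cube, so the three entries agree and $\underline{e}$ is $\{2,2,2\}$, $\{1,1,1\}$, or $\{0,0,0\}$, matching the top row of the table.

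When $h \notin \FF_q$, only $E_1$ and $\tau$ are defined over $K$; the curves $E_2,E_3$ are $K$-conjugate (Frobenius sends $h \mapsto h+1$, hence interchanges $c_2$ and $c_3$), and over $K$ they assemble into the two-dimensional $\tau$-anti-invariant factor $V$ with ${\rm Jac}(X_{c,d}) \sim_K E_1 \oplus V$. The entry $e_1$ I would compute directly from $E_1/K$ exactly as above, obtaining $2$, $1$, $0$ according to $r$. For the remaining two valuations I would use that base change to $K'=K_2$ squares the normalized Weil numbers (Lemma \ref{baseAV}), so the four numbers $\{z_2,\bar z_2,z_3,\bar z_3\}=\NWNs(V/K)$ satisfy $\{z_2^2,\bar z_2^2,z_3^2,\bar z_3^2\}=\NWNs(E_2/K') \cup \NWNs(E_3/K')$. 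Since $E_2$ and $E_3$ are $K$-conjugate their $L$-polynomials over $K'$ coincide, so this multiset is $\{\omega,\bar\omega,\omega,\bar\omega\}$ for a single $\omega \in \NWNs(E_2/K')$, which I read off from Lemmas \ref{LquotientELpolycube} and \ref{LquotientELpolynotcube} with $r'=2r$. Thus each of $z_2,z_3$ is a square root of $\omega$ or $\bar\omega$, determining its order up to the usual sign choice $z \mapsto -z$.

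The decisive step, and the one I expect to be the main obstacle, is resolving this square-root ambiguity so as to produce the asymmetric multiset $\{0,1\}$ for $\{e_2,e_3\}$ in the $r$-even cases. Here I would invoke $\rho_1=0$, which holds because $\gamma=1$ when ${\rm Tr}_r(c)=1$ (Remark \ref{R22twist}(3), equation \eqref{Eambiguity}). Since $L(E_1/K,T,\chi)=L(V/K,T)$ by Lemma \ref{eulergeneral}(3), the coefficient $\rho_1$ of $T$ equals $-\sqrt{q}(z_2+\bar z_2+z_3+\bar z_3)$, so $\rho_1=0$ forces ${\rm Re}(z_2)+{\rm Re}(z_3)=0$. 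For $r$ even, $r'=2r \equiv 0 \bmod 4$ gives $\omega \in \{1,\zeta_3,\bar\zeta_3\}$; the two square roots of any such $\omega$ are $\pm\sqrt{\omega}$, which have opposite nonzero real parts and, by \eqref{negate}, $e$-values $\{0,1\}$ (namely $\{1,-1\}$ of orders $1,2$ when $\omega=1$, and $\{\zeta_6,\bar\zeta_3\}$ of orders $6,3$ when $\omega=\zeta_3$). The constraint ${\rm Re}(z_2)=-{\rm Re}(z_3)$ then pairs a $+\sqrt{\omega}$ root with a $-\sqrt{\omega}$ root (ruling out $z_2=z_3$), so $\{e_2,e_3\}=\{0,1\}$; combined with $e_1=1$ this yields $\underline{e}=\{1,0,1\}$ for $r \equiv 2 \bmod 4$, and with $e_1=0$ it yields $\underline{e}=\{0,0,1\}$ for $r \equiv 0 \bmod 4$.

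Finally, for $r$ odd and $h \notin \FF_q$, one has $r' \equiv 2 \bmod 4$, so $\omega \in \{-1,\zeta_6,\bar\zeta_6\}$; every square root of these lies in $\{\pm i\}$ or $\{\zeta_{12},\zeta_{12}^7\}$, all of order divisible by $4$, hence $e_2=e_3=2$ irrespective of the sign choice, recovering $\{2,2,2\}$. I would emphasize two features that make the bookkeeping robust: first, the ambiguity left open in Lemma \ref{LquotientELpolynotcube} (whether $\omega$ is $1$ or $\zeta_3$) happily does not affect the final multiset, since both possibilities yield $\{0,1\}$; and second, for $r$ odd the two square roots always share the same $2$-order, so $\rho_1=0$ is consistent but not needed there. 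The genuine content of the proof is therefore concentrated in the $r$-even, $h \notin \FF_q$ cases, where the vanishing of $\rho_1$ is exactly what breaks the symmetry between $z_2$ and $z_3$.
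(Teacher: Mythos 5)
Your proposal is correct and follows essentially the same route as the paper's proof: decompose $\mathrm{Jac}(X_{c,d})$ into $E_1\oplus E_2\oplus E_3$, read off $\NWNs(E_j/K')$ from Lemmas \ref{LquotientELpolycube} and \ref{LquotientELpolynotcube}, descend to $K$ by taking square roots when $h\notin\FF_q$, and resolve the sign ambiguity using $\rho_1=0$ from \eqref{Eambiguity} together with invariance under complex conjugation. Your explicit observation that $E_2$ and $E_3$ are $K$-conjugate (hence share an $L$-polynomial over $K'$) is a nice clarification of a step the paper leaves implicit, but it does not change the argument.
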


\begin{proof}
When $h \in \FF_{q}$, then $K'=K$.
By Lemmas \ref{LquotientELpolycube} and \ref{LquotientELpolynotcube}, 
$\NWNs(X_{c,d}/K)$ are among the values $(\pm i)^r$ if $r$ is odd, 
$\zeta_6, \bar{\zeta}_6, -1$ if $r \equiv 2 \bmod 4$, 
and $\zeta_3, \bar{\zeta}_3, 1$ if $r \equiv 0 \bmod 4$. 
Thus $\underline{e}(X_{c,d}/K)$ equals $\{2\}$ if $r$ is odd, 
$\{1\}$ if $r \equiv 2 \bmod 4$, and $\{0\}$ if $r \equiv 0 \bmod 4$.

Suppose that $h \not\in \FF_{q}$. Then $\NWNs(E_1/K)$ are the same as 
before; in particular, $e_1=2$ if $r$ is odd, $e_1=1$ if $r \equiv 2 \bmod 4$,
and $e_1=0$ if $r \equiv 0 \bmod 4$.
By Lemmas \ref{LquotientELpolycube} and \ref{LquotientELpolynotcube}, 
$\NWNs(E_2/K')$ and $\NWNs(E_3/K')$ are among
$-1$ and $\zeta_6^{\pm 1}$ if $r$ is odd,
and $1$ and $\zeta_3^{\pm 1}$ if $r$ is even.  
Since $K'$ is a quadratic extension of $K$,
$\NWNs(E_2/K)$ and $\NWNs(E_3/K)$ are among the square roots of these.
The ambiguity in taking the square root is resolved by the fact that 
the four sum to zero by \eqref{Eambiguity} and are invariant under complex conjugation.
If $r$ is odd, then $\NWNs(E_2/K) \cup \NWNs(E_3/K)$ is either $\{\pm i, \pm i\}$
or $\{\zeta_{12},\zeta_{12}^5,\zeta_{12}^7,\zeta_{12}^{11}\}$, which both yield 
$\{e_2,e_3\}=\{2,2\}$.
If $r$ is even, then $\NWNs(E_2/K)\cup \NWNs(E_3/K)$ is either $\{1,1,-1,-1\}$ or 
$\{\zeta_6, \zeta_6^{-1}, \zeta_3, \zeta_3^{-1}\}$ which both yield 
$\{e_2,e_3\}=\{0,1\}$.
\end{proof}

\subsection{The automorphism group of $X_{c,d}$ and $K$-Frobenius conjugacy classes}\mbox{}\\

Let $G={\rm Aut}_k(X_{c,d})$.  Recall $\tau$ and $\upsilon$ from \eqref{E123aut}.
Let $S_0 = \langle \tau, \upsilon \rangle \simeq \ZZ/2\ZZ \times \ZZ/2\ZZ$. 

Consider the order $3$ automorphism of $X_{c,d}$, given by $\sigma:(S,Z) \mapsto (\zeta_3^2 S, Z)$.
Note that $\sigma$ is defined over $\FF_q$ if $r$ is even and over $\FF_{q^2}$ if $r$ is odd.
Furthermore, $\sigma$ centralizes $S_0$.

\begin{lemma}\label{genericaut}
If $c \not = 1$, then $G=S_0 \times \langle \sigma \rangle$ is an abelian group of order $12$.
If $c=1$, then $G$ is a semidirect product of the form $S_0 \rtimes H$ 
where $H$ is a cyclic group of order $9$. 
\end{lemma}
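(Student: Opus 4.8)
The plan is to single out one geometric point of $X_{c,d}$ that is intrinsic to the curve, show that every automorphism must fix it, and then linearize the whole problem around that point. Let $P_\infty$ be the unique point of $X_{c,d}$ over $S=\infty$ (the cover $\gamma$ is totally ramified there, so $P_\infty$ is the single point above $\infty$). First I would record the pole orders at $P_\infty$: total ramification of degree $4$ gives $v_{P_\infty}(S)=-4$, and balancing the terms $Z^4$ and $dS^3$ in \eqref{Eartsch} forces $v_{P_\infty}(Z)=-3$. Since $Z$ and $S$ have poles only at $P_\infty$, the Weierstrass semigroup there is $\langle 3,4\rangle$, with $L(3P_\infty)=\langle 1,Z\rangle$ and $L(4P_\infty)=\langle 1,Z,S\rangle$. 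As $\deg(4P_\infty)=2g-2$ and $\ell(4P_\infty)=3=g$, Riemann--Roch gives $K\sim 4P_\infty$; in particular $X_{c,d}$ is non-hyperelliptic (the gap sequence $\{1,2,5\}$ occurs at no point of a hyperelliptic genus-$3$ curve), so $P_\infty$ is a hyperflex of the canonical plane quartic.

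The key step is that $P_\infty$ is the \emph{unique} hyperflex, i.e.\ the unique point $P$ with $4P\sim K$. Indeed $4P\sim K\sim 4P_\infty$ says that $[P-P_\infty]$ is a $4$-torsion point of $\mathrm{Jac}(X_{c,d})(k)$; but $X_{c,d}$ is supersingular (Lemma \ref{otherquotients}, Theorem \ref{propssAV}), so its $p$-divisible group is connected and $\mathrm{Jac}(X_{c,d})(k)[2^n]=0$ for all $n$. Hence $[P-P_\infty]=0$ and $P=P_\infty$. Because any $\psi\in G$ preserves $K$ and the condition $4P\sim K$, it must fix $P_\infty$. I expect this to be the main obstacle: not the argument itself, which is short, but recognizing that supersingularity (through the vanishing of $2$-power torsion) is exactly what makes the distinguished point unique, hence $G$-fixed.

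With $P_\infty$ fixed, every $\psi\in G$ preserves the filtration $L(nP_\infty)$ and the valuation at $P_\infty$, so preserving pole orders yields
\[
\psi^*Z=\alpha Z+\beta,\qquad \psi^*S=aS+bZ+e,\qquad \alpha,a\in k^*,\ \beta,b,e\in k.
\]
I would substitute these into \eqref{Eartsch}, reduce modulo the relation $Z^4=dS^3+(1+c)Z^2+cZ$, and compare coefficients of the monomials $S^iZ^j$ ($0\le j\le 3$), which form a $k[S]$-basis of the coordinate ring. The coefficients of $Z^3,S^2Z,SZ^2,S^2,S$ force $b=e=0$ (so $\psi$ merely scales $S$ and acts affinely on $Z$); the constant term gives $\beta^4+(1+c)\beta^2+c\beta=0$, i.e.\ $\beta\in\{0,1,h,h+1\}$, the translation group $S_0$; and the $S^3,Z^2,Z$ coefficients give
\[
\alpha^4=a^3,\qquad (1+c)(\alpha^4+\alpha^2)=0,\qquad c(\alpha^4+\alpha)=0.
\]
These conditions are also sufficient, since the computation reverses, so $G$ is exactly this solution set.

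Finally I would split on $c$. If $c\neq 1$, then $1+c\neq 0$ forces $\alpha^2(\alpha+1)^2=0$, hence $\alpha=1$ and $a^3=1$; the automorphisms are $(S,Z)\mapsto(aS,Z+\beta)$ with $a\in\mu_3$ and $\beta\in S_0$, giving $12$ commuting maps, namely $G=S_0\times\langle\sigma\rangle$, abelian of order $12$. If $c=1$, the $Z^2$-relation is vacuous, $c(\alpha^4+\alpha)=0$ gives $\alpha\in\mu_3$, and $\alpha^4=a^3$ becomes $a^3=\alpha$, so $a$ ranges over all of $\mu_9$ with $\alpha=a^3$; together with $\beta\in S_0=\FF_4$ this produces $36$ automorphisms. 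Taking $\rho$ with $a$ a primitive ninth root of unity generates a cyclic $H=\langle\rho\rangle$ of order $9$, and the computation $\rho\, t_\beta\, \rho^{-1}=t_{a^3\beta}$ shows conjugation multiplies translations by $a^3\in\FF_4^*$ of order $3$, cyclically permuting the three involutions of $S_0$; thus $G=S_0\rtimes H$ as claimed. I would close by noting that these match the explicit lower bounds coming from $\sigma$ (resp.\ $\rho$), confirming both assertions.
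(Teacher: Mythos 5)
Your proof is correct, and it reaches the statement by a genuinely more self-contained route than the paper. The paper imports the skeleton of the group from the literature: it cites \cite[Theorems 12.7 and 12.11]{Kbook} (Stichtenoth's analysis of function fields of this type) for the facts that $G$ fixes the unique point $P_\infty$ above $S=\infty$ and that $G\simeq S_1\rtimes H$ with $S_1$ the normal Sylow $2$-subgroup of order $4$ and $H$ cyclic of order dividing $9$; the only computation in the paper is the final step deciding whether $\vert H\vert$ is $3$ or $9$, which is essentially your comparison of how a putative $\zeta_9$-action scales the two sides of \eqref{Eartsch}. You prove everything from scratch. Your replacement for the first citation --- that $4P\sim K\sim 4P_\infty$ forces $P=P_\infty$ because the supersingular Jacobian has trivial $2$-power torsion over $k$ --- is a clean observation, and there is no circularity since supersingularity is established in Lemma \ref{otherquotients} using only the explicitly exhibited involutions. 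The linearization via the filtration $L(nP_\infty)$ (using the gap sequence $\{1,2,5\}$, so $\ell(3P_\infty)=2$ and $\ell(4P_\infty)=3$) and the coefficient comparison are routine but correctly executed: the conditions $b=e=0$, $\beta^4+(1+c)\beta^2+c\beta=0$, $\alpha^4=a^3$, $(1+c)(\alpha^4+\alpha^2)=0$, $c(\alpha^4+\alpha)=0$ are exactly what the substitution yields in characteristic $2$, and the case split on $c$ gives the abelian group of order $12$ for $c\neq 1$ and the $36$-element group $S_0\rtimes\ZZ/9\ZZ$ for $c=1$, with the conjugation formula $\rho\, t_\beta\, \rho^{-1}=t_{a^3\beta}$ matching the paper's remark that $\kappa\tau\kappa^{-1}$ is $\upsilon$ or $\upsilon\tau$. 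What the paper's approach buys is brevity; what yours buys is independence from the cited structure theorems and an explicit verification that no automorphisms with $b\neq 0$ or $e\neq 0$ were missed, which the citation otherwise handles.
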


\begin{proof}
The degree $4$ equation \eqref{Eartsch} for $X_{c,d}$ is of the type whose automorphism group is 
studied in  \cite{stich}, see also \cite[Section 12.1]{Kbook}.
By \cite[Theorem~12.11]{Kbook}, $G$ fixes the unique point of $X_{c,d}$ lying above $S=\infty$.
Thus $G \simeq S_1 \rtimes H$ where $S_1$ is the normal Sylow $2$-subgroup of $G$ and
$H$ is a cyclic group of odd order.
By \cite[Theorem~12.7]{Kbook}, $\vert S_1\vert =4$ (so $S_1 = S_0$)
and $\vert H\vert $ divides $9$.  Then $\vert H\vert  = 3$ or $9$ since $\sigma \in G$. 

If $H$ contains an element $\kappa$ of order $9$, then $\kappa(S) = \zeta_9 S$.
Hence, $\kappa$ acts on the right hand side of \eqref{Eartsch} by multiplication by $\zeta_3$.
However, $\kappa$ can only act on the left hand side of \eqref{Eartsch} by multiplication by $\zeta_3$ 
if the monomial $(1+c) x^2$ vanishes.  
Thus, $\kappa$ lifts to an automorphism of $X_{c,d}$ if and only if $c=1$, in which case
$\kappa(Z)=\zeta_3 Z$ and $\kappa : (S,Z) \mapsto (\zeta_9 S, \zeta_3 Z)$.
\end{proof}

If $c=1$ and $\vert H\vert =9$, note that $\kappa^3=\sigma^2$; also
$G$ is non-abelian, since $\kappa \tau \kappa^{-1}(Z)=Z+\zeta_3^{-1}$, so $\kappa \tau \kappa^{-1}$ 
is either $\upsilon$ or $\upsilon \tau$, depending on the choice of 
$h \in \{\zeta_3, \zeta_3^2\}$.
In this case, $\kappa$ permutes the three quotients $E_1, E_2, E_3$ of $X_{c,d}$ by the non-trivial involutions in $S_0$.

Let $Fr=Fr_{K}$ where $K=\FF_q$.
We now determine the $K$-Frobenius conjugacy classes of~$G$. 

\begin{lemma}\label{frobclasses} 
Let $f$ be the number of $K$-Frobenius conjugacy classes in $G$.
\begin{enumerate}
\item Suppose that $c \not = 1$.  Then $G$ is an abelian group of order $12$.

\begin{enumerate}
\item If $r$ is even and $h \in \FF_q$, then $f=12$.
\item If $r$ is even and $h \not \in \FF_q$, then $f=6$.\\
The classes are $\{\mathrm{id},\tau\}$,$\{\upsilon,\upsilon\tau\}$,$\{\sigma,\sigma\tau\}$, $\{\upsilon\sigma, \upsilon\tau\sigma\}$, $\{\sigma^2,\sigma^2\tau\}$,$ \{\upsilon\sigma^2,\upsilon\tau\sigma^2\}$.
\item If $r$ is odd and $h \in \FF_q$, then $f=4$.\\
The classes are
are $\{\mathrm{id},\sigma,\sigma^2\}$, $\{\upsilon,\upsilon\sigma,\upsilon\sigma^2\}$, $\{\tau,\tau\sigma, \tau\sigma^2\}$, and $\{\upsilon\tau, \upsilon\tau\sigma, \upsilon\tau\sigma^2\}$.
\item If $r$ is odd and $h \not \in \FF_q$, then $f=2$.\\
The classes are $\{\mathrm{id},\sigma,\sigma^2,\tau,\tau\sigma,\tau\sigma^2\}$ and $\{\upsilon,\upsilon\sigma,\upsilon\sigma^2,\upsilon\tau,\upsilon\tau\sigma, \upsilon\tau\sigma^2\}$.
\end{enumerate}
\item If $c=1$, then $G$ is a non-abelian group of order $36$ and $h \in \FF_4-\FF_2$. 
\begin{enumerate}
\item If $r$ is even, then $h \in \FF_q$ and $f = 10$.\\
The classes are $\{ \mathrm{id} \}$, $\{\upsilon,\tau,\upsilon\tau \}$, and $\{\kappa^j, \upsilon\kappa^j, \tau\kappa^j, \upsilon\tau\kappa^j\}$ for $j = 1,\ldots,8$.
\item If $r$ is odd, then $h \not \in \FF_q$ and $f = 2$.  
Also, $\upsilon$ is not conjugate to ${\rm id}$.\\
The first class is
$\{\mathrm{id}, \tau, \kappa^1, \ldots, \kappa^8, \upsilon\tau\kappa, \upsilon\kappa^2, \tau\kappa^3, \upsilon\tau\kappa^4,\upsilon\kappa^5, \tau\kappa^6, \upsilon\tau\kappa^7, \upsilon\kappa^8\}$.
\end{enumerate}
\end{enumerate}
\end{lemma}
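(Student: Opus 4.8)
The plan is to compute the Frobenius action $\phi:=\prescript{{Fr_K}}{}{(\cdot)}$ on $G$ and then read off the twisted conjugacy classes directly, since by Definition~\ref{frobcon} two elements $g,g'\in G$ are $K$-Frobenius conjugate precisely when $g'=t^{-1}g\,\prescript{{Fr_K}}{}{t}$ for some $t\in G$. Thus everything reduces to (i) pinning down $\phi$ on the generators $\tau,\upsilon,\sigma$ (and $\kappa$ when $c=1$), and (ii) enumerating $\phi$-twisted classes in the two groups described in Lemma~\ref{genericaut}.

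For step (i), I use that $\prescript{{Fr_K}}{}{g}$ replaces each coefficient in the coordinate description of $g$ by its $q$-th power (Definition~\ref{twistphi}). This gives $\phi(\tau)=\tau$ always (coefficient $1\in\FF_q$); $\phi(\upsilon)=\upsilon$ when $h\in\FF_q$, i.e. ${\rm Tr}_r(c)=0$, and $\phi(\upsilon)=\upsilon\tau$ otherwise, because the Galois conjugate of the root $h$ of $h^2+h=c$ is $h+1$; $\phi(\sigma)=\sigma$ for $r$ even and $\phi(\sigma)=\sigma^2$ for $r$ odd, from $\zeta_3^q=\zeta_3^{2^r}$ and $2^r\equiv 2\bmod 3$; and, when $c=1$, $\phi(\kappa)=\kappa^{\,2^r\bmod 9}$ from $\zeta_9^q$ and $\zeta_3^q$. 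These few facts package all the arithmetic of the fields $K,K'$ and of the roots of unity $\zeta_3,\zeta_9$ that enters the problem.

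For case (1), where $c\neq 1$ and $G$ is abelian of order $12$, twisted conjugation simplifies: $t^{-1}g\,\phi(t)=g\cdot t^{-1}\phi(t)$, and the map $t\mapsto t^{-1}\phi(t)$ is a homomorphism with image a subgroup $N$ and kernel $G^{\phi}$. Hence each $K$-Frobenius class is a coset $gN$, so $f=[G:N]=|G^{\phi}|$. Reading $N$ off from step (i) gives $N=\{\mathrm{id}\}$ in (a), $N=\langle\tau\rangle$ in (b), $N=\langle\sigma\rangle$ in (c), and $N=\langle\tau,\sigma\rangle$ in (d); this yields $f=12,6,4,2$ respectively, and the cosets are exactly the listed classes.

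The main obstacle is case (2), where $c=1$ and $G=S_0\rtimes\langle\kappa\rangle$ is nonabelian of order $36$, so the Lang map is no longer a homomorphism and the classes must be enumerated by hand. Using $\kappa s\kappa^{-1}=\rho(s)$, where $\rho$ is the $3$-cycle on the nontrivial involutions of $S_0$ coming from $\kappa\tau\kappa^{-1}(Z)=Z+\zeta_3^{-1}$, I expand $t^{-1}g\,\phi(t)$ for $t=s\kappa^i$ and track two quantities: the $\kappa$-exponent, which shifts by $(2^r-1)i\bmod 9$, and the $S_0$-component, which is moved by $\rho^{-i}$ applied to an explicit element of $S_0$. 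Stratifying $G$ by the $\kappa$-exponent modulo the subgroup of $\ZZ/9\ZZ$ generated by $2^r-1$, and recording the induced $\rho$-orbits on $S_0$ in each stratum, produces the class partition; I carry this out once for $r$ even (where $\phi$ fixes $S_0$ and $h\in\FF_q$) and once for $r$ odd (where $\phi(\upsilon)=\upsilon\tau$ and $h\notin\FF_q$), in the latter checking that $\upsilon$ is not twisted-conjugate to $\mathrm{id}$. The delicate part throughout is keeping the noncommuting factors $\kappa^{-i}$ and the $S_0$-part in the correct order and handling the dependence of $\phi(\kappa)$ on $2^r\bmod 9$; reconciling the resulting partition with the stated lists, and so confirming $f=10$ for $r$ even and $f=2$ for $r$ odd, is precisely where the bookkeeping must be done most carefully.
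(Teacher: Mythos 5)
Your strategy coincides with the paper's: its proof likewise records only the Frobenius action on generators ($\prescript{{Fr_K}}{}{\tau}=\tau$; $\prescript{{Fr_K}}{}{\upsilon}$ equal to $\upsilon$ or $\upsilon\tau$ according as $h\in\FF_q$ or not; $\prescript{{Fr_K}}{}{\sigma}=\sigma^{\pm 1}$ according to the parity of $r$; $\prescript{{Fr_K}}{}{\kappa}=\kappa^q$) and then explicitly ``omits most of the long calculation.'' Your treatment of case (1) is complete and in fact tighter than the paper's sketch: for abelian $G$ the twisted classes are exactly the cosets of $N=\{t^{-1}\,\prescript{{Fr_K}}{}{t}\}$, and your identifications $N=\{\mathrm{id}\}$, $\langle\tau\rangle$, $\langle\sigma\rangle$, $\langle\tau,\sigma\rangle$ in (a)--(d) are correct and reproduce the listed classes and the counts $12,6,4,2$.

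The gap is in case (2), which you reduce to ``bookkeeping'' without carrying it out --- and that bookkeeping will not reconcile with the stated lists. Your own setup shows that the $\kappa$-exponent inside the class of $u\kappa^j$ moves by multiples of $q-1\bmod 9$; in particular the class of $\mathrm{id}$ contains $\kappa^{-i}\,\prescript{{Fr_K}}{}{(\kappa^i)}=\kappa^{(q-1)i}$, i.e.\ the subgroup $\langle\kappa^{q-1}\rangle$ of $\langle\kappa\rangle$. For $r$ even one has $q\equiv 4,7,1\bmod 9$, so this subgroup is $\{\mathrm{id},\kappa^3,\kappa^6\}$ unless $6\mid r$, in which case it is trivial. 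Concretely, over $\FF_4$ one gets $\kappa^{-1}\,\prescript{{Fr_K}}{}{\kappa}=\kappa^3=\sigma^2$, so $\sigma^2$ is $K$-Frobenius conjugate to $\mathrm{id}$, which is incompatible with the listed singleton class $\{\mathrm{id}\}$ and with $\kappa^3$ lying in a four-element class. Carrying your stratification through gives $6$ classes when $r\equiv 2,4\bmod 6$ and $12$ classes when $6\mid r$ (there the twisted classes are the ordinary ones, and $\kappa^3,\kappa^6$ are central, each a singleton), never $10$; the paper's own justification of (2a) --- that all automorphisms are defined over $K$ --- also fails when $6\nmid r$, since $\zeta_9\notin\FF_{2^r}$ then. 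So the reconciliation you defer is not a matter of care: either the normalization of $\kappa$ in the statement must be revisited or the class list in (2a) corrected. (Case (2b) is consistent with your framework: for $r$ odd, $q-1$ is prime to $9$, so all of $\langle\kappa\rangle$ collapses into the class of $\mathrm{id}$.)
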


\begin{proof}
We omit most of the long calculation. 
Cases (1a) and (2a) follow from the fact that
$K$-Frobenius conjugacy classes coincide with standard conjugacy classes
when all automorphisms are defined over $K$.  

For the other cases, note that $^{Fr} \tau = \tau$.
If $h \in \FF_q$, then $^{Fr} \upsilon = \upsilon$.
If $h \not \in \FF_q$, then $h^q=h+1$ and $^{Fr}\upsilon = \upsilon \tau$;
in this case, $ \upsilon^{-1}\tau(^{Fr}\upsilon) = {\rm id}$, showing that $\tau$ is $K$-Frobenius conjugate to ${\rm id}$, 
and $\upsilon$ is $K$-Frobenius conjugate to $\upsilon\tau$.

Also, $^{Fr}\kappa = \kappa^{q}$.
If $r$ is even, then $^{Fr} \sigma = \sigma$. 
If $r$ is odd, then $^{Fr} \sigma = \sigma^{-1}$; 
in this case, $ \sigma^{-1}{\rm id}(^{Fr} \sigma) = \sigma$, showing that $\sigma$ is $K$-Frobenius conjugate
to ${\rm id}$.
\end{proof}

\newpage
\subsection{Proof of Theorem \ref{mainthm}} \label{SVRfinal}

\begin{proof}[Proof of Theorem \ref{mainthm}]
The results from Remark \ref{R22twist} apply here, by setting $S = S_0$. 
By Lemma \ref{otherquotients}(2), ${\rm Jac}(X_{c,d}) \sim_{K'} E_1 \oplus E_2 \oplus E_3$. 
By Lemma \ref{otherquotients}(1) and Remark \ref{R22twist}(1), over $K'$, the automorphism $\tau$ acts trivially on $E_1$ and by $[-1]$ on $E_2$ and $E_3$; similarly, $\upsilon$ fixes $E_2$ and acts by $[-1]$ on $E_1$ and $E_3$, 
and $\upsilon\tau$ fixes $E_3$ and acts by $[-1]$ on $E_1$ and $E_2$.

When $h \not\in \FF_q$, the strategy in the proof below
is to analyze the situation for the base change to $K'$, where the automorphism 
$g$ acts via $g^{Fr_K}g$.  
The ambiguity caused by descending to $K$ can be resolved using \eqref{Eambiguity}.

In each case below, the information on $\NWNs(X_{c,d}/K)$ for $K = \FF_{q}$ and their $2$-adic valuations $\underline{e}= \underline{e}(X_{c,d}/K)=\{e_1,e_2,e_3\}$ is from Proposition \ref{Pg3p2vecE}. 
The data on the number and representatives of the $K$-twists of $X_{c,d}$ are found in Lemma \ref{frobclasses}.

\begin{enumerate}
\item Let $r$ be odd.  Then $\underline{e} = \{2,2,2\}$ so $X_{c,d}$ has parity $+1$. 
\begin{enumerate}
\item 
If $h \in \FF_{q}$, then there are three nontrivial twists, each of order $2$.
By Lemma \ref{Levenparitychange}, none of these change the parity, 
so $X_{c,d}$ is fully maximal.
\item If $h \not\in \FF_{q}$, then $K'=\FF_{q^2}$. The nontrivial $K$-twist is represented by $\upsilon$ (which is not defined over $\FF_q$). 
Then $\underline{e}(X_{c,d}/K')=\{1,1,1\}$. 
Over $K'$, the twist for $\upsilon$ corresponds to $\upsilon^{Fr_K}\upsilon = \tau$, 
which negates the two conjugate pairs of \NWN for $E_2$ and $E_3$,
thus the twist has $\underline{e}(X'_{c,d}/K')=\{1,0,0\}$.
By \eqref{Eambiguity}, 
$\underline{e}(X'_{c,d}/K)=\{2,0,1\}$, of parity $-1$.
Thus, $X_{c,d}$ is mixed.
\end{enumerate}
In addition, $\mathrm{Jac}(X_{c,d})$ and $X_{c,d}$ have the same type, by Lemma \ref{xjac}.
\item Let $r \equiv 2 \bmod 4$.
\begin{enumerate}
\item If $h \in \FF_{q}$, then $\underline{e} = \{1,1,1\}$, so $X_{c,d}$ has parity $+1$. There are either twelve $K$-twists (if $c \neq 1$) or ten $K$-twists (if $c=1$). 
In both cases, the $K$-twist by $\upsilon$ has $\underline{e} = \{0,1,0\}$ and parity $-1$. Hence, both $X_{c,d}$ and $\mathrm{Jac}(X_{c,d})$ are mixed.
\item If $h \not\in \FF_{q}$, then $\underline{e} = \{1,0,1\}$, so $X_{c,d}$ has parity $-1$. Also, $\underline{e}(X_{c,d}/K') = \{0,0,0\}$.
Since $c \neq 1$, there are six $K$-twists, represented by $\mathrm{id}$, $\upsilon$, $\sigma$, $\upsilon\sigma$, $\sigma^2$, and $\upsilon\sigma^2$. 
Twisting by $\mathrm{id}, \sigma, \sigma^2$ does not change the parity by Lemma \ref{Loddnotparity} 
since these automorphisms have odd order and are defined over $K$. 
The twist of $X_{c,d}/K$ by $\upsilon$ (resp.\ $\upsilon \sigma$, $\upsilon \sigma^2$)
corresponds to the twist of $X_{c,d}/K'$ by $\tau$ (resp. $\tau \sigma^2$, $\tau \sigma$), 
which changes $\underline{e}(X_{c,d}/K')$ to $\{0,1,1\}$.  
So the $K$-twist for $\upsilon$ (resp.\ $\upsilon \sigma$, $\upsilon \sigma^2$) has $\underline{e}(X_{c,d}/K)$
either $\{1,2,2\}$ or $\{0,2,2\}$, which both have parity $-1$.
Thus $X_{c,d}$ is fully minimal over $K$.
The twist by $[-1]$ has $\underline{e}=\{0,1,0\}$, thus $\mathrm{Jac}(X_{c,d})$ is fully minimal as well.
\end{enumerate} 

\item Let $r \equiv 0 \bmod 4$.
\begin{enumerate}
\item If $h\in \FF_{q}$, then $\underline{e} = \{0,0,0\}$, so $X_{c,d}$ has parity $-1$. 
There are either twelve $K$-twists (if $c \not = 1$) or ten $K$-twists (if $c = 1$). 
The nontrivial elements of $S_0$ yield twists such that $\underline{e}=\{1,1,0\}$, of parity $-1$, cf.\ Remark \ref{R22twist}(1).
The odd order automorphisms $\sigma^j$ do not change the parity by Lemma \ref{Loddnotparity}.
If $c \not = 1$, then all automorphisms are defined over $K$ and the group is abelian, so no other twist changes the parity either.
If $c=1$, then the twists by $\kappa^j$ permute $E_1$, $E_2$, $E_3$ and thus do not change the parity either.
So $X_{c,d}$ is fully minimal.  Since $\mathrm{Jac}(X_{c,d})$ has a twist with $\underline{e}=\{1,1,1\}$ and parity $+1$, it is mixed.

\item  If $h \not\in \FF_{q}$, then $\underline{e} = \{0,0,1\}$, so $X_{c,d}$ has parity $-1$. 
The proof that both $X_{c,d}$ and $\mathrm{Jac}(X_{c,d})$ are fully minimal is very similar to case (2b).
\end{enumerate}
\end{enumerate}
\end{proof}

%\bibliographystyle{amsplain}
%\bibliography{supersingular}

\begin{thebibliography}{10}

\bibitem{achterhowe}
Jeffrey~D. Achter and Everett~W. Howe, \emph{Split abelian surfaces over finite
  fields and reductions of genus-2 curves}, Algebra Number Theory \textbf{11}
  (2017), no.~1, pp.\ 39--76.

\bibitem{Bwin}
Irene Bouw, Wei Ho, Beth Malmskog, Renate Scheidler, Padmavathi Srinivasan, and
  Christelle Vincent, \emph{Zeta functions of a class of {A}rtin-{S}chreier
  curves with many automorphisms}, Directions in number theory, Assoc. Women
  Math. Ser., vol.~3, Springer, [Cham], 2016, pp.~87--124. \MR{3596578}

\bibitem{cardona}
Gabriel Cardona, \emph{On the number of curves of genus 2 over a finite field},
  Finite Fields Appl. \textbf{9} (2003), no.~4, pp.\ 505--526.

\bibitem{CN}
Gabriel Cardona and Enric Nart, \emph{Zeta function and cryptographic exponent
  of supersingular curves of genus 2}, Pairing-based cryptography---{P}airing
  2007, Lecture Notes in Comput. Sci., vol. 4575, Springer, Berlin, 2007,
  pp.~132--151.

\bibitem{CHdescent}
Jean-Marc Couveignes and Emmanuel Hallouin, \emph{Global descent obstructions
  for varieties}, Algebra Number Theory \textbf{5} (2011), no.~4, pp.\
  431--463.

\bibitem{deligne}
Pierre Deligne, \emph{La conjecture de {W}eil. {I}}, Inst. Hautes \'Etudes Sci.
  Publ. Math. (1974), no.~43, pp.\ 273--307.

\bibitem{glasspries}
Darren Glass and Rachel Pries, \emph{Hyperelliptic curves with prescribed
  {$p$}-torsion}, Manuscripta Math. \textbf{117} (2005), no.~3, pp.\ 299--317.

\bibitem{GGR}
Josep Gonz\'alez, Jordi Gu\`ardia, and Victor Rotger, \emph{Abelian surfaces of
  {${\rm GL}_2$}-type as {J}acobians of curves}, Acta Arith. \textbf{116}
  (2005), no.~3, pp.\ 263--287.

\bibitem{HashI}
Ki-ichiro Hashimoto and Tomoyoshi Ibukiyama, \emph{On class numbers of positive
  definite binary quaternion {H}ermitian forms. {II}}, J. Fac. Sci. Univ. Tokyo
  Sect. IA Math. \textbf{28} (1981), no.~3, pp.\ 695--699.

\bibitem{Kbook}
J.~W.~P. Hirschfeld, G.~Korchm\'aros, and F.~Torres, \emph{Algebraic curves
  over a finite field}, Princeton Series in Applied Mathematics, Princeton
  University Press, Princeton, NJ, 2008. \MR{2386879}

\bibitem{honda}
Taira Honda, \emph{Isogeny classes of abelian varieties over finite fields}, J.
  Math. Soc. Japan \textbf{20} (1968), pp.\ 83--95.

\bibitem{HLP}
Everett~W. Howe, Franck Lepr\'evost, and Bjorn Poonen, \emph{Large torsion
  subgroups of split {J}acobians of curves of genus two or three}, Forum Math.
  \textbf{12} (2000), no.~3, pp.\ 315--364.

\bibitem{HMNR}
Everett~W. Howe, Daniel Maisner, Enric Nart, and Christophe Ritzenthaler,
  \emph{Principally polarizable isogeny classes of abelian surfaces over finite
  fields}, Math. Res. Lett. \textbf{15} (2008), no.~1, pp.\ 121--127.

\bibitem{ibu3}
Tomoyoshi Ibukiyama, \emph{On rational points of curves of genus {$3$} over
  finite fields}, Tohoku Math. J. (2) \textbf{45} (1993), no.~3, pp.\ 311--329.

\bibitem{IbKat}
Tomoyoshi Ibukiyama and Toshiyuki Katsura, \emph{On the field of definition of
  superspecial polarized abelian varieties and type numbers}, Compositio Math.
  \textbf{91} (1994), no.~1, pp.\ 37--46.

\bibitem{IKO}
Tomoyoshi Ibukiyama, Toshiyuki Katsura, and Frans Oort, \emph{Supersingular
  curves of genus two and class numbers}, Compositio Math. \textbf{57} (1986),
  no.~2, pp.\ 127--152.

\bibitem{Igusa}
Jun-ichi Igusa, \emph{Class number of a definite quaternion with prime
  discriminant}, Proc. Nat. Acad. Sci. U.S.A. \textbf{44} (1958), pp.\
  312--314.

\bibitem{kanirosen}
Ernst Kani and Michael Rosen, \emph{Idempotent relations and factors of {J}acobians},
  Math. Ann. \textbf{284} (1989), no.~2, pp.\ 307--327.

\bibitem{KO}
Toshiyuki Katsura and Frans Oort, \emph{Families of supersingular abelian
  surfaces}, Compositio Math. \textbf{62} (1987), no.~2, pp.\ 107--167.

\bibitem{kob}
Neal Koblitz, \emph{{$p$}--adic variation of the zeta-function over families of
  varieties defined over finite fields}, Compositio Math. \textbf{31} (1975),
  no.~2, pp.\ 119--218.

\bibitem{Lang}
Serge Lang, \emph{Abelian varieties}, Interscience Tracts in Pure and Applied
  Mathematics. No. 7, Interscience Publishers, Inc., New York; Interscience
  Publishers Ltd., London, 1959.

\bibitem{lauter}
Kristin Lauter, \emph{Geometric methods for improving the upper bounds on the
  number of rational points on algebraic curves over finite fields}, J.
  Algebraic Geom. \textbf{10} (2001), no.~1, pp.\ 19--36, with an appendix in
  French by J.-P. Serre.

\bibitem{lioort}
Ke-Zheng Li and Frans Oort, \emph{Moduli of supersingular abelian varieties},
  Lecture Notes in Mathematics, vol. 1680, Springer-Verlag, Berlin, 1998.

\bibitem{maisnart}
Daniel Maisner and Enric Nart, \emph{Abelian surfaces over finite fields as
  {J}acobians}, Experiment. Math. \textbf{11} (2002), no.~3, pp.\ 321--337,
  with an appendix by Everett W. Howe.

\bibitem{manin2}
Juri I. Manin, \emph{Theory of commutative formal groups over fields of finite
  characteristic}, Uspehi Mat. Nauk \textbf{18} (1963), no.~6 (114), pp.\
  3--90.

\bibitem{meatop}
Stephen Meagher and Jaap Top, \emph{Twists of genus three curves over finite
  fields}, Finite Fields Appl. \textbf{16} (2010), no.~5, pp.\ 347--368.

\bibitem{milneAV}
James~S. Milne, \emph{Abelian varieties (v2.00)}, 2008, available at
  www.jmilne.org/math/.

\bibitem{milneJac}
\bysame, \emph{Jacobian varieties}, 2012, available at
  http://www.jmilne.org/math/.

\bibitem{oortsub}
Frans Oort, \emph{Subvarieties of moduli spaces}, Invent. Math. \textbf{24}
  (1974), pp.\ 95--119.

\bibitem{oortNP}
\bysame, \emph{Abelian varieties over finite fields}, Higher-dimensional
  geometry over finite fields, NATO Sci. Peace Secur. Ser. D Inf. Commun.
  Secur., vol.~16, IOS, Amsterdam, 2008, pp.~123--188.

\bibitem{rosen}
Michael Rosen, \emph{Number theory in function fields}, Graduate Texts in
  Mathematics, vol. 210, Springer-Verlag, New York, 2002.

\bibitem{schoof}
Ren{\'e} Schoof, \emph{Nonsingular plane cubic curves over finite fields}, J.
  Combin. Theory Ser. A \textbf{46} (1987), no.~2, pp.\ 183--211.

\bibitem{serreloc}
Jean-Pierre Serre, \emph{Local fields}, Graduate Texts in Mathematics, vol.~67,
  Springer-Verlag, New York-Berlin, 1979, translated from the French by Marvin
  Jay Greenberg.

\bibitem{serregal}
\bysame, \emph{Galois cohomology}, Springer-Verlag, Berlin, 1997, translated
  from the French by Patrick Ion and revised by the author.

\bibitem{aec}
Joseph~H. Silverman, \emph{The arithmetic of elliptic curves}, second ed.,
  Graduate Texts in Mathematics, vol. 106, Springer, Dordrecht, 2009.

\bibitem{stich}
Henning Stichtenoth, \emph{\"{U}ber die {A}utomorphismengruppe eines
  algebraischen {F}unktionenk\"orpers von {P}rimzahlcharakteristik. {II}. {E}in
  spezieller {T}yp von {F}unktionenk\"orpern}, Arch. Math. (Basel) \textbf{24}
  (1973), pp.\ 615--631.

\bibitem{stichtenothII}
\bysame, \emph{Algebraic function fields and codes}, Universitext,
  Springer-Verlag, Berlin, 1993.

\bibitem{stichxing}
Henning Stichtenoth and Chao~Ping Xing, \emph{On the structure of the divisor
  class group of a class of curves over finite fields}, Arch. Math. (Basel)
  \textbf{65} (1995), no.~2, pp.\ 141--150.

\bibitem{tate}
John Tate, \emph{Endomorphisms of abelian varieties over finite fields},
  Invent. Math. \textbf{2} (1966), pp.\ 134--144.

\bibitem{tate71}
\bysame, \emph{Classes d'isog\'enie des vari\'et\'es ab\'eliennes sur un corps
  fini (d'apr\`es {T}. {H}onda)}, S\'eminaire {B}ourbaki. {V}ol. 1968/69:
  {E}xpos\'es 347--363, Lecture Notes in Math., vol. 175, Springer, Berlin,
  1971, pp.~95--110.

\bibitem{VdGVdV92}
Gerard van~der Geer and Marcel van~der Vlugt, \emph{Supersingular curves of
  genus {$2$} over finite fields of characteristic {$2$}}, Math. Nachr.
  \textbf{159} (1992), pp.\ 73--81.

\bibitem{VdGVdV}
\bysame, \emph{On the existence of supersingular curves of given genus}, J.
  Reine Angew. Math. \textbf{458} (1995), pp.\ 53--61.

\bibitem{vianarodriguez}
Paulo~H. Viana and Jaime E.~A. Rodriguez, \emph{Eventually minimal curves},
  Bull. Braz. Math. Soc. (N.S.) \textbf{36} (2005), no.~1, pp.\ 39--58.

\bibitem{water}
William~C. Waterhouse, \emph{Abelian varieties over finite fields}, Ann. Sci.
  \'Ecole Norm. Sup. (4) \textbf{2} (1969), pp.\ 521--560.

\bibitem{weil}
Andr{\'e} Weil, \emph{Sur les courbes alg\'ebriques et les vari\'et\'es qui
  s'en d\'eduisent}, Actualit\'es Sci. Ind., no. 1041, Hermann et Cie., Paris,
  1948.

\bibitem{weil2}
\bysame, \emph{Vari\'et\'es ab\'eliennes et courbes alg\'ebriques},
  Actualit\'es Sci. Ind., no. 1064, Hermann \& Cie., Paris, 1948.

\bibitem{XYY}
Jiangwei Xue, Tse-Chung Yang, and Chia-Fu Yu, \emph{On superspecial abelian
  surfaces over finite fields}, Doc. Math. \textbf{21} (2016), 1607--1643.
  \MR{3603930}

\end{thebibliography}

\def\cprime{$'$}
\providecommand{\bysame}{\leavevmode\hbox to3em{\hrulefill}\thinspace}
\providecommand{\MR}{\relax\ifhmode\unskip\space\fi MR }
% \MRhref is called by the amsart/book/proc definition of \MR.
\providecommand{\MRhref}[2]{%
  \href{http://www.ams.org/mathscinet-getitem?mr=#1}{#2}
}
\providecommand{\href}[2]{#2}

\end{document}